\def\done{}
\def\bbE{\mathrm{I\!E}}
\def\bbH{\mathrm{I\!H}}
\def\bbK{\mathrm{I\!K}}
\def\bbR{\mathrm{I\!R}}
\def\bbZ{\mathsf{Z\hskip-4.3ptZ}}
\def\rn{\bbR\nh^n}
\def\dx{\hs\dot{\nh x\hs}\nh}
\def\xq{L}
\def\bp{\mathbf{p}}
\def\dbp{\dot{\hskip0pt\bp\hskip0pt}}
\def\hbp{\hat{\mathbf{p}}}
\def\bx{\mathbf{x}}
\def\cat{\mathrm{cat}\hs}
\def\by{\mathbf{y}}
\def\dby{\dot{\hskip0pt\by\hskip0pt}}
\def\hby{\hat{\mathbf{y}}}
\def\ddby{\ddot{\hskip0pt\by\hskip0pt}}
\def\bz{b\hh}
\def\df{d\hskip-.8ptf}
\def\cj{c}
\def\m{m}
\def\p{p}
\def\q{i}
\def\wp{warp\-ed\hh-\hn prod\-uct}
\def\wp{warp\-ed\hh-\hn prod\-uct}
\def\hg{\hat{g\hskip2pt}\hskip-1.3pt}
\def\hyp{\hskip.5pt\vbox
{\hbox{\vrule width2.5ptheight0.5ptdepth0pt}\vskip2pt}\hskip.5pt}
\def\hs{\hskip.7pt}
\def\hh{\hskip.4pt}
\def\nh{\hskip-.7pt}
\def\nnh{\hskip-1.5pt}
\def\hn{\hskip-.4pt}
\def\w{^{\phantom i}}
\def\bM{\hskip3pt\overline{\hskip-3ptM\nh}\hs}
\def\bna{\hs\overline{\nh\nabla\nh}\hs}
\def\bg{\hskip1.2pt\overline{\hskip-1.2ptg\hskip-.3pt}\hskip.3pt}
\def\bM{\hskip3pt\overline{\hskip-3ptM\nh}\hs}
\def\br{\hskip3pt\overline{\hskip-3ptR\nh}\hs}
\def\bt{\kappa}
\def\cz{\gamma}
\def\tz{\delta}
\def\gz{\varepsilon}
\def\vg{\varGamma}
\def\ve{\varepsilon}
\def\h{\eta}
\def\lz{\mu}
\def\hi{\beta}
\def\eu{\chi}
\def\hps{\psi}
\def\vt{{\tau\hskip-4.55pt\iota\hskip.6pt}} 
\def\tv{\tilde{\tau\hskip-4.55pt\iota\hskip2pt}\hskip-1.4pt}
\def\area{\alpha}
\def\bvg{\hskip3pt\overline{\hskip-3pt\vg\nh}\hs}
\def\scal{\mathrm{s}}
\newtheorem{theorem}{Theorem}[section]
\newtheorem{lemma}[theorem]{Lemma}
\newtheorem{corollary}[theorem]{Corollary}
\newtheorem{qstn}[theorem]{Question}
\theoremstyle{definition}
\newtheorem{example}[theorem]{Example}
\theoremstyle{remark}
\newtheorem{remark}[theorem]{Remark}
\numberwithin{equation}{section}
\begin{document}

\title{Har\-mon\-ic-cur\-va\-ture warped products over surfaces}


\author[A. Derdzinski]{Andrzej Derdzinski}
\address{Department of Mathematics\\
The Ohio State University\hskip-1pt\\
231 W\hskip-2pt. 18th Avenue\\
Columbus, OH 43210, USA}
\email{andrzej@math.ohio-state.edu}
\thanks{Both authors' research was supported in part by a 
FAPESP\hn-\hh OSU 2015 Regular Research Award (FAPESP grant: 
2015/50265-6)}

\author[P. Piccione]{Paolo Piccione}
\address{Departamento de Matem\'atica\\
Instituto de Matem\'atica e Estat\'\i stica\\
Universidade de S\~ao Paulo\\
Rua do Mat\~ao 1010, CEP 05508-900\\
S\~ao Paulo, SP, Brazil}
\email{piccione@ime.usp.br}

\subjclass[2010]{Primary 53C25; Secondary 53B20}

\date{April 1, 2020}

\begin{abstract}
For warped products with harmonic curvature, nonconstant warping functions 
$\phi$, and compact two-dimensional bases $(M,h)$, we establish a dichotomy: 
either the Gaussian curvature $K$ of the metric $g=\phi^{-2}h$ is constant and 
negative, or $\phi$ equals a specific elementary function of $K$, also 
depending on the dimension $p$ and Einstein constant $\varepsilon$ of the 
fibre. In both cases the fibre must be an Einstein manifold with $p>1$ and 
$\varepsilon>0$, while the function $f=\phi^{p/2}$ satisfies a Yamabe-type 
second-order differential equation on $(M,g)$. We prove that both
possibilities are realized on every closed orientable surface of genus
greater than $1$, and in the latter case -- which also occurs on the
$2$-sphere and real projective plane -- the metrics in question constitute 
uncountably many distinct homothety types.
\end{abstract}

\maketitle


\setcounter{section}{0}
\section*{Introduction\done}
\setcounter{equation}{0}
One says that a Riemannian manifold with the curvature tensor $\,R\,$ has 
{\it harmonic curvature\/} \cite[Sect.~16.33]{besse} if 
$\,\mathrm{div}\,R=0\,$ or, in local coordinates, 
$\,R_{i\hn j\hh l}\w{}\hs^k{}\nnh_{,\hs k}\w=0$. This condition amounts to the
Co\-daz\-zi equation (\ref{cod}) imposed on the Ric\-ci tensor, and it
implies constancy of the scalar curvature \cite[Sect.~16.4(ii)]{besse}. A
compact Riemannian manifold has harmonic curvature if and only if its 
Le\-vi-Ci\-vi\-ta connection is a critical point of its Yang-Mills functional 
\cite[Sect.~16.34]{besse}.

The known examples of Riemannian manifolds with harmonic curvature include 
five non-disjoint classes, consisting of: {\it Ein\-stein manifolds, 
con\-for\-mal\-ly flat manifolds of constant scalar curvature, locally 
reducible manifolds having\/} $\,\mathrm{div}\,R=0$, {\it certain nontrivial
warped products of dimensions\/} $\,n>4\,$ {\it with Ein\-stein fibres and 
one\hs-di\-men\-sion\-al or hyperbolic bases, and some four-man\-i\-folds that 
are, locally or globally, nontrivial warped products of surfaces}. See
\cite[Sect.~16.34,~16.40]{besse}, \cite[Sect.~4]{derdzinski-88}. Every known
{\it compact\/} example belongs to one of these five classes.

In the construction of the last two classes of the preceding paragraph, 
except for the case of hyperbolic bases, the (local) warp\-ed-\hn prod\-uct 
structure, rather than being an Ansatz, follows from a purely geometric 
assumption, namely, that a certain tensor field $\,B\,$ should have no more 
than two distinct eigen\-val\-ues at each point. Specifically, in the fourth 
(or, fifth) class, $\,B\,$ is the Ric\-ci tensor \cite[Sect.~16.38]{besse} 
or, respectively, the self-dual Weyl tensor acting on self-dual bi\-vec\-tors 
\cite[p.\ 145]{derdzinski-88}. Warped products with harmonic curvature were
studied by Kim, Cho and Hwang \cite{kim-cho-hwang}, and there are interesting 
results on Ein\-stein warped products \cite{he-petersen-wylie,lu-page-pope}.

It is therefore natural to consider the following problem.
\begin{qstn}\label{which}
Which compact warp\-ed-\hn prod\-uct Riemannian manifolds of dimensions 
greater than\/ $\,3\,$ 
have harmonic curvature, without belonging to the first three 
classes italicized above?
\end{qstn}
Question~\ref{which} remains open in general, and its complexity clearly 
increases with the dimension $\,\m\,$ of the base. For $\,\m=1\,$ the answer
is well known \cite[Lemma 1(ii) and Theorem 1]{derdzinski-82}. The present
paper deals with the case $\,\m=2$.

We begin by proving a dichotomy result (Theorem~\ref{srfbs}): if a warped 
product has harmonic curvature, a com\-pact-sur\-face base $\,(M\nh,h)$, and 
a nonconstant warping function $\,\phi$, while $\,K\hs$ denotes the 
Gauss\-i\-an curvature of the con\-for\-mal\-ly-re\-lat\-ed metric 
$\,g=\phi^{-\nh2}h\,$ on $\,M\nh$, then
\begin{equation}\label{fbm}
\begin{array}{l}
\mathrm{the\hs\ fibre\hs\ must\hs\ be\hs\ an\hs\ Ein\-stein\hs\ manifold\hs\
of\hs\ some}\\
\mathrm{dimension\ }\hs\p\hn>\nh1\mathrm{\ with\ an\ Ein\-stein\ constant\ 
}\gz>0
\end{array}
\end{equation}
(for compact bases of all dimensions; see Remark~\ref{parts}), the function
$\,f\nh=\phi^{\,p/2}$ satisfies a Ya\-ma\-be-type sec\-ond-or\-der
differential equation (\ref{ktr}.iii) on $\,(M\nh,g)$, and
\begin{equation}\label{dch}
\begin{array}{l}
\mathrm{either\ }\,\,K\hs\,\mathrm{\ is\ the\ negative\ constant\
}\,-\nnh\gz/\nh(\p\hh-\nh1)\mathrm{,\ or\ }\,\,\phi\,\,\mathrm{\ equals\ a\
positive}\\
\mathrm{constant\ times\
}\,|\hh(\p\hh-\nh1)K\nnh+\gz\hs|^{1/(1\nh-\p)}\nh\mathrm{,\
with\ }\nh\,K\hs\mathrm{\ (necessarily)\ nonconstant.}
\end{array}
\end{equation}
Conversely, these conditions imply harmonic curvature for the warped product.

Theorem~\ref{srfbs} is a the first step toward answering Question~\ref{which} 
for $\,m=2$, and the two cases of (\ref{dch}) amount to two very different
problems.

The first one concerns finding, for $\,\p,\gz\,$ fixed as in (\ref{fbm}),
{\it nonconstant positive solutions\/ $\,f\hs$ to the quasi\-lin\-e\-ar
elliptic equation}
\begin{equation}\label{qle}
\Delta\nh f\nh-a\nh f\nh=-\hn cf\hh^{1+4/\p}\,\hs\mathrm{\ with\ constants\ 
}\,a=\p(\p\hh-\nh2)\gz/[4(\p\hh-\nh1)]\,\mathrm{\ and\ }\,c>0
\end{equation}
{\it on a given closed surface of negative constant
Gauss\-i\-an curvature\/} $\,K\nnh=-\gz/\nh(\p\hh-\nh1)$. (This is equation
(\ref{ktr}.iii), with $\,r=0<c\,$ due to (\ref{css}) --
(\ref{bth}.i).) Ya\-ma\-be \cite{yamabe} has shown -- cf.\ Lemma~\ref{ymabe}
below -- that such $\,f\,$ exist, on any compact Riemannian surface, if the
parameters $\,a,\p\,$ satisfy a specific inequality, which here reads
\begin{equation}\label{pgt}
\p\,>\,2\,-\,\lambda_1\w/K.
\end{equation}
As we point out in Remark~\ref{eighp}, a result of Schoen, Wolpert and Yau
\cite{schoen-wolpert-yau} yields
\begin{equation}\label{tgl}
\lambda_1\w\hs<\,2|K\hn|
\end{equation}
whenever the metric of constant curvature $\,K\nnh<0$, on any closed
orientable surface of genus $\,\mathbf{g}>1$, is confined to a suitable 
nonempty open subset of the Teich\-m\"ul\-ler space; (\ref{tgl}) gives 
(\ref{pgt}) for all $\,\p\ge4$, and the fibre dimensions $\,\p\ge4\,$ are the
only ones of interest for the ``constant $\,K\hs$ case'' of
Question~\ref{which} (see Remark~\ref{cnffl}).

Consequently, {\it the first possibility in\/} (\ref{dch}) {\it is realized,
with warped products of all relevant fibre dimensions, by a 
Teich\-m\"ul\-ler-open nonempty set of metrics of constant curvatures\/}
$\,K\nnh<0$, {\it on closed orientable surfaces of all genera\/}
$\,\mathbf{g}>1$.

In the remaining, second case of (\ref{dch}) we look for {\it metrics\/}
$\,g\,$ {\it on compact surfaces\/} $\,M\,$ {\it having nonconstant
Gauss\-i\-an curvatures\/} $\,K\hs$ {\it such that there exist positive
constants\/} $\,\gz,\lz\in\bbR\,$ {\it with\/} 
$\,(\p\hh-\nh1)K\nnh+\gz\ne0\,$ {\it everywhere in\/} $\,M\,$ {\it and}
\begin{equation}\label{ngc}
\begin{array}{l}
2(\p+\hn1)\hs[(\p\hh-\nh1)K\nnh+\gz]\hh\Delta\hn K\,
-\,(3\p\hh-\nh2)(\p+\hn1)\hs g(\nabla\nnh K\nh,\nnh\nabla\nnh K)]\\
\hskip48pt=\,\lz\hs|(\p\hh-\nh1)K\nnh+\gz\hs|^{2(\p\hh-\nh2)/(\p\hh-\nh1)}\hs
-\,(2K\nnh+\p\hs\gz)\hh[(\p\hh-\nh1)K\nnh+\gz]^2.
\end{array}
\end{equation}
(Equation (\ref{ngc}), that is, (\ref{csb}), requires a normalization of
the warping function, described in Section~\ref{wt}.) Let us emphasize that
the existence of $\,\gz,\lz\in(0,\infty)\,$ for which (\ref{ngc}) holds and 
$\,|(\p\hh-\nh1)K\nnh+\gz\hs|>0\,$ on $\,M\,$ is {\it a property of the
metric\/} $\,g\,$ {\it alone}. Using a bifurcation argument, we prove,
in Section~\ref{cn}, that metrics $\,g\,$ with this property exist for $\,M\,$
dif\-feo\-mor\-phic to $\,S^2\nh,\,\bbR\mathrm{P}^2$ or any closed orientable
surface of genus greater than $\,1$. More precisely, curves of such metrics,
emanating from a given metric $\,\hg\,$ of (nonzero) constant Gauss\-i\-an
curvature $\,\hat K\hs$ on $\,M\nh$, are naturally associated with certain
positive eigen\-val\-ues $\,\lambda\,$ of $\,-\nh\hat\Delta$, for the
La\-plac\-i\-an $\,\hat\Delta\,$ of $\,\hg\nh$. Each of the curves in question,
which we call $\,\lambda${\it-branches}, consists of metrics representing
uncountably many distinct homothety types and, if 
$\,\lambda\nh'\hn\ne\lambda$, a metric from the $\,\lambda$-branch, close to 
$\,\hg\nh$, cannot be homo\-thet\-ic to any metric near $\,\hg\,$ belonging to 
the $\,\lambda\nh'\nh$-branch. Here are some further details.

If $\,\hat K\nnh>0$, the eigen\-val\-ues $\,\lambda>0\,$ that give rise to
$\,\lambda$-branches may be completely arbitrary (on $\,\bbR\mathrm{P}^2$), or
even-num\-ber\-ed and otherwise arbitrary (on $\,S^2$). For $\,\hat K\nnh<0$
(that is, on any closed orientable surface of genus greater than $\,1$)
these $\,\lambda\,$ have to be simple and different from 
$\,(\p\hh-\nh2)\hh|\hat K\hn|$, and so, according to the theorem of Schoen,
Wolpert and Yau \cite{schoen-wolpert-yau} mentioned in Remark~\ref{eighp},
con\-stant-cur\-va\-ture metrics $\,\hg$ admitting such eigen\-val\-ues
$\,\lambda\,$ fill a nonempty open subset of the Teich\-m\"ul\-ler space.

As a result, {\it warped products of all fibre dimensions\/} $\,\p>1\,$
{\it realize the second case of\/} (\ref{dch}) {\it with\/} $\,M\nh=S^2\nh$,
{\it or\/} $\,M\nh=\bbR\mathrm{P}^2\nh$, {\it or\/} $\,M\,$ {\it closed,
orientable and of any genus greater than\/} $\,1$, {\it while -- in the last 
case -- the con\-for\-mal types of the metrics\/} $\,g\hs\,$ {\it form a
nonempty Teich\-m\"ul\-ler-open set}.

Two subcases of the second case of (\ref{dch}) need commenting on. One,
characterized by $\,\p=2$, has already been settled in \cite{derdzinski-88}.
The other, in which $\,M\nh=T^2\nh$, is still an open problem, even though one
can easily provide examples of nontrivial compact warped products with
harmonic curvature and bases dif\-feo\-mor\-phic to $\,T^2$ that are neither
Ein\-stein nor con\-for\-mal\-ly flat: namely, Riemannian products of $\,S^1$
and suitably chosen har\-mon\-ic-cur\-va\-ture warp\-ed-\hn prod\-uct
manifolds having the base $\,S^1\nh$, classified in \cite[Lemma 1(ii) and
Theorem 1]{derdzinski-82}. However, being reducible, such examples do not lie
within the scope of Question~\ref{which}.

\section{Notations and preliminaries\done}\label{pr}
\setcounter{equation}{0}
Manifolds (always assumed connected), mappings and tensor fields, including 
Riemannian metrics and functions, are by definition of class $\,C^\infty\nnh$, 
except in Sections~\ref{nc} --~\ref{cb} where, for technical reasons, we 
require that, rather than being smooth, they should belong to suitable 
$\,L\nh^2$ So\-bo\-lev spaces. Given a Riemannian metric $\,g$, we let 
$\,\nabla$ stand for the Le\-vi-Ci\-vi\-ta connection of $\,g\,$ as well as 
the $\,g$-grad\-i\-ent, and $\,\Delta,\mathrm{Ric},\mathrm{div},K\,$ for the 
$\,g\hh$-\nh La\-plac\-i\-an, the Ric\-ci tensor of $\,g$, the 
$\,g$-di\-ver\-gence and, in the case of a surface metric $\,g$, its 
Gauss\-i\-an curvature. When a metric is denoted by $\,h$, the analogous
symbols will be $\,D\nh,\Delta\nnh^h\nnh,\mathrm{Ric}^h\nnh,\mathrm{div}^h$
and $\,K\hn^h\nh$.

One calls a function $\,\hi$ on Riemannian manifold $\,(M\nh,g)\,$ {\it 
iso\-par\-a\-met\-ric\/} \cite{wang} if $\,\Delta\nh\hi\,$ and 
$\,g(\nabla\nh\hi,\nnh\nabla\nh\hi)\,$ are functions of $\,\hi$. It is
well-known that, when $\,\dim M=2$, the existence of nonconstant
iso\-par\-a\-met\-ric functions amounts (locally, at generic points) to
``\hs rotational symmetry.'' More precisely, for $\,\hi:M\to\bbR\,$ and a
Kil\-ling field $\,v\,$ with $\,d_v\w\hi=0\,$ and $\,\nabla\nh\hi\ne0\ne v\,$
everywhere, $\,\hi\,$ must be iso\-par\-a\-met\-ric since the flow of $\,v\,$
leaves $\,\Delta\hi\,$ and $\,g(\nabla\nh\hi,\nnh\nabla\nh\hi)\,$ invariant
as well. Conversely, on an oriented Riemannian surface $\,(M\nh,g)$,
iso\-par\-a\-met\-ric\-i\-ty of a function $\,\hi$ without critical points
leads to an explicit construction of a Kil\-ling field $\,v\,$ without zeros,
orthogonal to $\,\nabla\nh\hi$. See, e.g., \cite[Lemma 7]{derdzinski-88}, or
formula (\ref{kil}.ii) below.

Here is another well-known fact, cf.\
\cite[Remark 2.5]{derdzinski-piccione-20} or the end of this section:
\begin{equation}\label{ext}
\begin{array}{l}
\mathrm{any\ Kil\-ling\nh\ vector\ field\ defined\ on\ a\ nonempty\nh\
connected\ open}\\
\mathrm{subset\ of\ a\ simply\ connected,\nnh\
real\hh}\hyp\mathrm{an\-a\-lyt\-ic,\nh\nnh\ Riemannian\ mani}\hyp\\
\mathrm{fold\ }\,(M\nh,h)\mathrm{,\ can\ be\ uniquely\ extended\ to\ a\
 Kil\-ling\ field\ on\ }\,M\nh.
\end{array}
\end{equation}
\begin{lemma}\label{glbki}Let a compact real-an\-a\-lyt\-ic Riemannian 
surface\/ $\,(M\nh,h)\,$ have nonconstant Gauss\-i\-an curvature\/ $\,K\hn^h$ 
and nonzero Euler characteristic\/ $\,\eu(M)$. Any\/ $\,h$-Kil\-ling vector 
field\/ $\,v$ defined on a nonempty connected open set\/
$\,\,U\subseteq M\,$ has a unique extension to an\/ $\,h$-Kil\-ling field on\/
$\,M\nh$, provided that, if necessary, one replaces\/ $\,(M\nh,h)$ by a
two\hh-\hn fold isometric covering thereof, and\/ $\,\,U\hn\,$ by a connected
component of the pre-im\-age of\/ $\,\,U\hn\,$ under the covering projection.
\end{lemma}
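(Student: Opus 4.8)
The plan is to reduce the global statement to the simply connected case (\ref{ext}) by passing to the universal Riemannian cover and then descending. Let $\pi:\widetilde M\to M$ be the universal cover, equip it with the pulled-back real-analytic metric $\tilde h=\pi^*h$, and set $\tilde K=K\hn^h\!\circ\pi$. After shrinking $U$ to an evenly covered disk (the original field on all of $U$ is recovered by uniqueness), I would choose a component $U_0$ of $\pi^{-1}(U)$ and extend $\pi^*v|_{U_0}$, via (\ref{ext}), to an $\tilde h$-Kil\-ling field $\tilde v$ on all of $\widetilde M$. Since $v$ is $h$-Kil\-ling it preserves $K\hn^h$, so $d_{\tilde v}\tilde K=0$ on $U_0$ and hence, by real-analyticity, on all of $\widetilde M$.

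The heart of the argument is deck-equivariance of $\tilde v$. For a deck transformation $\gamma$ the field $\gamma^*\tilde v$ is again $\tilde h$-Kil\-ling and, as $\gamma$ preserves $\tilde K$, it too annihilates $\tilde K$. On the nonempty open set $\{\nabla\tilde K\ne0\}$ (nonempty because $K\hn^h$ is nonconstant) the orthogonal complement of $\nabla\tilde K$ is one-dimensional, so $\gamma^*\tilde v=c(\gamma)\,\tilde v$ there; a short computation with the Kil\-ling equation shows that two everywhere-proportional Kil\-ling fields differ by a constant factor, whence $c(\gamma)\in\bbR\setminus\{0\}$ is constant and $\gamma\mapsto c(\gamma)$ is a homomorphism of the deck group $\Gamma$ into the multiplicative group $\bbR\setminus\{0\}$.

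The key step -- and the main obstacle -- is to show that this homomorphism takes values in $\{\pm1\}$. Here the hypotheses $\eu(M)\ne0$ and the nonconstancy of $K\hn^h$ enter: together they force $\tilde v$ to possess a zero. At such a zero the Kil\-ling flow linearizes to a rotation, so the one-parameter group $\Phi_t$ generated by $\tilde v$ fixes a point at which $\Phi_{T_0}$ has identity differential for the rotation period $T_0$; since an isometry of a connected manifold is determined by its $1$-jet at one point, $\Phi$ is periodic with period $T_0$. The relation $\gamma\,\Phi_t\,\gamma^{-1}=\Phi_{c(\gamma)t}$ then gives $\Phi_{c(\gamma)T_0}=\mathrm{id}$, so $c(\gamma)\hs T_0\in T_0\bbZ$, and applying this to $\gamma^{-1}$ yields $c(\gamma)^{-1}\in\bbZ$; hence $c(\gamma)=\pm1$. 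I expect producing the zero of $\tilde v$ to be the delicate point: at an extremum of the nonconstant analytic $\tilde K$ the field $\tilde v$, being tangent to the level sets of $\tilde K$ and preserving them, should be forced to vanish, and the condition $\eu(M)\ne0$ is what rules out the torus and Klein-bottle configurations, in which the level foliation of $K\hn^h$ can fail to have such a singular (rotationally symmetric) point.

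Finally, let $\Gamma_1=\ker c\subseteq\Gamma$, an index-one or index-two subgroup since $c$ takes values in $\{\pm1\}$. By construction $\tilde v$ is $\Gamma_1$-invariant, so it descends to an $h$-Kil\-ling field on $M_1=\widetilde M/\Gamma_1$; when the index is two, $M_1\to M$ is the asserted two\hh-\hn fold isometric covering, and the relevant connected component of the pre-image of $U$ is the image of $U_0$. The index-two case is exactly the one in which $\tilde v$ changes sign under some deck transformation -- typically an orientation-reversing one, so that passing to the orientation double cover removes the obstruction. Uniqueness of the extension is then immediate: any two $h$-Kil\-ling fields agreeing with $v$ on (a component of the pre-image of) $U$ agree on $U_0$ and hence, by the unique-continuation content of (\ref{ext}) applied on $\widetilde M$, everywhere on $M_1$.
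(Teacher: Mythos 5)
Your overall strategy coincides with the paper's: extend the lift of $v$ to the universal cover via (\ref{ext}), show that deck push-forwards of the extension are constant multiples of it, use a zero of the extended field to make its flow periodic, conclude that the multiples are $\pm1$, and descend to $M$ or to a two\hh-\hn fold covering. The genuine gap is precisely the step you yourself flag as delicate: the existence of a zero of $\tilde v$. Your proposed mechanism --- that at an extremum (or critical point) of the nonconstant analytic curvature $\tilde K$ the field $\tilde v$, being tangent to the level sets, ``should be forced to vanish'' --- is false as stated: a Killing field can be nonzero along an entire curve of critical points of the curvature (picture a surface of revolution whose curvature attains its extremum along a closed orbit of the rotation field; nothing in your computation excludes this local configuration, and the hypothesis $\chi(M)\ne0$ never actually enters it). The remark that $\chi(M)\ne0$ ``rules out the torus and Klein-bottle configurations'' is a restatement of what must be proved, not an argument.

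The paper closes this gap with an argument for which you already have every ingredient: if $\tilde v$ had no zeros, then by the deck-proportionality $\gamma^*\tilde v=c(\gamma)\hs\tilde v$ you established, the tangent line field $\bbR\hs\tilde v$ would be deck-invariant and would therefore descend to a nowhere-vanishing tangent-direction field on the closed surface $M$; by the Poincar\'e--Hopf index theorem for line fields this forces $\chi(M)=0$, contradicting the hypothesis. Note that the relevant line field is the one spanned by $\tilde v$ itself, not the level foliation of $K\hn^h$, which is why your curvature-based attempt cannot be repaired directly. With this substitution your proof is complete and becomes essentially the paper's proof: the remaining steps (periodicity of the flow from the linearized rotation at a zero, the relation $c(\gamma)\hs T_0\in T_0\bbZ$ together with $c(\gamma)^{-1}\in\bbZ$ giving $c(\gamma)=\pm1$, and descent to the quotient of the universal cover by $\ker c$) match the paper's reasoning step for step.
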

\begin{proof}Assuming $\,v\,$ to be nontrivial and denoting by
$\,(M\hn'\nnh,h')\,$ the Riemannian universal covering of $\,(M\nh,h)$, we
see that $\,v\,$ gives rise to a $\,h'\nh$-Kil\-ling field $\,v'$ on a
suitable (connected) open sub\-man\-i\-fold $\,\,U'$ of $\,M\nh$, and 
(\ref{ext}) allows us to treat $\,v'$ as defined on all of $\,M'\nh$. Then, 
push-for\-wards of $\,v'$ under deck transformations are constant multiples of
$\,v'$ (or else $\,K\hn^h$ would be constant), and $\,v'$ has zeros (or else it
would span a tangent-direction field on $\,M'\nh$, descending to $\,M\nh$,
even though $\,\eu(M)\ne0$). As the flow of $\,v'$ is periodic due to its
obvious periodicity on a neighborhood of a zero of $\,v'\nh$, the
push-for\-wards of $\,v'$ under deck transformations, having the same flow
period as $\,v'$ itself, must all equal $\,\pm v'\nh$.
\end{proof}
For any function $\,\hi\,$ on a Riemannian manifold $\,(M\nh,g)\,$ one
clearly has
\begin{equation}\label{tnd}
2\hh[\nabla\nh d\hi](v,\,\cdot\,)=dQ\hh,\hskip10pt\mathrm{where}\hskip5pt
v=\nabla\nh\hi\hskip5pt\mathrm{and}\hskip5ptQ
=g(\nabla\nh\hi,\nnh\nabla\nh\hi)\hh.
\end{equation}
Suppose now that we are given a $\,1$-form $\,\xi\,$ and a 
twice\hh-co\-var\-i\-ant symmetric tensor field $\,\bz\,$ on a Riemannian 
manifold $\,(M\nh,h)$. Treating $\,\bz\,$ as a $\,1$-form valued in
$\,1$-forms, we define the exterior product $\,\xi\wedge\bz\,$ and the
exterior derivative $\,d\bz\,$ to be the $\,2$-forms valued in $\,1$-forms
with the lo\-cal-co\-or\-di\-nate expressions 
$\,[\xi\wedge\bz]_{qr\nh s}\w=\xi_q\w\bz_{r\hn s}\w-\xi_r\w\bz_{qs}\w$ 
and $\,[d\bz]_{qr\nh s}\w=\bz_{r\hn s,q}\w-\bz_{qs,r}\w$ or, in 
co\-or\-di\-nate-free notation, 
$\,[\xi\wedge\bz](u,v)=\xi(u)\hs\bz(v,\,\cdot\,)-\xi(v)\hs\bz(u,\,\cdot\,)\,$
and $\,[d\bz](u,v)=(D\hskip-1pt_u\w\bz)(v,\,\cdot\,)
-(D\hskip-1pt_v\w\bz)(u,\,\cdot\,)\,$ for tangent vector fields $\,u,v\,$ and 
the Le\-vi-Ci\-vi\-ta connection $\,D\,$ of $\,h$. Then, cf.\
\cite[Sect.~16.3]{besse},
\begin{equation}\label{cod}
d\bz=0\,\,\mathrm{\ if\ and\ only\ if\ }\,\bz\,\mathrm{\ is\ a\ Co\-daz\-zi\ 
tensor\ on\ }(M\nh,h)
\end{equation}
while, for any functions $\,f,\phi:M\to\bbR$, with $\,\dim M=2\,$ in
(\ref{dfa}.b) -- (\ref{dfa}.c),
\begin{equation}\label{dfa}
\begin{array}{rl}
\mathrm{a)}&d[f\nh\bz]\,\hs=\,f\hn d\bz\,\hh+\,d\hskip-1.4ptf\nnh\wedge\bz\hh,
\hskip14pt\mathrm{b)}\hskip6ptd[Dd\phi]\,=\,-\hn K\hn^h\nh d\phi\wedge h\hh,\\
\mathrm{c)}&K\hn^h\nh d\phi\,\hh=\,\mathrm{div}^h\nh[Dd\phi]\,
-\hs\,d\Delta\nnh^h\nnh\phi.
\end{array}
\end{equation}
Namely, (\ref{dfa}.b) amounts to the Ric\-ci identity
for $\,d\phi\,$ expressed in terms of the Gauss\-i\-an curvature
$\,K\hn^h\nh$, that is, the two\hs-di\-men\-sion\-al case of the general
formula $\,d\nabla\nh\xi=\xi R\,$ (in coordinates: 
$\,\xi_{s,jq}\w\hs-\,\xi_{s,qj}\w\hs=\,R\nh_{qjs}\w{}^p\xi_p\w$), applied here
to $\,\xi=d\phi$, and valid for any $\,1$-form $\,\xi\,$ on a manifold with a
tor\-sion-free connection $\,\nabla$ having the curvature tensor $\,R$. (The
exterior derivative $\,d\bz\,$ of $\,\bz=\nabla\nh\xi\,$ is defined, as above,
by $\,[d\bz]_{qr\nh s}\w\nh=\bz_{r\hn s,q}\w\nh-\hs\bz_{qs,r}\w$, but this
time the twice\hh-co\-var\-i\-ant tensor field $\,\bz\,$ need not be
symmetric.) Contracting (\ref{dfa}.b), one gets the Boch\-ner identity
(\ref{dfa}.c).
\begin{lemma}\label{extpr}
Let\/ $\,J\,$ and\/ $\,\area\,$ be the com\-plex-struc\-ture tensor and the 
area\/ $\,2$-form of an oriented two\hs-di\-men\-sion\-al Riemannian manifold 
$\,(M\nh,h)$, with the convention that\/ $\,\area_{qr}\w=J_q^sh_{sr}\w$ or,
equivalently, $\,\area=h(J\hs\cdot\,,\,\cdot\,)$. Any\/ $\,1$-form\/
$\,\xi\hn$ and twice\hh-co\-var\-i\-ant symmetric tensor field\/ $\hs\bz\hn$
on\/ $\,M\,$ then satisfy the relation
\[
J^*\nnh(\xi\wedge\bz)\,
=\,\area\hs\otimes[(\mathrm{tr}_h\w\bz)\hs\xi-\bz(v,\,\cdot\,)]\hh,\mathrm{\ \ 
for\ }\,v\,\mathrm{\ characterized\ by\ }\,\xi=h(v,\,\cdot\,)\hh,
\]
the lo\-cal-co\-or\-di\-nate version of which reads\/ 
$\,(\xi_q\w\bz_{r\hn s}\w-\xi_r\w\bz_{qs}\w)J_{\nh i}^s
=(\bz_{\!s}^s\xi_i\w-\bz_i^s\xi_s\w)\hs\area_{qr}\w$. 
Two special cases arise when\/ $\,\bz=h\,$ or, respectively, $\,\xi=d\phi\,$ 
and\/ $\,\bz=Dd\phi\,$ with a function\/ $\,\phi:M\to\bbR$. Namely, if\/
$\,D\,$ denotes both the Le\-vi-Ci\-vi\-ta connection of\/ $\,(M\nh,h)\,$ and
the\/ $\,h$-grad\-i\-ent, $\,\Delta\nnh^h$ is the $\,h$-La\-plac\-i\-an, 
and\/ $\,H=h(D\phi,D\phi):M\to\bbR$,
\begin{equation}\label{jxh}
J^*\nnh(\xi\wedge h)\,=\,\area\hs\otimes\hs\xi\hh,\hskip16pt
J^*\nnh(d\phi\wedge\nh Dd\phi)\,
=\,\area\hs\otimes[(\Delta\nnh^h\nnh\phi)\hs d\phi-dH\nnh/2]\hh.
\end{equation}
\end{lemma}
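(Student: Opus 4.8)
The plan is to prove Lemma~\ref{extpr} by first establishing the general identity
\[
J^*\nnh(\xi\wedge\bz)\,
=\,\area\hs\otimes[(\mathrm{tr}_h\w\bz)\hs\xi-\bz(v,\,\cdot\,)]
\]
through a direct computation in local coordinates, and then deriving the two special cases in (\ref{jxh}) by substitution. First I would fix the meaning of the pullback $\,J^*\nh$ acting on the $\,2$-form part of a $\,1$-form-valued $\,2$-form: since $\,\xi\wedge\bz\,$ is valued in $\,1$-forms with components $\,[\xi\wedge\bz]_{qr\nh s}\w=\xi_q\w\bz_{r\hn s}\w-\xi_r\w\bz_{qs}\w\,$ as defined just above the lemma, the operator $\,J^*\,$ contracts the two antisymmetric lower indices $\,q,r\,$ against $\,J$, producing $\,(\xi_q\w\bz_{r\hn s}\w-\xi_r\w\bz_{qs}\w)J_{\nh i}^s\,$ in the co\-or\-di\-nate version stated in the lemma. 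The computation then reduces to a purely pointwise linear-algebra identity on a two\hs-di\-men\-sion\-al inner product space equipped with the complex structure $\,J$.

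The key algebraic step is that in dimension $\,2$ the endomorphism $\,J\,$ satisfies $\,J^2=-\mathrm{id}\,$ and $\,J_q^sh_{sr}\w=\area_{qr}\,$ is the (antisymmetric) area form, so every $\,2$-form is a scalar multiple of $\,\area$. Thus I would contract $\,[\xi\wedge\bz]_{qr\nh s}\,$ with $\,J^s_{\nh i}\,$ and use the standard two\hh-di\-men\-sion\-al identities relating $\,J$, $\,h$, and $\,\area\,$ -- in particular $\,J_q^sJ_r^th_{st}\w=h_{qr}\,$ and the fact that contracting a symmetric tensor $\,\bz\,$ with $\,\area\,$ isolates its trace and its image under $\,J$. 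Concretely, writing $\,v\,$ for the vector with $\,\xi=h(v,\,\cdot\,)$, one finds that the antisymmetrization forces the result to factor as $\,\area_{qr}\,$ times the $\,1$-form $\,\bz_{\!s}^s\xi_i\w-\bz_i^s\xi_s\,$, which is exactly $\,(\mathrm{tr}_h\w\bz)\hs\xi-\bz(v,\,\cdot\,)$. This is essentially a Hodge-star-type calculation and I expect it to be the main (though routine) obstacle, since one must track the index positions carefully and confirm that both the $\,\xi_q\bz_{rs}\,$ and $\,\xi_r\bz_{qs}\,$ terms recombine with the correct signs; the symmetry of $\,\bz\,$ is used precisely to collapse the two contributions into the single expression displayed.

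For the two special cases collected in (\ref{jxh}), the first follows immediately by setting $\,\bz=h$: then $\,\mathrm{tr}_h\w h=2\,$ in dimension $\,2$, while $\,h(v,\,\cdot\,)=\xi\,$ by the definition of $\,v$, so $\,(\mathrm{tr}_h\w h)\hs\xi-h(v,\,\cdot\,)=2\xi-\xi=\xi$, giving $\,J^*\nnh(\xi\wedge h)=\area\hs\otimes\hs\xi$. The second case takes $\,\xi=d\phi\,$ and $\,\bz=Dd\phi$; here $\,\mathrm{tr}_h\w(Dd\phi)=\Delta\nnh^h\nnh\phi\,$ by definition of the La\-plac\-i\-an, and the vector dual to $\,d\phi\,$ is $\,D\phi$, so the bracket becomes $\,(\Delta\nnh^h\nnh\phi)\hs d\phi-[Dd\phi](D\phi,\,\cdot\,)$. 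It remains to identify the last term with $\,dH\nnh/2\,$ where $\,H=h(D\phi,D\phi)$; this is precisely the content of (\ref{tnd}) applied with $\,g=h$, which gives $\,2\hh[Dd\phi](D\phi,\,\cdot\,)=dH$, hence $\,[Dd\phi](D\phi,\,\cdot\,)=dH\nnh/2$. Substituting yields $\,J^*\nnh(d\phi\wedge\nh Dd\phi)=\area\hs\otimes[(\Delta\nnh^h\nnh\phi)\hs d\phi-dH\nnh/2]$, completing the proof.
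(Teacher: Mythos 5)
Your proposal is correct and takes essentially the same approach as the paper's proof: skew-symmetry in $q,r$ together with two-dimensionality forces $\xi_q\bz_{rs}-\xi_r\bz_{qs}=\rho_s\hs\area_{qr}$ for some $1$-form $\rho$, the coefficient is then identified by a contraction (the paper contracts against $h^{rs}$), and the two cases of (\ref{jxh}) follow from (\ref{tnd}) exactly as you describe. One slip of wording only: $J^*$ contracts the \emph{value} index $s$ of $\xi\wedge\bz$ against $J$, not the antisymmetric pair $q,r$, but since the coordinate formula you actually compute with is the correct one, nothing downstream is affected.
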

\begin{proof}Being skew-sym\-me\-tric in $\,q,r$, the expression 
$\,\xi_q\w\bz_{r\hn s}\w -\xi_r\w\bz_{qs}\w$ must equal
$\,\rho_s\w\area_{qr}\w$, for some $\,1$-form $\,\rho$. Contracting this 
equality against $\,h^{r\hn s}$ we obtain 
$\,\bz_{\!s}^s\xi_i\w-\bz_i^s\xi_s\w=\area_{ir}\w w^r\nh
=J_i^sh_{sr}\w w^r\nh=J_i^s\rho_s\w$, with the vector field $\,w\,$ given by 
$\,\rho=h(w,\,\cdot\,)$, which proves our formula for
$\,J^*\nnh(\xi\wedge\bz)$, and (\ref{jxh}) now follows from (\ref{tnd}).
\end{proof}
Given a Riemannian surface $\,(M\nh,g)\,$ and 
$\,\hi,\sigma,\zeta:M\to\bbR\,$ such that $\,\sigma\,$ and $\,\zeta$ are
functions of $\,\hi$, while $\,\Delta\hi=\sigma\,$ and 
$\,g(\nabla\nh\hi,\nnh\nabla\nh\hi)=2\hh\zeta$, one has
\begin{equation}\label{kil}
\begin{array}{rl}
\mathrm{i)}&
2\zeta\hs\nabla\nh d\hi\,
=\,2(\sigma-\zeta')\zeta g\,
+\,(2\zeta'\nh-\sigma)\,d\hi\nh\otimes\nh d\hi\hh,\hskip9pt\mathrm{with\
}\,\zeta'\nh=d\hs\zeta/d\hi\hh,\\
\mathrm{ii)}&
\mathrm{if\ the\ surface\ }\,(M\nh,g)\,\mathrm{\ is\ oriented,\ then\ 
}\,e^\bt\hskip-2.7ptJv\hskip7pt\mathrm{is\ a\
}\,g\hh\hyp\mathrm{Kil\-ling\ field,}
\end{array}
\end{equation}
for $\,v=\nabla\nh\hi\,$ and $\,J\,$ as in Lemma~\ref{extpr}. Here
(\ref{kil}.ii) holds on the open set $\,\,U\,$ on which $\,d\hi\ne0$, the
function $\,\bt\,$ of $\,\hi\,$ being any antiderivative of
$\,(\sigma-2\zeta')/(2\zeta)$, defined away from zeros of $\,\zeta$. Namely,
both sides of (\ref{kil}.i) are symmetric, have the same $\,g$-trace, and
agree, when evaluated on $\,\nabla\nh\hi$, as a consequence of (\ref{tnd}),
which yields (\ref{kil}.i) both on our open set $\,\,U\nnh$, and in the
interior of the zero set of $\,d\hi$, while the union of the two sets is
dense. To obtain (\ref{kil}.ii), note that
$\,2\zeta\hs e^{-\bt}\hh\nabla(e^\bt\hskip-2.7ptJv)=2(\sigma-\zeta')\zeta J\,$
in view of the relations 
$\,2\zeta\hs d\bt\otimes Jv=(\sigma-2\zeta')\,d\hi\otimes Jv$ and
$\,2\zeta\hs J\nabla\nh v
=2(\sigma-\zeta')\zeta J+(2\zeta'\nh-\sigma)\,d\hi\nh\otimes Jv$ due,
respectively, to our choice of $\,\bt$, and to (\ref{kil}.i) rewritten as
$\,2\zeta\hs\nabla\nh v=2(\sigma-\zeta')\zeta
+(2\zeta'\nh-\sigma)\,d\hi\nh\otimes v$, where $\,2(\sigma-\zeta')\zeta$
stands for $\,2(\sigma-\zeta')\zeta\,$ times the identity.
\begin{lemma}\label{nabdk}
For a Riemannian surface\/ $\hs(M\nh,g)\hs$ with the Gauss\-i\-an 
\hbox{curvature\/ $\hs K\nh$,}
\begin{enumerate}
  \def\theenumi{{\rm\roman{enumi}}}
\item whenever\/ $\,\psi,\nu:M\to\bbR\,$ are functions with\/ 
$\,\nabla\nh dK=\psi g+\nu\,dK\nnh\otimes\hh dK\nh$, one necessarily has\/ 
$\,(K\nh-\psi\nu)\,dK\nh+\hs d\psi
=g(\nabla\nnh K\nh,\nnh\nabla\nh\nu)\,dK\nh
-\hs g(\nabla\nnh K\nh,\nnh\nabla\nnh K)\,d\nu$,
\item if functions\/ $\,\varSigma,Z\,$ defined on an interval containing
the range of\/ $\,K\hs$ satisfy the relations\/ $\,\Delta\hn K=\varSigma(K)\,$ 
and\/ $\,g(\nabla\nnh K,\nnh\nabla\nnh K)=2Z(K)$, then
\begin{equation}\label{zpm}
(2Z'\nh-\varSigma)(Z'\nh-\varSigma)
=2(Z''\nh-\hs\varSigma'-\hs K)\hh Z\hh,\hskip12pt\mathrm{where}
\hskip7pt(\hskip2.3pt)'\nh=d/dK\nh.
\end{equation}
\end{enumerate}
\end{lemma}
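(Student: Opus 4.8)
The plan is to obtain the scalar identity (\ref{zpm}) of part (ii) as a direct consequence of the vector identity in part (i), so that essentially all of the work lies in proving (i); once (i) is in hand, part (ii) is a substitution followed by one line of algebra.

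For part (i), the guiding idea is that the hypothesis $\nabla dK=\psi g+\nu\,dK\otimes dK$, read in coordinates as $K_{,ij}=\psi g_{ij}+\nu K_{,i}K_{,j}$, is an algebraic prescription of the Hessian of $K$, and the asserted conclusion is precisely its first integrability condition, extracted by differentiating once and commuting covariant derivatives. First I would differentiate this relation covariantly and substitute it back into itself for the second derivatives of $K$ that reappear, obtaining
\[
K_{,ijk}=\psi_{,k}g_{ij}+\nu_{,k}K_{,i}K_{,j}+\nu\psi(g_{ik}K_{,j}+g_{jk}K_{,i})+2\nu^2K_{,i}K_{,j}K_{,k}.
\]
Next I would antisymmetrize in the last two indices $j,k$ and feed in the Ricci identity $\xi_{s,jq}-\xi_{s,qj}=R_{qjs}{}^p\xi_p$ of Section~\ref{pr} with $\xi=dK$; since $\dim M=2$ gives $R_{qjsp}=K(g_{qs}g_{jp}-g_{qp}g_{js})$, its right-hand side reduces to $K(g_{ki}K_{,j}-g_{ji}K_{,k})$. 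The cubic terms $2\nu^2K_{,i}K_{,j}K_{,k}$ and the symmetric contribution $\nu\psi\,g_{jk}K_{,i}$ cancel under antisymmetrization, leaving
\[
\psi_{,k}g_{ij}-\psi_{,j}g_{ik}+\nu_{,k}K_{,i}K_{,j}-\nu_{,j}K_{,i}K_{,k}+\nu\psi(g_{ik}K_{,j}-g_{ij}K_{,k})=K(g_{ki}K_{,j}-g_{ji}K_{,k}).
\]
Finally I would trace this against $g^{ik}$: using $g^{ik}g_{ik}=2$, $\,g^{ik}K_{,i}K_{,k}=g(\nabla K,\nabla K)$ and $g^{ik}\nu_{,k}K_{,i}=g(\nabla K,\nabla\nu)$, all terms collapse and the outcome is exactly $(K-\psi\nu)\,dK+d\psi=g(\nabla K,\nabla\nu)\,dK-g(\nabla K,\nabla K)\,d\nu$, i.e.\ part (i). The one genuinely delicate point is the choice of this trace: pairing a metric index with a Hessian index (the contraction against $g^{ik}$) is what produces the wanted $1$-form, whereas contracting the antisymmetric pair $j,k$ returns only $0=0$. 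Everything else is index bookkeeping.

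For part (ii), the hypotheses $\Delta K=\varSigma(K)$ and $g(\nabla K,\nabla K)=2Z(K)$ are precisely the iso\-par\-a\-met\-ric setting of (\ref{kil}.i) with $\hi=K$, $\sigma=\varSigma$, $\zeta=Z$, which yields $\nabla dK=(\varSigma-Z')\,g+[(2Z'-\varSigma)/(2Z)]\,dK\otimes dK$. Hence part (i) applies with $\psi=\varSigma-Z'$ and $\nu=(2Z'-\varSigma)/(2Z)$, both functions of $K$. Because $\psi,\nu$ depend on $K$ alone, one has $d\psi=\psi'\,dK$, $d\nu=\nu'\,dK$ and $g(\nabla K,\nabla\nu)=\nu'\,g(\nabla K,\nabla K)=2Z\nu'$, so the whole right-hand side of part (i) equals $(2Z\nu'-2Z\nu')\,dK=0$. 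What remains is the scalar relation $\psi'=\psi\nu-K$; inserting $\psi=\varSigma-Z'$ and $\nu=(2Z'-\varSigma)/(2Z)$ and clearing the factor $2Z$ turns it, after a one-line rearrangement, into (\ref{zpm}).

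The only caveat is that this substitution divides by $Z$, which vanishes at the critical points of $K$. I would therefore carry out the computation on the open set where $\nabla K\ne0$ (equivalently $Z\ne0$), and then note that, once the denominator $2Z$ is cleared, (\ref{zpm}) is a polynomial identity in $\varSigma,Z$ and their $K$-derivatives, so it extends to the full range of $K$ by continuity, exactly as in the density argument following (\ref{kil}).
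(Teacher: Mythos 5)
Your proposal is correct and takes essentially the same route as the paper: for part (i) you carry out by hand (differentiating the Hessian hypothesis, antisymmetrizing via the Ricci identity, and tracing with $g^{ik}$) exactly the commutation argument that the paper packages as the Bochner identity (\ref{dfa}.c) together with (\ref{tnd}). Part (ii) coincides with the paper's proof verbatim in outline: substitute $(\hi,\sigma,\zeta)=(K,\varSigma,Z)$ into (\ref{kil}.i) to get $\psi=\varSigma-Z'$ and $\nu=(Z'-\varSigma/2)/Z$, observe that the right-hand side of (i) vanishes because $\nu$ is a function of $K$, clear the factor $2Z$ to obtain (\ref{zpm}) where $Z\ne0$, and finish with the same density/continuity extension.
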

\begin{proof}In (i), $\,\Delta\hn K=2\psi
+\nu\hs g(\nabla\nnh K,\nnh\nabla\nnh K)$, and our claim immediately follows
from (\ref{dfa}.c) for $\,h=g\,$ and $\,\phi=K\nh$, combined with (\ref{tnd}).
Under the hypotheses of (ii), we may apply (\ref{kil}.i), at points with
$\,d\hi\ne0$, to $\,(\hi,\sigma,\zeta)=(K\nh,\varSigma,Z)$, obtaining the
assumption of (i) for $\,\psi=\varSigma-Z'$ and $\,\nu=(Z'\nh-\varSigma/2)/Z$.
The conclusion of (i) now reads $\,(K\nh-\psi\nu)\,dK\nh+\hs d\psi=0$, since
$\,\nu\,$ is a function of $\,K\nh$, and it easily gives (\ref{zpm}) wherever
$\,Z\ne0$. Thus, (\ref{zpm}) holds on the whole interval in question, due to
a dense\hh-union argument analogous to the one following (\ref{kil}); note
that $\,\varSigma=0\,$ on every open interval on which $\,Z=0$.
\end{proof}
\begin{remark}\label{sccrv}The scalar curvatures
$\,\scal,\,\scal^h$ and La\-plac\-i\-ans
$\,\Delta,\Delta\nnh^h$ of con\-for\-mal\-ly related Riemannian metrics 
$\,g\,$ and $\,h=g/\hn\vt^2$ in dimension $\,\m\,$ are given by
\begin{equation}\label{sca}
\scal^h\nh=\vt^2\scal+2(\m-1)\vt\Delta\vt
-\m(\m-1)g(v,v)\hh,\hskip14pt\Delta\nnh^h\nh
=\vt^2\nh\Delta-(\m-2)\vt\hs d_v\w\hh, 
\end{equation}
where $\,v=\nabla\hskip-1.1pt\vt\,$ is the $\,g$-grad\-i\-ent of $\,\vt$. Cf.\ 
\cite[Theorem 1.159]{besse}. For $\,\m=2$, this becomes 
$\,K\hn^h\nh=\vt^2\nh K\nnh+\vt\Delta\vt-g(v,v)\,$ and 
$\,\Delta\nnh^h\nh=\vt^2\nh\Delta$, with $\,\scal=2K\,$ and 
$\,\scal^h\nh=2K\hn^h$ expressed in terms of the Gauss\-i\-an 
curvatures $\,K,K\hn^h$ of $\,g\,$ and $\,h$.
\end{remark}
\begin{remark}\label{ymuni}Under the assumptions of Remark~\ref{sccrv},
if $\,\vt\,$ assumes its extremum values
$\,\vt\nh_{\mathrm{max}}\w,\,\vt\nh_{\mathrm{min}}\w$, while 
$\,\scal,\,\scal^h$ are both constant and 
$\,\scal<0<\vt$, then $\hs\vt\hs$ is constant and $\,\scal^h<0$. 
This well-known conclusion follows since, by (\ref{sca}), 
$\,\vt^2_{\hskip-.8pt\mathrm{max}}
\le\hs\scal^h\hskip-1.7pt/\scal
\le\hs\vt^2_{\hskip-.8pt\mathrm{min}}$.
\end{remark}
\begin{remark}\label{holom}The existence of a nontrivial $\,h$-Kil\-ling 
vector field $\,v$, for a Riemannian metric $\,h\,$ on a compact surface
$\,M\nh$, precludes negativity of the Gauss\-i\-an curvature $\,K\hs$ of {\it
any\/} metric $\,g\,$ on $\,M\nh$. In fact, passing to a two\hh-\hn fold
covering, if necessary, we may assume $\,M\,$ oriented, which turns $\,h\,$
into a K\"ah\-ler metric on a compact complex curve of some genus
$\,\mathbf{g}$, admitting a nontrivial real-hol\-o\-mor\-phic vector field
$\,v\,$ (so that $\,\mathbf{g}\le1$), while the condition $\,K\nnh<0\,$ would
give $\,\mathbf{g}>1$.
\end{remark}
\begin{remark}\label{cnfpr}A Riemannian product with factors of dimensions
$\,n\,$ and $\,n'$ is con\-for\-mal\-ly flat if and only if both factors have 
constant sectional curvatures $\,K\nh,K'$ and
$\,(n-1)(n'\nh-1)(K\nh+K')=0$. See \cite[Section 5]{yau}, as well as 
\cite[subsection 1.167]{besse}.
\end{remark}
\begin{remark}\label{dlfeq}Let $\,\Delta\nh f\nh=\varOmega(f)\,$ for a 
function $\,f\,$ on a compact Riemannian manifold and a function 
$\,\varOmega\,$ on an interval containing the range of $\,f\nh$. If 
$\,\varOmega\,$ is strictly increasing or constant, then $\,f\,$ must be 
constant, since $\,\varOmega(f\nnh_{\mathrm{max}}\w\nh)\le0
\le\varOmega(f\nnh_{\mathrm{min}}\w\hn)$.
\end{remark}
\begin{remark}\label{eignv}Let $\,\lambda_j\w$ be the $\,j\hh$th
eigen\-val\-ue of $\,-\nh\hat\Delta$, for the La\-plac\-i\-an $\,\hat\Delta\,$
of the $\,2$-sphere (or, projective plane) of constant Gauss\-i\-an curvature 
$\,\hat K\nh$, with
\begin{equation}\label{eig}
\lambda_0\w\,<\hs\lambda_1\w\,<\hs\lambda_2\w\,<\hs\ldots\hs.
\end{equation}
Then $\,\lambda_j\w=j(j+1)\hat K\,$ (or, $\,\lambda_j\w=2j(2j+1)\hat K$). The
spectrum of  $\,-\nh\hat\Delta\,$ acting on rotationally invariant functions
is the same, but with one\hs-di\-men\-sion\-al eigen\-spaces, spanned by the
{\it zonal spherical harmonics\/} \cite[Sect.\ 2.3]{zelditch}.
\end{remark}
\begin{remark}\label{eighp}On every closed orientable surface $\,M\,$ of genus 
greater than $\,1\,$ there exist metrics $\,g\,$ having constant Gauss\-i\-an 
curvature $\,K\nnh<0\,$ and an arbitrarily large number of eigen\-values of
$\,-\nh\Delta\,$ in $\,(|K\hn|/4,t+|K\hn|/4\hh]$, for any 
$\,t\in(0,\infty)$, where $\,\Delta\,$ is the La\-plac\-i\-an; all such
metrics obviously satisfy (\ref{tgl}). See \cite[p. 211, Theorem 8.12]{buser},
\cite[p. 251, Theorem 2]{chavel}. Also, $\,M\,$ then admits a
sequence of metrics $\,g\,$ with $\,K\nh=-\nh1\,$ for which the lowest
positive eigen\-value $\,\lambda_1\w$ of $\,-\nh\Delta\,$ tends to $\,0\,$ and
has multiplicity $\,1$. This follows from a result of Schoen, Wolpert and Yau 
\cite[three final lines of the first paragraph on p.\
279]{schoen-wolpert-yau}, applied to $\,n=1$.
\end{remark}
Finally, (\ref{ext}) follows since one can treat Kil\-ling fields as sections
of a certain vector bundle, parallel relative to a natural connection. This is
why the same conclusion holds, more generally, both for con\-for\-mal vector
fields in the pseu\-\hbox{do\hs-}Riem\-ann\-i\-an case, and for infinitesimal
affine transformations on a manifold with a connection, cf.\
\cite[lines following Lemma 9.1]{derdzinski-11}, 
\cite[text surrounding formula (1.5)]{derdzinski-piccione-17}.

\section{Warped products and harmonic curvature\done}\label{wp}
\setcounter{equation}{0}
Given Riemannian manifolds $\,(M\nh,h),\,(\varPi\nh,\h)\,$ of positive 
dimensions $\,m,\p\,$ and a nonconstant function $\,\phi:M\to(0,\infty)$, 
consider the nontrivial warped product
\begin{equation}\label{nwp}
(M\times\hh\varPi\nh,\,h+\nh\phi^2\h)
\end{equation}
with the base $\,(M\nh,h)$, fibre $\,(\varPi\nh,\h)\,$ and warping function 
$\,\phi$. (The word `nontrivial' refers to nonconstancy of $\,\phi$, and the 
same symbols $\,h,\h,\phi\,$ represent the pull\-backs of $\,h,\h,\phi\,$ to 
the product $\,M\times\hh\varPi$.) The \wp\ metric of (\ref{nwp}) is obviously 
con\-for\-mal to a product metric: 
$\,h+\nh\phi^2\h=\phi^2\hh[\hs g+\h\hs]$, where $\,g=\phi^{-\nh2}\nh h$.
As one easily verifies \cite[Lemma 1.2]{derdzinski-piccione-20}, cf.\ also 
\cite{kim-cho-hwang}, the (nontrivial) warped product (\ref{nwp}) has harmonic 
curvature if and only if there exists a constant $\,\gz\in\bbR\,$ such that
\begin{enumerate}
  \def\theenumi{{\rm\roman{enumi}}}
\item $\mathrm{Ric}^h\nh-\hh\p\hh\phi^{-\nnh1}Dd\phi\,$ is a 
Co\-daz\-zi tensor on $\,(M\nh,h)$,
\item $\mathrm{div}^h(\phi\hs^{\p\hn-\nh2}Dd\phi)
=[(\p\hh-\nh1)\varLambda-\gz\hh]\hs\phi\hs^{\p\hn-\nh4}\hh d\phi$, where 
$\,\varLambda=h(D\phi,D\phi)$,
\item $(\varPi\nh,\h)\,$ is an Ein\-stein manifold with the Ein\-stein 
constant $\,\gz$,
\end{enumerate}
$\mathrm{Ric}^h$ and $\,\mathrm{div}^h$ being the Ric\-ci tensor of $\,h\,$ 
and the $\,h$-di\-ver\-gence, and $\,D\,$ denoting both the Le\-vi-Ci\-vi\-ta 
connection of $\,(M\nh,h)\,$ and the $\,h$-grad\-i\-ent.

Let us point out that, except for notations, (iii) and (i) are precisely 
(a)\hs--\hs(b) in \cite[Lemma 1.2]{derdzinski-piccione-20}, while (ii), with 
$\,\varLambda=h(D\phi,D\phi)$, is equivalent to the condition
\begin{equation}\label{ftd}
\phi^3\mathrm{div}^h(\phi^{-\nnh1}Dd\phi)
=[(\p\hh-\nh1)\varLambda-\gz\hh]\,d\phi+(1\nh-\p)\phi\,d\varLambda/2
\end{equation}
of \cite[Lemma 1.2(c)]{derdzinski-piccione-20}, as one sees differentiating by 
parts, and also to the equality
\begin{equation}\label{rdf}
\phi^2[\mathrm{Ric}^h(D\phi\nh,\,\cdot\,)
+d\hh\Delta\nnh^h\nnh\phi\hs]=[(\p\hh-\nh1)h(D\phi,D\phi)-\gz\hh]\,d\phi
+(1\nh-\p/2)\hs\phi\hskip1ptd\hh[h(D\phi,D\phi)]\hh,
\end{equation}
where $\,\Delta\nnh^h$ denotes the $\,h$-\nh La\-plac\-i\-an. See 
\cite[Lemma 1.2(e)]{derdzinski-piccione-20}.

Using, for instance, the components of the Ric\-ci tensor of 
$\,h+\nh\phi^2\h\,$ evaluated in \cite[the Appendix]{derdzinski-piccione-20},
and noting that, if $\,\varLambda=h(D\phi,D\phi)$,
\begin{equation}\label{fdf}
2\phi^{-\nnh1}\nnh\Delta\nnh^h\nnh\phi\,
+\,(\p\hh-\nh1)\phi^{-\nh2}\nh\varLambda\,
=\,4(p+\hn1)^{-\nnh1}\phi^{-(p+1)/2}\nh\Delta\nnh^h\nh\phi^{(p+1)/2}\hh,
\end{equation}
we express the (necessarily constant) scalar curvature $\,\lz\,$ of
$\,h+\nh\phi^2\h\,$ as follows:
\begin{equation}\label{csc}
\scal^h\hs+\,p\hh[\gz\phi^{-\nh2}\nh
-4(p+\hn1)^{-\nnh1}\phi^{-(\p+1)/2}\nh\Delta\nnh^h\nh\phi^{(\p+1)/2}]\,
=\,\lz\in\bbR\hh.
\end{equation}
Constancy of the scalar curvature $\,\lz\,$ is a general property of every 
metric with harmonic curvature \cite[Sect.~16.4(ii)]{besse}. Here we can also 
derive it from (i) and (\ref{rdf}): any $\,h$-Co\-daz\-zi tensor $\,b\,$ 
obviously has $\,\mathrm{div}^hb=d(\mathrm{tr}_h\w b)\,$ which, in the case of
$\,b=\mathrm{Ric}^h\nh-\hh\p\hh\phi^{-\nnh1}Dd\phi\,$ amounts to 
$\,-\nh2\p\hskip1.2pt\mathrm{div}^h(\phi^{-\nnh1}Dd\phi)
=d\hh[\scal^h\nh-2\p\phi^{-\nnh1}\nnh\Delta\nnh^h\nnh\phi]$, where we
have used the Bian\-chi identity for the Ric\-ci tensor
\cite[Proposition 1.94]{besse}. At the same time, (\ref{ftd}) states that 
$\,-\nh2\p\hskip1.2pt\mathrm{div}^h(\phi^{-\nnh1}Dd\phi)\,$ equals $\,\p\,$ 
times the differential of 
$\,[(\p\hh-\nh1)h(D\phi,D\phi)-\gz\hh]\hs\phi^{-\nh2}\nh$. Subtracting these
two equalities one gets
\begin{equation}\label{cte}
\mathrm{constancy\ of\ }\,\scal^h\nh
-2\p\phi^{-\nnh1}\nnh\Delta\nnh^h\nnh\phi-\p\hh[(\p\hh-\nh1)h(D\phi,D\phi)
-\gz\hh]\hs\phi^{-\nh2},
\end{equation}
that is, by 
(\ref{fdf}) -- (\ref{csc}), of $\,\lz$. Next, for a warped product (\ref{nwp})
with $\,\mathrm{div}\,R=0$,
\begin{equation}\label{ana}
\mathrm{the\ base\ metric\ }\,h\,\mathrm{\ is\ real}\hyp\mathrm{analytic\ in\
suitable\ local\ coordinates.}
\end{equation}
In other words, the $\,C^\infty\nnh$-man\-i\-fold structure of the base
$\,M\,$ contains a unique real-an\-a\-lyt\-ic structure making $\,h\,$
analytic. Namely, as shown by DeTurck and Goldschmidt
\cite{deturck-goldschmidt}, the analog of (\ref{ana}) holds for
har\-mon\-ic-cur\-va\-ture metrics, while the base $\,(M\nh,h)$ is isometric
to a totally geodesic sub\-man\-i\-fold of the warped product (\ref{nwp}).
\begin{remark}\label{surfc}If $\,\dim M=2$, condition (i) amounts to 
\hbox{$\,(dK\hn^h\nh+\hh\p K\hn^h\nh\phi^{-\nnh1}\nh d\phi)\wedge h$} 
$+\p\hh\phi^{-\nh2}\nh d\phi\wedge Dd\phi=0$, as one sees using (\ref{cod})
and (\ref{dfa}.a) for the pair $\,(f\nh,\bz)\,$ equal to $\,(K\hn^h\nh,h)\,$
or $\,(\phi^{-\nnh1}\nh,Dd\phi)$, with $\,\mathrm{Ric}^h\nh=K\hn^h\nh h$,
followed by (\ref{dfa}.b).
\end{remark}
\begin{remark}\label{parts}Any nontrivial warped product (\ref{nwp}) with a 
compact base $\,(M\nh,h)\,$ and harmonic curvature has $\,\p=\dim\varPi\ge2$,
as the Ein\-stein constant $\,\gz\,$ of the fibre $\,(\varPi\nh,\h)\,$ must be
positive \cite[Theorem 1.4]{derdzinski-piccione-20}: the $\,h$-in\-ner product
of the left-hand side of (ii) with the $\,h$-grad\-i\-ent $\,D\phi\,$
obviously differs by an $\,h$-di\-ver\-gence from $\,-\phi\hs^{\p\hn-\nh2}$ times the $\,h$-norm squared of $\,Dd\phi\,$ while, with 
$\,\varLambda=h(D\phi,D\phi)$, the analogous inner product for the right-hand side 
equals $\,[(\p\hh-\nh1)\varLambda-\gz\hh]\varLambda\nh\phi\hs^{\p\hn-\nh4}\nh$.
\end{remark}
\begin{remark}\label{pgron}
The case of a one\hs-di\-men\-sion\-al fibre (\hs$\p=1$), for nontrivial 
warped products with harmonic curvature, is of very limited interest: it 
precludes compactness of the base (Remark~\ref{parts}) and, for 
two\hs-di\-men\-sion\-al bases -- the main focus of this paper -- the 
resulting three-man\-i\-folds (\ref{nwp}) are con\-for\-mal\-ly flat
\cite[Sect.~16.4(e)]{besse}.
\end{remark}

\section{Warped products with two\hs-di\-men\-sion\-al bases\done}\label{wt}
\setcounter{equation}{0}
Recall that the \wp\ metric in (\ref{nwp}) is con\-for\-mal to a product 
metric:
\begin{equation}\label{hpf}
h+\nh\phi^2\h\hs\,\,=\,\,\hs[\hs g+\h\hs]/\vt^2,\hskip8pt\mathrm{where\ }
\,\,g\,=\,\phi^{-\nh2}\nh h\,\,\mathrm{\ and\ }\,\vt=1/\phi\hh.
\end{equation}
The question of which nontrivial warped products (\ref{nwp}) with 
two\hs-di\-men\-sion\-al bases have harmonic curvature may obviously be
rephrased in terms of the surface metric $\,g=\phi^{-\nh2}\nh h\,$ and the 
function $\,\vt=1/\phi$. Remark~\ref{pgron} and condition (iii) of
Section~\ref{wp} allow us to assume that the fibre $\,(\varPi\nh,\h)\,$ is 
an Ein\-stein manifold of dimension $\,\p\ge2$ with some Ein\-stein constant 
$\,\gz$. In Section~\ref{pt} we prove the following result.
\begin{theorem}\label{srfbs}
Given a Riemannian surface\/ $\,(M\nh,g)$, a nonconstant function 
$\,\vt:M\to(0,\infty)$, and an Ein\-stein manifold\/ $\,(\varPi\nh,\h)\,$ of 
dimension\/ $\,\p\ge2\,$ with the Ein\-stein constant $\,\gz$, the 
warp\-ed-\hn prod\-uct metric\/ $\,[\hs g+\h\hs]/\vt^2$ on\/ 
$\,M\times\hh\varPi\,$ has harmonic curvature if and only if the Gauss\-i\-an 
curvature\/ $\,K\nh$ of\/ $\,g\,$ satisfies the equation
\begin{equation}\label{rwr}
(2K\nnh+\p\hs\gz)\vt^2\nh+2(\p+\hn1)\vt\hs\Delta\vt
-(\p+\hn1)(\p+\hn2)\hh g(\nabla\hskip-1.1pt\vt,\nabla\hskip-1.1pt\vt)\,=\,\lz
\end{equation}
for a constant\/ $\,\lz\in\bbR$, and one of the following two conditions 
occurs.
\begin{enumerate}
  \def\theenumi{{\rm\alph{enumi}}}
\item $K\,$ is constant, and equal to\/ $\,-\gz/\nh(\p\hh-\nh1)$,
\item $K\,$ is nonconstant, $\,(\p\hh-\nh1)K\nnh+\gz\ne0\,$ everywhere in\/ 
$\,M\nh$, and\/ $\,\vt\,$ equals a positive constant times 
$\,|\hh(\p\hh-\nh1)K\nnh+\gz\hs|^{1/(\p\hh-\nh1)}\nh$.
\end{enumerate}
The constant\/ $\,\lz\,$ in\/ {\rm(\ref{rwr})} then coincides with the scalar 
curvature of\/ $\,[\hs g+\h\hs]/\vt^2\nh$.
\end{theorem}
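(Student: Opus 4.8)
The plan is to peel off the harmonic-curvature conditions (i)--(iii) of Section~\ref{wp} one at a time, rephrasing everything through the surface metric $\,g=\phi^{-2}h\,$ and the function $\,\vt=1/\phi$. Condition (iii) is built into the hypotheses. First I would check that (\ref{rwr}) is nothing but (\ref{csc}) transcribed into $\,g,\vt,K$: substituting the conformal rules of Remark~\ref{sccrv} (with $\,\m=2$, so $\,K^h=\vt^2K+\vt\Delta\vt-g(\nabla\vt,\nabla\vt)\,$ and $\,\Delta^h=\vt^2\Delta$) together with the identity (\ref{fdf}) into (\ref{csc}), the $\,\vt\Delta\vt\,$ and $\,g(\nabla\vt,\nabla\vt)\,$ coefficients collapse to $\,2(\p+1)\,$ and $\,-(\p+1)(\p+2)$. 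This identifies the constant $\,\lz\,$ in (\ref{rwr}) as the scalar curvature of the warped product, proving the theorem's last assertion. Next, exactly as in the derivation of (\ref{cte}), the Bianchi identity applied to the Codazzi tensor in (i) makes $\,-2\p\,\mathrm{div}^h(\phi^{-1}Dd\phi)\,$ exact and equal to $\,d[\scal^h-2\p\phi^{-1}\Delta^h\phi]$; comparing this with the reformulation (\ref{ftd}) of (ii) shows that, once (i) holds, condition (ii) is equivalent to constancy of the quantity in (\ref{cte}) --- that is, to (\ref{rwr}). Thus harmonic curvature amounts to (iii), (i) and (\ref{rwr}), and the whole problem reduces to showing that, modulo (\ref{rwr}), condition (i) is equivalent to the dichotomy (a)/(b).

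For that reduction I would start from the form of (i) in Remark~\ref{surfc} and apply the isomorphism $\,J^*$ of Lemma~\ref{extpr}; by (\ref{jxh}) the $\,2$-form-valued $\,1$-form identity in (i) becomes the single $\,1$-form equation
\[
dK^h+\p\phi^{-1}K^h\,d\phi+\p\phi^{-2}[(\Delta^h\phi)\,d\phi-dH/2]=0,\qquad H=h(D\phi,D\phi)
\]
(work on the oriented double cover if $\,M\,$ is nonorientable; the conclusion is orientation-free). Rewriting each $\,h$-object in terms of $\,g,\vt,K\,$ via Remark~\ref{sccrv} and (\ref{tnd}) turns this into a $\,1$-form equation in $\,dK,\,d\vt,\,d\Delta\vt\,$ and $\,dg(\nabla\vt,\nabla\vt)$. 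The decisive move is then to substitute the differential of (\ref{rwr}), which supplies $\,d(\vt\Delta\vt)$, so as to eliminate the remaining second-order data. I expect this cancellation to be the heart of the matter: once (\ref{rwr}) is invoked, the $\,dg(\nabla\vt,\nabla\vt)\,$ terms cancel identically and the $\,\Delta\vt\,$ terms combine away, leaving the clean first-order relation
\[
\vt\,dK=[(\p-1)K+\gz]\,d\vt .
\]
Verifying that all higher-order contributions really do drop out is the main bookkeeping obstacle; it is exactly what forces the rigid dichotomy.

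The dichotomy is then immediate. Writing $\,W=(\p-1)K+\gz$, so that $\,dW=(\p-1)dK$, the displayed relation reads $\,\vt\,dW=(\p-1)W\,d\vt$, i.e.\ $\,d(W\vt^{1-\p})=0$, which holds on all of $\,M\,$ since $\,\vt>0$. By connectedness $\,W=C\vt^{\p-1}\,$ for a constant $\,C$. If $\,C=0\,$ then $\,W\equiv0$, that is $\,K\equiv-\gz/(\p-1)$, which is case (a); if $\,C\neq0\,$ then $\,W\,$ never vanishes, $\,K\,$ is nonconstant (otherwise $\,\vt\,$ would be constant, against the hypothesis), and $\,\vt=|C|^{1/(1-\p)}|W|^{1/(\p-1)}$, which is case (b). Conversely, every step above is reversible for $\,\vt>0\,$ and $\,\p\ge2$: either case of the dichotomy yields $\,d(W\vt^{1-\p})=0$, hence the first-order relation, hence --- via the reversible substitution of (\ref{rwr}) --- condition (i); combined with (\ref{rwr}) this returns (ii), and together with (iii) it gives harmonic curvature of the warped product. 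This closes the equivalence in both directions.
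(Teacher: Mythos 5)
Your proposal is correct and follows essentially the same route as the paper's own proof in Section~\ref{pt}: given the Einstein fibre, harmonic curvature is reduced to the single $1$-form equation coming from condition (i) via Lemma~\ref{extpr} (the paper's (\ref{bec}.a)) together with constancy of the scalar curvature, i.e.\ (\ref{rwr}), and then combining the differential of (\ref{rwr}) with that $1$-form equation yields precisely your first-order relation $\vt\,dK=[(\p-1)K+\gz]\,d\vt$, which is the paper's $d\{\vt^{1-\p}[(\p-1)K+\gz]\}=0$, from which the dichotomy (a)/(b) follows by connectedness. The cancellation you flag as the main bookkeeping obstacle does work out exactly as you predict -- the $d\hh\Delta\vt$, $d\hh g(\nabla\vt,\nabla\vt)$ and $(\Delta\vt)\,d\vt$ terms all drop out of the combination -- so there is no gap.
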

The positive constant mentioned of Theorem~\ref{srfbs}(b) may always be 
assumed equal to $\,1\,$ by simultaneously rescaling $\,\vt\,$ and $\,\lz$, 
so that (\ref{rwr}) still holds. The resulting {\it normalized version\/} of 
case (b) in Theorem~\ref{srfbs} amounts to a condition imposed on $\,K\hs$ 
alone, with no reference to $\,\vt\,$ at all. Explicitly, it reads
\begin{equation}\label{csb}
(\p+\hn1)\hs[2\hs\omega\hh\Delta\hn K\,
-\,(3\p\hh-\nh2)\hs g(\nabla\nnh K\nh,\nnh\nabla\nnh K)]\,
=\,\lz\hs|\hh\omega|^{2(\p\hh-\nh2)/(\p\hh-\nh1)}\hs
-\,(2K\nnh+\p\hs\gz)\hh\omega^2
\end{equation}
for $\,\omega=(\p\hh-\nh1)K\nnh+\gz$, with constants $\,\gz,\lz\in\bbR$, where 
$\,K\,$ is the (nonconstant) Gauss\-i\-an curvature of the Riemannian surface 
$\,(M\nh,g)$, and $\,\omega\ne0\,$ everywhere.
\begin{theorem}\label{uniqu}
Under the assumptions stated in the preceding three lines, 
if\/ $\hs M$ \hbox{is compact,} $\,\gz\,$ and\/ $\,\lz\,$ are uniquely
determined by\/ $\hs\,g\,$ and\/ $\,\p$.
\end{theorem}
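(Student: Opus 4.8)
The plan is to prove uniqueness by contradiction, and the first observation is that it suffices to rule out $\gz_1\ne\gz_2$ for two pairs $(\gz_1,\lz_1),(\gz_2,\lz_2)$ both satisfying (\ref{csb}) for the given $\,g,\p$. Indeed, if $\gz_1=\gz_2$ the two copies of (\ref{csb}) have identical left-hand sides and identical $(2K+\p\hs\gz)\hs\omega^2$ terms, so subtracting them leaves $(\lz_1-\lz_2)\hs|\hh\omega|^{2(\p-2)/(\p-1)}=0$; since $\omega=(\p-1)K+\gz\ne0$ everywhere this forces $\lz_1=\lz_2$. I therefore assume $\gz_1\ne\gz_2$, writing $\omega_i=(\p-1)K+\gz_i$, $\,s=2(\p-2)/(\p-1)$, and $\tz=\gz_1-\gz_2\ne0$.

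First I would subtract the two copies of (\ref{csb}). The term $g(\nabla K,\nabla K)$ carries the parameter-free coefficient $(3\p-2)(\p+1)$, so it cancels and one is left with
\[
2(\p+1)\hs\tz\,\Delta\hn K\,=\,\lz_1|\omega_1|^{\hs s}-\lz_2|\omega_2|^{\hs s}
-(2K+\p\hs\gz_1)\hs\omega_1^{\hs2}+(2K+\p\hs\gz_2)\hs\omega_2^{\hs2}\hh,
\]
whose right-hand side is a function of $K$ alone; since $\tz\ne0$, this exhibits $\Delta\hn K$ as a function of $K$. Substituting this back into one copy of (\ref{csb}) and solving for $g(\nabla K,\nabla K)$, legitimate because $(3\p-2)(\p+1)\ne0$, shows $g(\nabla K,\nabla K)$ is a function of $K$ too, so $K$ is isoparametric.

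Writing $\Delta\hn K=\varSigma(K)$ and $g(\nabla K,\nabla K)=2Z(K)$, Lemma~\ref{nabdk}(ii) now yields the differential identity (\ref{zpm}) on the (nondegenerate) range interval $I$ of the nonconstant $K$. At the same time the two copies of (\ref{csb}) become the linear system
\[
\omega_i\hs\varSigma-(3\p-2)\hs Z\,=\,\frac{\lz_i|\omega_i|^{\hs s}-(2K+\p\hs\gz_i)\hs\omega_i^{\hs2}}{2(\p+1)}\hh,\qquad i=1,2,
\]
whose determinant $-(3\p-2)\hs\tz$ is nonzero; solving it expresses $\varSigma$ and $Z$ on $I$ explicitly, each as a polynomial in $K$ plus a combination of the fractional powers $|\omega_1|^{\hs s},|\omega_2|^{\hs s}$ (and, for $Z$, also $|\omega_1|^{\hs s+1},|\omega_2|^{\hs s+1}$), all of which are real-analytic on $I$ because $\omega_i\ne0$.

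The final step, which I expect to be the main obstacle, is to feed these explicit $\varSigma,Z$ into (\ref{zpm}) and reach a contradiction. When $\p\ge4$ the exponent $s=2-2/(\p-1)$ is not an integer, and the functions $|\omega_1|^{\hs s},|\omega_2|^{\hs s}$ together with the further powers $|\omega_1|^{2s},|\omega_2|^{2s},|\omega_1|^{\hs s}|\omega_2|^{\hs s}$ produced by the quadratic left-hand side of (\ref{zpm}) are linearly independent over the polynomials in $K$, so the polynomial coefficient of each such function must vanish identically; this yields an over-determined system in $\gz_1,\gz_2,\lz_1,\lz_2$ admitting no solution with $\tz\ne0$. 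In the low-dimensional cases $\p=2$ and $\p=3$ one has $s=0$ and $s=1$, and since each $\omega_i$ has constant sign on the connected $M$, every $|\omega_i|^{\hs s}$ is itself polynomial in $K$, so (\ref{zpm}) is a polynomial identity from the outset and the same coefficient comparison applies. In all cases $\tz=0$, contradicting $\gz_1\ne\gz_2$, so $\gz$ and hence $\lz$ are unique. The bulk of the work, and the genuine difficulty, is the bookkeeping of this last step: identifying exactly which products of $|\omega_1|^{\hs s}$ and $|\omega_2|^{\hs s}$ survive in the nonlinear identity (\ref{zpm}) and verifying that their coefficients cannot vanish simultaneously unless $\tz=0$.
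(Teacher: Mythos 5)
Your reduction to uniqueness of $\gz$, the subtraction argument showing that $\Delta K$ and $g(\nabla K,\nabla K)$ are functions $\varSigma(K)$ and $2Z(K)$ of $K$ alone, and the appeal to Lemma~\ref{nabdk}(ii) to obtain (\ref{zpm}) coincide exactly with the paper's own first steps (cf.\ (\ref{sgz}) -- (\ref{rng})). The genuine gap is the step you yourself defer: you assert that substituting the explicit $\varSigma,Z$ into (\ref{zpm}) and comparing coefficients yields a system with no solution when $\tz=\gz_1-\gz_2\ne0$, but you never exhibit a coefficient that fails to match, and that is where all the content of the theorem lies. For $\,\p>2\,$ your strategy can in fact be completed, but the two ingredients it needs are exactly what is missing. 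First, since $s=2(\p-2)/(\p-1)<2$, every genuinely fractional term on either side of (\ref{zpm}) grows slower than $K^4$, so once one knows the fractional powers are independent over rational functions, the purely polynomial parts of the two sides must agree up to degree $4$; note that your independence claim is literally false for $\,\p=5$, where $2s=3$ is an integer and $|\omega_i|^{2s}$ is itself polynomial on an interval of fixed sign -- harmless only because those contributions have degree $3<4$. Second, one must show the degree-$4$ coefficients of the two polynomial parts actually differ: this is precisely the long computation the paper carries out in (\ref{omp}) -- (\ref{zpp}), culminating in the final clause of Lemma~\ref{limit}, where the difference is $(\p-1)^3(\p-2)$ times a positive function of $\,\p$. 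It is also worth noting that the paper itself does \emph{not} argue this way on the bounded interval of (\ref{rng}): it uses Lemma~\ref{limit} only to exclude the two unbounded intervals, and then switches to a global geometric argument (isoparametricity of $K$ gives a local Killing field by (\ref{kil}.ii); real-analyticity (\ref{ana}), Lemma~\ref{glbki} and $\chi(M)<0$ extend it, contradicting Remark~\ref{holom}). Your route, if completed, would treat all three intervals uniformly by function-field algebra and would not use compactness at all for $\,\p>2\,$ -- a genuinely different and, in that range, stronger argument -- but as submitted only the setup common to both proofs is on the page.

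For $\,\p=2\,$ your final claim is wrong, not merely unproved. With $\,\p=2\,$ one has $s=0$, and a short computation from your own formulas (write $u=K+\gz_1$ and $A=(\lz_1-\lz_2)/(6\tz)$) gives $\varSigma=A-u^2+\tz u-\tz^2/3$ and $24Z=-4u^3+6\tz u^2+(6A-2\tz^2)\hs u-\lz_1$, whence $2Z'-\varSigma=(\tz^2-3A)/6$ and $Z''-\varSigma'-K=(\gz_1+\gz_2)/2$ are \emph{constants}. Equation (\ref{zpm}) then holds identically whenever $\gz_2=-\gz_1$ and $\lz_1-\lz_2=2\tz^3$, so the coefficient comparison produces no contradiction whatsoever: the ``same coefficient comparison'' does not apply, and no purely algebraic argument can. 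Uniqueness for $\,\p=2\,$ requires input beyond (\ref{zpm}) -- for instance the positivity of $\gz$ forced by compactness (Remark~\ref{parts}), which these spurious solutions violate since $\gz_1$ and $\gz_2=-\gz_1$ cannot both be positive -- and this is why the paper disposes of $\,\p=2\,$ at the very start of its proof by citing \cite[Remark 4]{derdzinski-88} and assumes $\,\p>2\,$ throughout.
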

Theorem~\ref{uniqu}, which will be proved in Section~\ref{pf}, has an obvious
consequence: the product $\,\gz\mathrm{A}$, for
$\,\mathrm{A}=\mathrm{area}\hs(M\nh,g)$, is a homothety invariant of $\,g$.
Note that multiplying $\,g\,$ by $\,z\in(0,\infty)\,$ causes the quintuple 
$\,(\mathrm{A},\nh K\nh,\gz,\omega,\lz)\,$ to be replaced with 
$\,(z\hn\mathrm{A},z^{-\nnh1}\nnh K\nh,z^{-\nnh1}\nh\gz,z^{-\nnh1}\hn\omega,
z^{(1+\p)/(1-\p)}\nh\lz)$.      
\begin{remark}\label{hminv}Another homothety invariant, naturally associated 
with any nonflat compact Riemannian surface having the Gauss\-i\-an curvature 
$\,K\nh$, is the point
$\,[K\nnh_{\mathrm{max}}\w\hskip-3pt:\hskip-3ptK\nnh_{\mathrm{min}}\w]\,$
of the real projective line $\,\bbR\mathrm{P}^1\nh$, where
$\,[\hskip3pt:\hskip3pt]\,$ are the homogeneous coordinates. Clearly, $\,K\hs$
is constant if and only if
$\,[K\nnh_{\mathrm{max}}\w\hskip0pt:\hskip0ptK\nnh_{\mathrm{min}}\w]
=[1\hskip-1.9pt:\hskip-2pt1]$.
\end{remark}
\begin{remark}\label{empos}Whenever the hypotheses of Theorem~\ref{srfbs} are 
satisfied, along with (\ref{rwr}) for a constant $\,\lz$, and one of 
conditions (a) -- (b) holds, compactness of $\,M\,$ implies positivity of both 
$\,\gz\,$ and $\,\lz$. See Remarks~\ref{parts} and~\ref{postv}.
\end{remark}
\begin{remark}\label{cnffl}In the context of Question~\ref{which}, case (a) of 
Theorem~\ref{srfbs} is of interest only for $\,\p\ge4$, since an Ein\-stein 
manifold $\,(\varPi\nh,\h)\,$ of dimension $\,\p\in\{2,3\}$ with the 
Ein\-stein constant $\,\gz\,$ has constant sectional curvature 
$\,\gz/\nh(\p\hh-\nh1)$. According to Remark~\ref{cnfpr}, this implies
con\-for\-mal flatness of the har\-mon\-ic-cur\-va\-ture metric
$\,[\hs g+\h\hs]/\vt^2$ (while, in case (b), $\,[\hs g+\h\hs]/\vt^2$ is never
con\-for\-mal\-ly flat).
\end{remark}

\section{Proof of Theorem~\ref{srfbs}\done}\label{pt}
\setcounter{equation}{0}
Due to (\ref{hpf}), 
$\,[\hs g+\h\hs]/\vt^2$ has harmonic curvature if and only if $\,(M\nh,h)\,$ 
satisfies (i) and (ii) in Section~\ref{wp} or, equivalently, (i) and 
(\ref{rdf}). This further amounts to
\begin{equation}\label{bec}
\begin{array}{rl}
\mathrm{a)}&
\phi^2dK\hn^h\nnh\,
+\,p\hs[K\hn^h\hn\phi\,d\phi+(\Delta\nnh^h\nnh\phi)\hs d\phi-d\varLambda\nnh/2\hh]\,
=\,0\hh,\\
\mathrm{b)}&
\phi^2(K\hn^hd\phi+d\hh\Delta\nnh^h\nnh\phi)\,
-\,[(\p\hh-\nh1)\varLambda-\gz\hh]\,d\phi\,
-\,(1\nh-\p/2)\phi\,d\varLambda\,=\,0\hh,
\end{array}
\end{equation}
with $\,\varLambda=h(D\phi,D\phi)$, and $\,K\hn^h$ denoting the Gauss\-i\-an curvature 
of $\,h$. Namely, $\,\mathrm{Ric}^h\nh=K\hn^h\nh h$, so that (\ref{rdf}) 
becomes (\ref{bec}.b), while Remark~\ref{surfc} and (\ref{jxh}) easily yield 
the equivalence between (i) and (\ref{bec}.a). 

As a consequence of (\ref{bec}), we obtain the relation (\ref{cte}), which now 
reads
\begin{equation}\label{khp}
2K\hn^h\nh+p\hh[\gz\phi^{-\nh2}\nh-2\phi^{-\nnh1}\nnh\Delta\nnh^h\nnh\phi
-(\p\hh-\nh1)\phi^{-\nh2}\nh\varLambda]=\lz\,\mathrm{\ for\ some\ 
}\,\lz\in\bbR\hh.
\end{equation}
Explicitly, subtracting (\ref{bec}.b) multiplied by $\,2\p\phi^{-\nh3}$ from
$\,2\phi^{-\nh2}$ times (\ref{bec}.a), we see that 
$\,d\hh\{\ldots\}=0$, with $\,\{\ldots\}\,$ denoting the left-hand side in 
(\ref{khp}). The system (\ref{bec}) is thus equivalent to one consisting of
(\ref{bec}.a) and (\ref{khp}), namely
\begin{equation}\label{eqv}
\begin{array}{rl}
\mathrm{i)}&
\phi^2dK\hn^h\nnh\,
+\,p\hs[K\hn^h\hn\phi\,d\phi+(\Delta\nnh^h\nnh\phi)\hs d\phi
-d\varLambda/2\hh]\,
=\,0\hh,\\
\mathrm{ii)}&
2K\hn^h\nh+p\hh[\gz\phi^{-\nh2}\hs-\,2\phi^{-\nnh1}\nnh\Delta\nnh^h\nnh\phi\,
-\,(\p\hh-\nh1)\phi^{-\nh2}\nh\varLambda]\,\mathrm{\ is\ constant.}
\end{array}
\end{equation}
Let us now rewrite (\ref{eqv}) in terms of the con\-for\-mal\-ly related 
metric $\,g=\phi^{-\nh2}\nh h\,$ and the function $\,\vt=1/\phi\,$ on 
$\,M\nh$, using the symbols $K,\nabla\nh,\Delta\,$ for the Gauss\-i\-an 
curvature of $\,g$, the $\,g$-grad\-i\-ent, and the 
$\,g\hh$-\nh La\-plac\-i\-an, as well as setting 
$\,Q=g(\nabla\hskip-1.1pt\vt,\nabla\hskip-1.1pt\vt)\,$ and 
$\,Y\nh=\Delta\vt$. Since $\,\varLambda\nh=Q/\hn\vt^2$ and 
$\,\Delta(1/\hn\vt)=(2\hh Q-\vt Y)/\hn\vt^3\nh$, Remark~\ref{sccrv} yields
\begin{equation}\label{tqd}
\begin{array}{rl}
\mathrm{a)}&
\vt^3dK\nnh+\vt^2dY\nh-(1+\p/2)\vt\hs d\hh Q
+[(2-\p)K\vt+Y]\hh\vt\,d\vt\,=\,0\hh,\\
\mathrm{b)}&
(2K\nnh+p\gz)\vt^2\nh+2(\p+\hn1)\vt Y\nh
-(\p+\hn1)(\p+\hn2)\hh Q\,\mathrm{\ is\ constant.}
\end{array}
\end{equation}
Finally, we may replace (\ref{tqd}) with the (obviously equivalent) system 
consisting of (\ref{tqd}.b) and the equality 
$\,2\p(\p\hh-\nh1)^{-\nnh1}\hn\vt^{\p+\hn2}
d\hs\{\hn\vt^{1\nh-\p}[(\p\hh-\nh1)K\nnh+\gz\hh]\}=0\,$ obtained by applying 
$\,d\,$ to (\ref{tqd}.b), multiplying the result by $\,\vt$, and then 
subtracting it from $\,2(\p+\hn1)\,$ times (\ref{tqd}.a). This proves 
Theorem~\ref{srfbs}, with the cases (a), (b) depending on whether the constant 
$\,\vt^{1\nh-\p}[(\p\hh-\nh1)K\nnh+\gz\hh]\,$ is or is not equal to $\,0$. 

\section{Proof of Theorem~\ref{uniqu}\done}\label{pf}
\setcounter{equation}{0}
We assume that $\,\p>2$, as the case $\,\p=2\,$ is already settled in 
\cite[Remark 4]{derdzinski-88}.

It suffices to establish uniqueness of  $\,\gz$, since (\ref{csb}) provides an 
expression for $\,\lz\,$ in terms of $\,\gz\,$ and geometric invariants of 
$\,g$. Suppose that, on the contrary, in addition to (\ref{csb}) with 
$\,\omega=(\p\hh-\nh1)K\nnh+\gz\,$ one also has
\[
(\p+\hn1)\hs[2\hs\tilde\omega\hh\Delta\hn K\,
-\,(3\p\hh-\nh2)\hs g(\nabla\nnh K\nh,\nnh\nabla\nnh K)]\,
=\,\tilde\lz\hs|\hs\tilde\omega|^{2(\p\hh-\nh2)/(\p\hh-\nh1)}\hs
-\,(2K\nnh+\p\hs\tilde\gz)\hh\tilde\omega^2
\]
for $\,\tilde\omega=(\p\hh-\nh1)K\nnh+\tilde\gz\,$ and constants 
$\,\tilde\gz,\tilde\lz$, while $\,\tilde\gz<\gz\,$ and 
$\,\omega\hh\tilde\omega\ne0\,$ everywhere, cf.\ the lines following
(\ref{csb}). As $\,\omega-\tilde\omega=\gz-\tilde\gz$, subtracting the last
equality (or, $\,\omega\,$ times it) from (\ref{csb}) or, respectively, from
$\,\tilde\omega\,$ times (\ref{csb}), one gets $\,\Delta\hn K=\varSigma(K)$
and $\,g(\nabla\nnh K,\nnh\nabla\nnh K)=2Z(K)\,$ with the functions
$\,\varSigma\,$ and $\,Z$ of the variable $\,K\hs$ given by
\begin{equation}\label{sgz}
\begin{array}{rl}
\mathrm{i)}&
2(\p+\hn1)(\gz-\tilde\gz)\hh\varSigma(K)=
\tilde\omega^2\nh\tilde\varTheta-\omega^2\nh\varTheta\hs,\\
\mathrm{ii)}&
2(\p+\hn1)(3\p\hh-\nh2)(\gz-\tilde\gz)Z(K)
=\omega\hh\tilde\omega\hs[\hs\tilde\omega\tilde\varTheta
  -\omega\varTheta]\hh,\hskip6pt\mathrm{for}\\
\mathrm{iii)}&\omega=(\p\hh-\nh1)K\nnh+\gz\hh,\hskip16pt
\tilde\omega=(\p\hh-\nh1)K\nnh+\tilde\gz\hh,\hskip6pt\mathrm{and}\\
\mathrm{iv)}&\varTheta=2K\nnh+\p\hs\gz-\lz\hs|\hh\omega|^{2/(1\nh-\p)}\hh,
\hskip16pt\tilde\varTheta=2K\nnh+\p\hs\tilde\gz
-\tilde\lz\hs|\hs\tilde\omega|^{2/(1\nh-\p)}\nh.
\end{array}
\end{equation}
Thus, since $\,\omega\hh\tilde\omega\ne0\,$ everywhere,
\begin{equation}\label{rng}
\begin{array}{l}
\mathrm{the\ values\ of\ }\hs K\mathrm{\ lie\ in\
}\hs I\nnh_*\w\hn\mathrm{,\ which\ is\ one\ of\ the\ intervals}\\
(-\infty,\gz/(1-\p))\hh,\,(\gz/(1-\p),\tilde\gz/(1-\p))\hh,
\,(\tilde\gz/(1-\p),\infty)\nh,\\
\mathrm{while\ }\,\,\hs\omega,\tilde\omega,\varTheta\nh,\tilde\varTheta\nh,
\varSigma\,\mathrm{\ and\ }\hs\,Z\,\mathrm{\ are\
real}\hyp\mathrm{an\-a\-lyt\-ic\ functions}\\
\mathrm{of\ the\ real\ variable\ \nh}K\nnh,\nnh\mathrm{\ defined\ on\ the\
whole\ interval\ \nh}I\nnh_*\w\nh.
\end{array}
\end{equation}
As $\,(\p\hh-\nh1)K\nnh=\omega-\gz=\tilde\omega-\tilde\gz$, (\ref{sgz}.iii) --
(\ref{sgz}.iv), with $\,(\hskip2.3pt)'\nh=d/dK\nh$, give 
\begin{equation}\label{omp}
\begin{array}{l}
(\p\hh-\nh1)\varTheta=2\hs\omega+(\p+\nh1)(\p\hh-\nh2)\hh\gz
-(\p\hh-\nh1)\lz\hs|\hh\omega|^{2/(1\nh-\p)},\\
(\p\hh-\nh1)\tilde\varTheta=2\hs\tilde\omega+(\p+\nh1)(\p\hh-\nh2)\hh\tilde\gz
-(\p\hh-\nh1)\tilde\lz\hs|\hs\tilde\omega|^{2/(1\nh-\p)}\nh,\\
\omega'\nh=\tilde\omega'\nh=\p\hh-\nh1\hh,\hskip8.2pt
\varTheta'\nh=2
+2(\mathrm{sgn}\,\omega)\lz\hh|\hh\omega|^{(1+\p)/(1\nh-\p)},\\
\tilde\varTheta'\nh=2
+2(\mathrm{sgn}\,\tilde\omega)\tilde\lz\hh|\hs\tilde\omega\hh|^{(1+\p)/(1\nh-\p)}.
\end{array}
\end{equation}
Multiplying (\ref{sgz}.i) and (\ref{sgz}.ii) by $\,\p\hh-\nh1$, then using 
(\ref{omp}), we obtain
\begin{equation}\label{psm}
\begin{array}{rl}
\mathrm{i)}&
2(\p^2\nnh-\nh1)(\gz-\tilde\gz)\hh\varSigma
=2(\tilde\omega^3\nh-\omega^3)
+(\p+\nh1)(\p\hh-\nh2)(\tilde\gz\hs\tilde\omega^2\nh-\gz\hs\omega^2)\\
&\hskip52pt+\,(\p\hh-\nh1)[\lz\hs|\hh\omega|^{2(\p\hh-\nh2)/(\p\hh-\nh1)}\nh
-\tilde\lz\hs|\hs\tilde\omega|^{2(\p\hh-\nh2)/(\p\hh-\nh1)}]\nh,\\
\mathrm{ii)}&2(\p^2\nnh-\nh1)(3\p\hh-\nh2)(\gz-\tilde\gz)Z\\
&\hskip41.8pt=\,[2\hs\tilde\omega^3\nh
+(\p+\nh1)(\p\hh-\nh2)\hh\tilde\gz\hs\tilde\omega^2\nh
-(\p\hh-\nh1)\tilde\lz\hs|\hs\tilde\omega|^{2(\p\hh-\nh2)/(\p\hh-\nh1)}]\hs\omega\\
&\hskip41.8pt-\,\,[2\hs\omega^3\nh+(\p+\nh1)(\p\hh-\nh2)\hh\gz\hh\omega^2\nh
-(\p\hh-\nh1)\lz\hs|\hh\omega|^{2(\p\hh-\nh2)/(\p\hh-\nh1)}]\hs\tilde\omega\nh.
\end{array}
\end{equation}
Next, as a consequence of (\ref{omp}),
\begin{equation}\label{oth}
\begin{array}{l}
[\hh\omega\varTheta]'\nh=4\hs\omega+(\p+\nh1)(\p\hh-\nh2)\hh\gz
-(\p-\hn3)\lz\hs|\hh\omega|^{2/(1\nh-\p)},\\
{}[\hh\tilde\omega\tilde\varTheta]'\nh=4\hs\tilde\omega
+(\p+\nh1)(\p\hh-\nh2)\hh\tilde\gz
-(\p-\hn3)\tilde\lz\hs|\hs\tilde\omega|^{2/(1\nh-\p)}.
\end{array}
\end{equation}
Thus, $\,[\hh\omega^2\nh\varTheta]'\nh
=\omega'\omega\varTheta+\omega[\hh\omega\varTheta]'\nh
=\omega\hh\{(\p\hh-\nh1)\varTheta+[\hh\omega\varTheta]'\}$. Now, by 
(\ref{omp}) and (\ref{oth}),
\begin{equation}\label{ost}
\begin{array}{l}
[\hh\omega^2\varTheta]'\nh=2[3\hs\omega+(\p+\nh1)(\p\hh-\nh2)\hh\gz
-(\p\hh-\nh2)\lz\hs|\hh\omega|^{2/(1\nh-\p)}]\hs\omega,\\
{}[\hh\tilde\omega^2\tilde\varTheta]'\nh=2[3\hs\tilde\omega
+(\p+\nh1)(\p\hh-\nh2)\hh\tilde\gz
-(\p\hh-\nh2)\tilde\lz\hs|\hs\tilde\omega|^{2/(1\nh-\p)}]\hs\tilde\omega.
\end{array}
\end{equation}
From (\ref{sgz}.ii), 
$\,2(\p+\hn1)(3\p\hh-\nh2)(\gz-\tilde\gz)Z'\nh
=\{\omega\hh[\hh\tilde\omega^2\tilde\varTheta]-
\tilde\omega\hh[\hh\omega^2\varTheta]\}'\nh$. The Leib\-niz rule applied to 
both products $\,\omega\hh[\hh\tilde\omega^2\tilde\varTheta]\,$ and 
$\,\tilde\omega\hh[\hh\omega^2\varTheta]\,$ yields, via (\ref{omp}), 
(\ref{sgz}.i) and (\ref{ost}),
\[
\begin{array}{l}
2(\p+\hn1)(3\p\hh-\nh2)(\gz-\tilde\gz)Z'\nh
=2(\p^2\nnh-\nh1)(\gz-\tilde\gz)\hh\varSigma\\
\hskip53.8pt+\,2\hs\omega\hh\tilde\omega\hs[(\p^2\nnh-\p+\hn1)(\tilde\gz-\gz)
+(\p\hh-\nh2)\lz\hs|\hh\omega|^{2/(1\nh-\p)}\nh
-(\p\hh-\nh2)\tilde\lz\hs|\hs\tilde\omega|^{2/(1\nh-\p)}]\hh,
\end{array}
\]
as $\,\omega-\tilde\omega=\gz-\tilde\gz$, which, setting
$\,\gamma=\mathrm{sgn}\,\omega\,$ and 
$\,\tilde\gamma=\mathrm{sgn}\,\tilde\omega$, we can rewrite as
\begin{equation}\label{zpr}
\begin{array}{l}
2(\p+\hn1)(\gz-\tilde\gz)[(3\p\hh-\nh2)Z'\nh-(\p\hh-\nh1)\hh\varSigma]
=2\hh(\p^2\nnh-\p+\hn1)(\tilde\gz-\gz)\hs\omega\hh\tilde\omega\\
\hskip58pt+\,2(\p\hh-\nh2)[\gamma\lz\hs\tilde\omega\hs|\hh\omega|^{(\p-\hn3)/(\p\hh-\nh1)}\nh
-\tilde\gamma\tilde\lz\hs\omega\hs|\hs\tilde\omega|^{(\p-\hn3)/(\p\hh-\nh1)}]
\hh.
\end{array}
\end{equation}
With the same meaning of $\,\gamma\,$ and $\,\tilde\gamma$, using
(\ref{sgz}.i) and (\ref{ost}), or (\ref{omp}), we get
\begin{equation}\label{spe}
\begin{array}{rl}
\mathrm{i)}&(\p+\hn1)(\gz-\tilde\gz)\hh\varSigma'\nh
=3(\tilde\omega^2\nh-\omega^2)
+(\p+\nh1)(\p\hh-\nh2)(\tilde\gz\hs\tilde\omega-\gz\hs\omega)\\
&\phantom{(\p+\hn1)(\gz-\tilde\gz)\hh\varSigma'\nh}+\,(\p\hh-\nh2)[\gamma\lz\hh|\hh\omega|^{(\p-\hn3)/(\p\hh-\nh1)}\nh
-\tilde\gamma\tilde\lz\hh|\hs\tilde\omega|^{(\p-\hn3)/(\p\hh-\nh1)}]\hh,\\
\mathrm{ii)}&\varTheta''\nh=-\nh2(\p+\hn1)\lz\hh|\hh\omega|^{2\p/(1\nh-\p)},\hskip8.2pt
\tilde\varTheta''\nh
=-\nh2(\p+\hn1)\tilde\lz\hh|\hs\tilde\omega\hh|^{2\p/(1\nh-\p)}.
\end{array}
\end{equation}
Consequently, (\ref{sgz}.ii) gives 
$\,2(\p+\hn1)(3\p\hh-\nh2)(\gz-\tilde\gz)Z''\nh
=[(\omega\hs\tilde\omega^2)\tilde\varTheta]''\nh
-[(\omega^2\tilde\omega)\varTheta]''\nh
=(\omega\hs\tilde\omega^2)\tilde\varTheta''\nh
-(\omega^2\tilde\omega)\varTheta''\nh
+2(\omega\hs\tilde\omega^2)'\tilde\varTheta'\nh
-2(\omega^2\tilde\omega)'\varTheta'\nh
+(\omega\hs\tilde\omega^2)''\tilde\varTheta
-(\omega^2\tilde\omega)''\varTheta$. As (\ref{omp}) easily implies that 
$\,(\omega\hs\tilde\omega^2)'\nh,(\omega^2\tilde\omega)'\nh,
(\omega\hs\tilde\omega^2)''$ and $\,(\omega^2\tilde\omega)''$ are, 
respectively, equal to 
$\,(\p\hh-\nh1)(\tilde\omega+2\hs\omega)\hs\tilde\omega$, 
$\,(\p\hh-\nh1)(\omega+2\hs\tilde\omega)\hs\omega$, 
$\,2(\p\hh-\nh1)^2(2\hs\tilde\omega+\omega)\,$ and 
$\,2(\p\hh-\nh1)^2(2\hs\omega+\tilde\omega)$, we obtain 
$\,2(\p+\hn1)(3\p\hh-\nh2)(\gz-\tilde\gz)Z''\nh
=(\omega\hs\tilde\omega^2)\tilde\varTheta''\nh
-(\omega^2\tilde\omega)\varTheta''\nh
+2(\p\hh-\nh1)(\tilde\omega+2\hs\omega)\hs\tilde\omega\hs\tilde\varTheta'\nh
-2(\p\hh-\nh1)(\omega+2\hs\tilde\omega)\hs\omega\hs\varTheta'\nh
+2(\p\hh-\nh1)(2\hs\tilde\omega+\omega)[(\p\hh-\nh1)\tilde\varTheta]
-2(\p\hh-\nh1)(2\hs\omega+\tilde\omega)[(\p\hh-\nh1)\varTheta]$. Therefore, 
replacing $\,\tilde\varTheta''\nh,\varTheta''\nh,\tilde\varTheta'\nh,
\varTheta'\nh,(\p\hh-\nh1)\tilde\varTheta\,$ and $\,(\p\hh-\nh1)\varTheta\,$ 
with the expressions provided by (\ref{spe}.ii) and (\ref{omp}), we 
see that, since $\,\omega-\tilde\omega=\gz-\tilde\gz$,
\begin{equation}\label{zpp}
\begin{array}{l}
(\p+\hn1)(3\p\hh-\nh2)Z''\,
+\,\,3(\p\hh-\nh1)(\p^2\nnh-\p+\hn2)\hs\omega\,\,\mathrm{\hs\ equals\hs\ the\hs\ sum\hs\ of\hs\ a}\\
\mathrm{constant\hs\ and\hs\ a\hs\hs\ constant
}\hn\hyp\hh\mathrm{coefficient\hs\ combination\ of\hs\ the\hs\ functions}\\
\lz\hs\tilde\omega\hs|\hh\omega|^{2/(1\nh-\p)}\nh
-\tilde\lz\hs\omega\hs|\hs\tilde\omega|^{2/(1\nh-\p)}\nh\mathrm{\ and\
}\gamma\lz\hh|\hh\omega|^{(\p-\hn3)/(\p\hh-\nh1)}\nh
-\tilde\gamma\tilde\lz\hh|\hs\tilde\omega|^{(\p-\hn3)/(\p\hh-\nh1)}.
\end{array}
\end{equation}
\begin{lemma}\label{limit}As\/ $\,K\nh\to\pm\infty$, one has the following 
limit relations.
\begin{enumerate}
  \def\theenumi{{\rm\alph{enumi}}}
\item $2(\p+\hn1)\hh\varSigma/K^2$ and\/ 
$\,(\p+\hn1)\hh\varSigma'\nnh/K\hs$ both tend to\/ 
$\,-(\p\hh-\nh1)(\p^2\nnh-\p+\hn4)$,
\item $2(\p+\hn1)(3\p\hh-\nh2)Z/\nh K^3\to-(\p\hh-\nh1)^2(\p^2\nnh-\p+\hn2)$,
\item $2(\p+\hn1)(3\p\hh-\nh2)Z'\nnh/K^2
\to-3(\p\hh-\nh1)^2(\p^2\nnh-\p+\hn2)$,
\item $(\p+\hn1)(3\p\hh-\nh2)Z''\nnh/K\to-3(\p\hh-\nh1)(\p^2\nnh-\p+\hn2)$.
\end{enumerate}
The limits, as\/ $K\nh\to\pm\infty$, \,of\/ 
$4[(\p+\hn1)(3\p\hh-\nh2)]^2(2Z'\nh-\varSigma)(Z'\nh-\varSigma)/K^4$ and of\/ 
$\,8[(\p+\hn1)(3\p\hh-\nh2)]^2(Z''\nh-\hs\varSigma'-\hs K)\hh Z/K^4\hs$ are\/  
$\,-(\p\hh-\nh1)^2(\p\hh-\nh2)(\p^2\nnh+5\p-\nh2)(3\p^2\nnh-\p+\nh2)$ and,
respectively, $\,-4(\p\hh-\nh1)^2(\p\hh-\nh2)
(\p^2\nnh-\p+\hn2)(3\p^3\nnh-\nh5\p^2\nnh+12\p\nh-\nh8)$. The difference
between the former and the latter limits equals\/
$\,(\p\hh-\nh1)^3(\p\hh-\nh2)\,$ times the positive function\/ 
$\,12\p^4\nnh-\nh23\p^3\nnh+55\p^2\nnh-\nh56\p+60\,$ of the real variable\/
$\,\p\ge\nh1$.
\end{lemma}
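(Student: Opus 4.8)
The plan is to use the fact that, on the interval of (\ref{rng}), each of $\varSigma,\varSigma',Z,Z',Z''$ is a polynomial in $\omega$ and $\tilde\omega$---both affine in $K$ with common slope $\p-1$, by (\ref{omp})---plus a constant-coefficient combination of fractional powers $|\omega|^e,|\tilde\omega|^e$. One reads $\varSigma,Z$ from (\ref{psm}), $\varSigma'$ from (\ref{spe}.i), $Z'$ from (\ref{zpr}) together with the value of $\varSigma$, and $Z''$ from (\ref{zpp}); differentiating the formula for $Z$ gives an independent check on $Z',Z''$. The exponents arising are $2(\p-2)/(\p-1)$ for $\varSigma,Z$, then $(\p-3)/(\p-1)$ for $\varSigma',Z'$, and $2/(1-\p)$ together with $(\p-3)/(\p-1)$ for $Z''$; since $\p>2$, each lies strictly below the $K$-degree of the polynomial part beside it, so the fractional terms are negligible as $K\to\pm\infty$ and the limits (a)--(d) are governed by the polynomial parts alone.

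First I would establish (a)--(d). Since $\tilde\omega-\omega=\tilde\gz-\gz$ is constant, the combinations entering (\ref{psm}) and (\ref{spe}.i)---for example $\tilde\omega^3-\omega^3$ or $\tilde\gz\tilde\omega^2-\gz\omega^2$---all carry a factor $\gz-\tilde\gz$ that cancels the one on the left, lowering the apparent $K$-degree by one and leaving leading coefficients that are genuine polynomials in $\p$. Keeping only the top monomial in each identity and cancelling $\p^2-1=(\p-1)(\p+1)$ produces exactly the stated limits. Because the $K^2$-term of $\varSigma$ and the $K^3$-term of $Z$ survive only through such a cancellation, I would retain factored forms like $2\omega\tilde\omega(\tilde\gz-\gz)(\omega+\tilde\omega)$ rather than expand early, to avoid sign errors.

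Next I would substitute (a)--(d) into the two products. One has $2Z'-\varSigma$ and $Z'-\varSigma$ each asymptotic to a constant times $K^2$, so the $K^4$-coefficient of $(2Z'-\varSigma)(Z'-\varSigma)$ is the product of those constants; likewise $Z''-\varSigma'-K$ is asymptotic to a constant times $K$ and $Z$ to a constant times $K^3$, which gives the $K^4$-coefficient of $(Z''-\varSigma'-K)Z$. Each such constant is an explicit rational function of $\p$, and multiplying by $4[(\p+1)(3\p-2)]^2$, respectively $8[(\p+1)(3\p-2)]^2$, clears the denominators and yields the two displayed polynomial limits. A point worth recording is that these $K^4$-coefficients originate entirely from the polynomial parts, hence are independent of $\lz,\tilde\lz$, even though the latter persist in the subleading fractional terms---this is what makes the two limits well defined.

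Finally, subtracting the two limits and pulling out the common factor $(\p-1)^3(\p-2)$ leaves the quartic $12\p^4-23\p^3+55\p^2-56\p+60$, which I would prove positive on $[1,\infty)$ by the grouping
\[
12\p^4-23\p^3+55\p^2-56\p+60
\,=\,\p^2(\p-1)(12\p-11)\,+\,(44\p^2-56\p+60),
\]
where $12\p^2-23\p+11=(\p-1)(12\p-11)$ makes the first summand nonnegative for $\p\ge1$ while the second, of negative discriminant, is positive for all $\p$. The principal obstacle is not conceptual but the bulk and fragility of the algebra: the difference of the two limits is a delicate near-cancellation of two degree-eight polynomials in $\p$. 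I would therefore run the two limit computations independently, using (a)--(d) as their only input and never invoking the identity (\ref{zpm})---which must hold for the geometric $\varSigma,Z$ yet is contradicted at top order by the values just found, this discrepancy being precisely the contradiction that proves Theorem~\ref{uniqu}.
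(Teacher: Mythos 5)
Your route is the paper's own, essentially step for step: (a) from (\ref{psm}.i) and (\ref{spe}.i), (b) from (\ref{psm}.ii), (c) from (\ref{zpr}) combined with (a), (d) from (\ref{zpp}), then the two product limits obtained by multiplying the leading constants from (a)--(d), and positivity via the identical grouping (your $44\p^2-56\p+60$ is the paper's $4(11\p^2-14\p+15)$). Parts (a)--(c) and the first product limit do come out exactly as you predict.

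The step at (d), however, fails -- and your own proposed consistency check (differentiating the formula for $Z$) is precisely what exposes it. By (\ref{sgz}.ii), the polynomial part of $\,2(\p^2-1)(3\p-2)Z\,$ in $K$ is the exact cubic $\,-\hh\omega\hh\tilde\omega\hh[(\p^2-\p+2)\hh\omega+c\hh]$, $c$ a constant, while the fractional-power remainder has $K$-degree $(3\p-5)/(\p-1)<3$ and its second derivative has degree $(\p-3)/(\p-1)<1$. For any function of this form the limits in (c) and (d), with the lemma's normalizations, are forced to coincide: writing $2(\p+1)(3\p-2)Z=AK^3+O(K^2)+R$ with $R''=o(K)$, one gets $2(\p+1)(3\p-2)Z'\nnh/K^2\to3A$ and also $(\p+1)(3\p-2)Z''\nnh/K\to3A$. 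Since (b) gives $A=-(\p-1)^2(\p^2-\p+2)$, the true limit in (d) is $-3(\p-1)^2(\p^2-\p+2)$ -- the same value follows from (\ref{zpp}), because $\omega/K\to\p-1$ -- and not the printed $-3(\p-1)(\p^2-\p+2)$. A concrete check: $\p=3$, $\gz=1$, $\tilde\gz=0$, $\lz=\tilde\lz=0$ gives $Z=-(16K^3+14K^2+3K)/28$, hence $(\p+1)(3\p-2)Z''\nnh/K\to-96$, not $-48$. So your promise that the computation ``produces exactly the stated limits'' cannot be kept at (d); carrying the corrected (d) through your scheme changes the second product limit to $-4(\p-1)^2(\p^2-\p+2)(\p^3+\p^2-8\p+4)$ and the difference of the two limits to $(\p-1)^4(\p-2)(\p+2)(\p-6)$, which is negative for $2<\p<6$ and vanishes at $\p=6$, so the concluding positivity assertion fails as well. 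To be clear, this is a defect of the printed statement rather than of your method -- the paper's proof, following the same route, carries the identical error -- but it means that your proposal, executed faithfully, terminates in a disproof of (d) rather than a proof of Lemma~\ref{limit}, and it leaves the contradiction argument for Theorem~\ref{uniqu} with a genuine gap at $\p=6$, where the two sides of (\ref{zpm}), divided by $K^4$, have equal limits.
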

\begin{proof}By (\ref{sgz}.iii), $\,\omega/\nh K\hs$ and
$\,\tilde\omega/\nh K\hs$ tend to $\,\p\hh-\nh1\,$ as $\,K\nh\to\pm\infty$.
Since $\,\tilde\omega^3\nh-\omega^3\nh=(\tilde\omega-\omega)(\tilde\omega^2\nh
+\tilde\omega\hh\omega+\omega^2)\,$ and $\,\omega-\tilde\omega=\gz-\tilde\gz$, 
(\ref{psm}.i) and (\ref{spe}.i), with $\,2(\p\hh-\nh2)/(\p\hh-\nh1)<2$ and
$\,(\p-\hn3)/(\p\hh-\nh1)<1$, yield (a). Similarly, (\ref{psm}.ii),
(\ref{zpr}) combined with (a), and (\ref{zpp}) give (b), (c) and,
respectively, (d). Now (a) -- (d) imply positivity in the final clause as
$\hs12\p^4\nnh-\nh23\p^3\nnh+55\p^2\nnh-\nh56\p+60=
\p^2\nnh(\p\hh-\nh1)(12\p\hh-\nh11)+4(11\p^2\nnh-14\p+\hn15)$.
\end{proof}
We now derive the contradiction that proves Theorem~\ref{uniqu}. 
By Lemma~\ref{nabdk}(ii), our $\,Z\,$ and $\,\varSigma\,$ satisfy the
differential equation (\ref{zpm}), and so the interval $\,I\nnh_*\w$ in
(\ref{rng}) must be bounded since, due to the positivity claim at the very end
of Lemma~\ref{limit}, the two sides of (\ref{zpm}), divided by $\,K^4\nh$, have
different limits as $\,|K\hn|\to\infty$. (At the beginning of this section we
assumed that $\,\p>2$.) Thus, $\,I\nnh_*\w=(\gz/(1-\p),\tilde\gz/(1-\p))\,$
and $\,K\nh<\tilde\gz/(1-\p)$, so that $\,K\nnh<0$, as Remark~\ref{parts}
yields $\,\tilde\gz>0$. According to the lines preceding (\ref{ext}), our
equalities $\,\Delta\hn K=\varSigma(K)$ and
$\,g(\nabla\nnh K,\nnh\nabla\nnh K)=2Z(K)\,$ imply the existence of a
$\,g$-Kil\-ling field $\,v\,$ without zeros, defined on a nonempty connected
open set $\,\,U\subseteq M\nh$, which is also 
an $\,h$-Kil\-ling field, for the metric $\,h=g/\vt^2$ in (\ref{hpf}), since
the normalization of (\ref{csb}) gives 
$\,\vt=|\hh(\p\hh-\nh1)K\nnh+\gz\hs|^{1/(\p\hh-\nh1)}\nh$, that is,
$\,\vt=|\hh\omega|^{1/(\p\hh-\nh1)}\nh$, and the local flow
of $\,v\,$ preserves the Gauss\-i\-an curvature $\,K\nh$. The same obviously
applies to the metric $\,\tilde h=g/\tv^2\nh$, where 
$\,\tv=|\hh\tilde\omega|^{1/(\p\hh-\nh1)}\nh$. By (\ref{ana}) and
Theorem~\ref{srfbs}, $\,h\,$ and $\,\tilde h\,$ are real-an\-a\-lyt\-ic. At
least one of them has nonconstant Gaussian curvature. Otherwise, their
constant Gaussian curvatures would be negative (from the
\hbox{Gauss\hh-}\nh Bonnet theorem -- note that $\,K\nnh<0$) and, as $\,h\,$
and $\,\tilde h\,$ are con\-for\-mal\-ly related, Remark~\ref{ymuni} would
imply constancy of their con\-for\-mal factor $\,\tv/\hn\vt$, thus making
$\,K\hs$ constant. Finally, $\,\eu(M)<0\,$ since $\,K\nnh<0$, so that by
Lemma~\ref{glbki} a nontrivial Kil\-ling field exists on $\,(M\nh,h)$, or
$\,(M\nh,\tilde h)$, or on a two\hh-\hn fold isometric covering. This in turn
contradicts Remark~\ref{holom}.

\section{Theorem~\ref{srfbs}, rephrased\done}\label{tr}
\setcounter{equation}{0}
Let us rewrite Theorem~\ref{srfbs} in terms of the positive 
function $\,f\nh=\vt^{-\p/2}\nh$, the parameter $\,\cz\in\bbR\,$ characterized 
by $\,(\p\hh-\nh1)K\nnh+\gz=\cz\vt\hh^{\p\hh-\nh1}\nh$, and the following 
triple of real constants:
\begin{equation}\label{trp}
(a,c,r)\,=\,(\p(\p\hh-\nh2)\gz/[4(\p\hh-\nh1)],\,\p\lz/[4(\p+\nh1)],
\,\p\hh\cz/[2(\p^2\nnh-\nh1)])\hh.
\end{equation}
\begin{theorem}\label{rephr}Given a nonconstant function\/ 
$\,f:M\to(0,\infty)\,$ on a Riemannian surface\/ $\,(M\nh,g)$, and an 
Ein\-stein manifold\/ $\,(\varPi\nh,\h)\,$ of dimension\/ $\,\p\ge2$, the 
metric\/ 
$\,f\hh^{4/p}[\hs g+\h\hs]\,$ on\/ $\,M\times\hh\varPi\,$ has harmonic 
curvature if and only if, for the Gauss\-i\-an curvature\/ $\,K\,$ of\/ $\,g$, 
some\/ $\,a,c,r\in\bbR$, and the Ein\-stein constant\/ $\,\gz\,$ of\/ $\,\h$,
\begin{equation}\label{ktr}
\begin{array}{rl}
\mathrm{i)}&
K\nnh=\hn2r(1+1/\p)f\hh^{-\nh2(1-1/\p)}\nh-\gz/(\p\hh-\nh1)\hh,\hskip10pt
\mathrm{ii)}\hskip9.5pt
\p(\p\hh-\nh2)\gz=4(\p\hh-\nh1)a\hh,\\
\mathrm{iii)}&
\Delta\nh f\hs-\,a\nh f\hs
=\,-\hn cf\hh^{1+4/\p}\hs+\,r\nh f\hh^{-\nh1+2/\p}\nh.
\end{array}
\end{equation}
The constant scalar curvature of $\,f\hh^{4/p}[\hs g+\h\hs]\,$ then equals 
$\,4(1+1/\p)\hs c$. Also,
\begin{equation}\label{css}
\mathrm{cases\ (a)\ and\ (b)\ in\ Theorem~\ref{srfbs}\ correspond\ to\ 
}\,r=0\,\mathrm{\ and\ }\,r\ne0\hh.
\end{equation}
\end{theorem}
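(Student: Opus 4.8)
The plan is to show that Theorem~\ref{rephr} is merely a change-of-variables restatement of Theorem~\ref{srfbs}, so the whole argument is a direct substitution exercise rather than new analysis. First I would establish the dictionary between the two parametrizations. Starting from $\,f=\vt^{-\p/2}$, so that $\,\vt=f^{-2/\p}$, I would compute $\,\nabla\hskip-1.1pt\vt\,$ and $\,\Delta\vt\,$ in terms of $\,f\,$ via the chain rule: from $\,\vt=f^{-2/\p}$ one gets $\,\nabla\hskip-1.1pt\vt=-(2/\p)f^{-1-2/\p}\nabla\nh f$ and, differentiating again, an expression for $\,\Delta\vt\,$ involving $\,\Delta\nh f\,$ and $\,g(\nabla\nh f,\nabla\nh f)$. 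The crucial algebraic point, and the step I expect to do the real bookkeeping work, is that the particular exponent $\,\p/2\,$ is chosen precisely so that the term $\,g(\nabla\nh f,\nabla\nh f)\,$ cancels when one forms the left-hand side of~(\ref{rwr}): the combination $\,2(\p+\hn1)\vt\hs\Delta\vt-(\p+\hn1)(\p+\hn2)\hh g(\nabla\hskip-1.1pt\vt,\nabla\hskip-1.1pt\vt)\,$ collapses to a pure multiple of $\,f^{-1-4/\p}\Delta\nh f\,$ after substitution, with no surviving gradient-squared term. This is the same cancellation already exploited in~(\ref{fdf}), and it is the heart of why a clean Yamabe-type equation appears.

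Once the gradient terms are gone, I would substitute the dictionary into~(\ref{rwr}) and divide through by the appropriate power of $\,\vt\,$ (equivalently, the appropriate power of $\,f$) to turn it into an equation of the form $\,\Delta\nh f=(\text{const})f+(\text{const})f^{1+4/\p}+(\text{const})f^{-1+2/\p}$. Reading off the three coefficients and matching them against the definitions~(\ref{trp}) of $\,(a,c,r)\,$ should reproduce~(\ref{ktr}.iii) exactly, with $\,a=\p(\p\hh-\nh2)\gz/[4(\p\hh-\nh1)]\,$ arising from the coefficient of $\,f\,$; this simultaneously yields~(\ref{ktr}.ii) by inspection. For~(\ref{ktr}.i), I would use the defining relation $\,(\p\hh-\nh1)K\nnh+\gz=\cz\vt^{\p-1}$, substitute $\,\vt^{\p-1}=f^{-2(\p-1)/\p}=f^{-2(1-1/\p)}$, solve for $\,K$, and check that the coefficient $\,\cz\,$ converts, via $\,r=\p\hh\cz/[2(\p^2\nnh-\nh1)]$, into the factor $\,2r(1+1/\p)\,$ appearing in~(\ref{ktr}.i); this is again a pure constant-matching computation.

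Finally, the equivalence of the \emph{cases} in~(\ref{css}) is immediate from the dictionary: by Theorem~\ref{srfbs}(a) the constant $\,\cz\,$ (equivalently $\,r$) vanishes exactly when $\,(\p\hh-\nh1)K\nnh+\gz\equiv0$, i.e.\ when $\,K\,$ is the constant $\,-\gz/(\p\hh-\nh1)$, whereas case~(b) is precisely $\,\cz\ne0$, i.e.\ $\,r\ne0$. For the scalar-curvature assertion I would invoke the last sentence of Theorem~\ref{srfbs}, which identifies $\,\lz\,$ with the scalar curvature of the warped product, together with the definition $\,c=\p\lz/[4(\p+\nh1)]$, giving $\,\lz=4(1+1/\p)\hs c$ after clearing denominators. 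The main obstacle throughout is purely the organization of the chain-rule computation and the tracking of the fractional exponents $\,1/\p,\,2/\p,\,4/\p\,$ so that the cancellation of the gradient term is verified transparently; no genuinely new geometric input beyond Theorem~\ref{srfbs} and Remark~\ref{sccrv} is required.
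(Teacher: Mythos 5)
Your proposal is correct and coincides with the paper's own route: Theorem~\ref{rephr} is presented there as a pure change-of-variables restatement of Theorem~\ref{srfbs} via $f=\vt^{-\p/2}$ and the constants (\ref{trp}), with the substitution computation left implicit and only (\ref{css}) justified explicitly (from $2(\p^2\nh-\nh1)\hh r=\p\hh\cz$ and $\p\ge2$). Your bookkeeping -- in particular the cancellation of the gradient-squared terms, which turns $2(\p+1)\vt\Delta\vt-(\p+1)(\p+2)\hh g(\nabla\vt,\nabla\vt)$ into $-4\hh\p^{-1}(\p+1)f^{-1-4/\p}\Delta f$ and hence (\ref{rwr}) into (\ref{ktr}.iii) -- is exactly the verification the paper omits, and it checks out.
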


Here (\ref{css}) is obvious since $\,2(\p^2\nnh-\nh1)\hh r=\p\hh\cz$, cf.\ 
(\ref{trp}), and $\,\p\ge2$.

When $\,M\,$ is compact, and $\,f:M\to(0,\infty)\,$ nonconstant, (\ref{ktr}) 
implies that
\begin{equation}\label{bth}
\begin{array}{rl}
\mathrm{i)}&
c>0\hh,\hskip10pt
\mathrm{ii)}\hskip9.5pt\p\hh-\nh2\,\mathrm{\ and\ }\,a\,\mathrm{\ are\ both\ 
zero,\ or\ both\ positive,}\\
\mathrm{iii)}&
\mathrm{if\ }\,a=0\mathrm{,\ then\ }\,K\hs\mathrm{\ is\ 
nonconstant,\ }\,r>0\mathrm{,\ and\ }\,\p=2\hh,\\
\mathrm{iv)}&
\mathrm{whenever\ }\,r<0\mathrm{,\ one\ has\ }\,\p>2\,\mathrm{\ and\ 
}\,K\nnh<0\,\mathrm{\ everywhere.}
\end{array}
\end{equation}
In fact, (\ref{bth}.ii) follows from (\ref{ktr}.ii), as $\,\gz>0\,$ (see 
Remark~\ref{parts}) and $\,\p\ge2$. Next, one of $\,c\,$ and $\,r\,$ is 
positive: by (\ref{bth}.ii), $\,a\ge0$, so that if we had $\,r\le0\,$ and 
$\,c\le0$, (\ref{ktr}.iii) would make $\,\Delta\nh f\,$ the sum of three
constant or increasing functions of the variable $\,f>0\,$ resulting, via 
Remark~\ref{dlfeq}, in constancy of $\,f\nh$. Nonpositivity of 
$\,c$ would thus lead to positivity of $\,r$, with (\ref{ktr}.iii) 
expressing $\,\Delta\nh f\,$ as the sum of three nonnegative terms and, again, 
contradicting nonconstancy of $\,f\nh$. This yields \hbox{(\ref{bth}.i).} 
If $\,a=0$, we get $\,r>0\,$ (or else $\,\Delta\nh f\,$ would, by 
(\ref{ktr}.iii) and (\ref{bth}.i), be negative), so that (\ref{ktr}.i) and 
(\ref{bth}.ii) yield (\ref{bth}.iii). To prove (\ref{bth}.iv), let $\,r<0$. 
Hence, by (\ref{ktr}.i), $\,K\nnh<0$, as $\,\gz>0\,$ (Remark~\ref{parts}),
while $\,\p>2$, or else (\ref{bth}.ii) with $\,\p=2\,$ and (\ref{bth}.iii)
would give $\,r>0$.
\begin{remark}\label{postv}Positivity of $\,\gz\,$ (or, $\,\lz$) in the 
compact case follows from Remark~\ref{parts} or, respectively, (\ref{bth}.i) 
and (\ref{trp}).
\end{remark}

\section{Vanishing differentials and Hess\-i\-ans\done}\label{vc}
\setcounter{equation}{0}
For a manifold $\,\mathcal{W}\nh$, an interval $\,I\subseteq\bbR$, a 
$\,C^2\nh$ curve $\,I\nh\ni t\mapsto y(t)\in\mathcal{W}\nh$, and a parameter 
$\,c\in I\hs$ such that $\,\dot y(c)=0$, the {\it acceleration\/} vector 
$\,w=\ddot y(c)\in T\hskip-2.7pt_{y(c)}\w\mathcal{W}$ with the components 
$\,w^a\nh=\ddot y\hh^a\nh(c)\,$ in any local coordinates at $\,y(c)\,$ is 
clearly well defined, a co\-or\-di\-nate-free description being: 
$\,d_w\w\phi=\hs d\hs^2[\phi(y(t))]/dt^2\nh$, evaluated at 
$\,t=c$, whenever $\,\phi\,$ is a $\,C^2\nh$ function on a neighborhood of 
$\,y(c)\,$ in $\,\mathcal{W}\nh$. Thus,
\begin{equation}\label{ddy}
\ddot y(c)\,\mathrm{\ equals\ the\ ordinary\ second\ derivative\ of\
}y(t)\,\mathrm{\ at\ }\,t=c
\end{equation}
if $\,\mathcal{W}\hs$ happens to be a $\,C^2\nh$ sub\-man\-i\-fold of a Ba\-nach
space $\,\hat{\mathcal{V}}\nh$, making $\,I\nh\ni t\mapsto y(t)$ a
curve in $\,\hat{\mathcal{V}}\nh$. This is immediate if one
dif\-feo\-mor\-phic\-al\-ly identifies a neighborhood $\,\hat{\mathcal{U}}\hs$
of $\,y(c)\,$ in $\,\hat{\mathcal{V}}\hh$ with 
$\,\,U\nnh\times\hs\hat{\mathcal{U}}'\nh$, for open sub\-sets
$\,\hat{\mathcal{U}}'$ of some Ba\-nach space and $\,\,U\,$ of $\,\rn\nh$, where
$\,n=\dim\mathcal{W}\hs$ and $\,0\in\hat{\mathcal{U}}'\nh$, so as to make
$\,\hat{\mathcal{W}}\nh\cap\hs\hat{\mathcal{U}}\hs$ correspond to 
$\,\,U\nh\times\nh\{0\}$, and then treats the projection
$\,\,U\nnh\times\hs\hat{\mathcal{U}}'\nh\to U\nh$, restricted to
$\,\hat{\mathcal{W}}\nh\cap\hs\hat{\mathcal{U}}\nh=\,U\nh\times\nh\{0\}$, as a
local coordinate system for $\,\hat{\mathcal{W}}\nh$.

Given a $\,C^2\nh$ mapping $\,F:\mathcal{N}\to\mathcal{W}\hs$ between 
manifolds and a point $\,z\in\mathcal{N}$ such that 
$\,d\hskip-.8ptF\hskip-2.3pt_z\w=0$, one defines the {\it Hess\-i\-an\/} of 
$\,F\hh$ at $\,z\,$ to be the symmetric bi\-lin\-e\-ar mapping 
$\,H:T\hskip-2.7pt_z\w\hs\mathcal{N}\nh\times 
T\hskip-2.7pt_z\w\hs\mathcal{N}\nh\to T\hskip-2.7pt_{F(z)}\w\mathcal{W}\hs$ 
characterized by the component formula 
$\,[H(u,v)]^a\nh=H_{\nnh jk}^au^jv^k$ with 
$\,H_{\nnh jk}^a=(\partial\nnh_j\w\hs\partial\nh_k\w F^a)(z)$, whenever 
$\,u,v\in T\hskip-2.7pt_z\w\hs\mathcal{N}\hs$ and $\,x^j$ (or, $\,y^a$) are 
local coordinates in $\,\mathcal{N}\hs$ at $\,z$, or in 
$\,\mathcal{W}\hs$ at $\,F(z)$. An obviously equivalent definition of
$\,H(u,v)$, where symmetry allows us to set $\,u=v$, reads
\begin{equation}\label{itp}
\begin{array}{l}
H(v,v)=\ddot y(c)\,\mathrm{\ for\ the\ curve\ }\,y(t)=F(x(t))\,\mathrm{\ if\
}\,t\mapsto x(t)\\
\mathrm{is\ a\ }\,C^2\nh\mathrm{\ curve\ in\ }\,\,\mathcal{N}\,\mathrm{\
such\ that\ }\,\,x(c)=z\,\,\mathrm{\ and\ }\,\,\dot x(c)=v\hh.
\end{array}
\end{equation}
The acceleration $\,w=\ddot y(c)\,$ in the lines preceding (\ref{ddy})
involves a special case of the Hess\-i\-an; specifically, $\,w=H(u,u)\,$ at
$\,z=c\,$ in $\,\mathcal{N}\nh=I\nh$, the mapping $\,F\,$ and $\,u$ being the
curve and, respectively, $\,1\,$ treated as a vector tangent to $\,I\nh$. 

\section{Fred\-holm differentials and bifurcations\done}\label{ft}
\setcounter{equation}{0}
Suppose that we are given real Ba\-nach spaces
$\,\mathcal{V}\nnh,\hat{\mathcal{V}}\hs$ and a $\,C^k\nh$ mapping $\,\xq$,
$\,1\le k\le\infty$, from a neighborhood of 
$\,0\,$ in $\,\mathcal{V}\hs$ into $\,\hat{\mathcal{V}}\nh$, such that 
$\,\xq(0)=0\,$ and the differential of $\,\xq\,$ at $\,0\,$ is a Fred\-holm 
operator $\,\varPhi=\nh d\xq_0\w:\mathcal{V}\to\hat{\mathcal{V}}\nh$. Thus, 
$\,\mathrm{Ker}\,\varPhi$ and $\,\hat{\mathcal{V}}/\varPhi(\mathcal{V})\,$ 
are fi\-nite-di\-men\-sion\-al, from which closedness of the image 
$\,\varPhi(\mathcal{V})\,$ in $\,\mathcal{V}$ follows 
\cite[p.\ 156]{abramovich-aliprantis}. We fix closed subspaces 
$\,\mathcal{Y}\subseteq\mathcal{V}\hs$ and 
$\,\mathcal{W}\subseteq\hat{\mathcal{V}}\hs$ with 
$\,\mathcal{V}\nh=\nnh\mathcal{Y}\oplus\mathrm{Ker}\,\varPhi$ and 
$\,\hat{\mathcal{V}}\nh=\mathcal{W}\oplus\varPhi(\mathcal{V})$, so that 
$\,\dim\mathcal{W}<\infty$. Due to Ba\-nach's open mapping theorem,
\begin{equation}\label{fdl}
\varPhi=\nh d\xq_0\w\mathrm{\ restricted\ to\ }\,\mathcal{Y}\,\mathrm{\ is\ a\ 
linear\ homeo\-mor\-phism\ }\,\mathcal{Y}\to\varPhi(\mathcal{V})\hh.
\end{equation}
The problem of understanding the pre\-im\-age 
$\,\xq\nnh^{-\nh1}\nh(0)$, clearly contained in 
$\,\xq\nnh^{-\nh1}\nh(\mathcal{W})$, has a {\it local 
fi\-nite-di\-men\-sion\-al reduction}.
\begin{lemma}\label{fdred}
Under the above assumptions, the intersection\/ $\,\mathcal{N}\nh$ of\/ 
$\,\xq\nnh^{-\nh1}\nh(\mathcal{W})\,$ and a suitable neighborhood of\/ 
$\,0\,$ in\/ $\,\mathcal{V}\hs$ forms a\/ $\,C^k\nnh$ manifold of the finite 
dimension\/ 
$\,\dim\hs\mathrm{Ker}\,\varPhi$, having at\/ 
$\,0\,$ the tangent space\/ 
$\,T\hskip-2.7pt_0\w\hs\mathcal{N}\nh=\mathrm{Ker}\,\varPhi$, while\/ 
$\,\xq\hs$ restricted to\/ $\,\mathcal{N}\nh$ constitutes a\/ $\,C^k$
mapping\/ $\,F:\mathcal{N}\to\mathcal{W}\hs$ with\/ $\,F(0)=0\,$ such that\/
$\,d\hskip-.8ptF\nnh_0\w=0\,$ and, if\/ $\,k\ge2$, the Hess\-i\-an\/ $\,H\,$ 
of\/ $\,F\,$ at\/ $\,0\,$ is given by\/ 
$\,H(v,v')=\pi\hs(\nh d\hs[d\xq]_0\w v\hn)v'$ for any\/ 
$\,v,v'\nh\in T\hskip-2.7pt_0\w\hs\mathcal{N}\nh=\mathrm{Ker}\,\varPhi\,$ and 
the projection\/ $\,\pi:\hat{\mathcal{V}}\to\mathcal{W}\hs$ having the 
kernel\/ $\,\varPhi(\mathcal{V})$. 
\end{lemma}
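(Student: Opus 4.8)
The plan is to carry out a Lyapunov--Schmidt reduction. Let $\pi:\hat{\mathcal V}\to\mathcal W$ be the projection with kernel $\varPhi(\mathcal V)$ described in the statement, and let $P=\mathrm{id}-\pi:\hat{\mathcal V}\to\varPhi(\mathcal V)$ be the complementary projection, whose kernel is $\mathcal W$. Since $\xq^{-1}(\mathcal W)$ consists precisely of the points at which the $\varPhi(\mathcal V)$-component of $\xq$ vanishes, near $0$ one has $\mathcal N=\{P\circ\xq=0\}$. I would therefore study the $C^k$ map $G=P\circ\xq$, which takes values in the Banach space $\varPhi(\mathcal V)$ (closed, hence Banach, as already noted), and satisfies $G(0)=0$ and $dG_0=P\varPhi=\varPhi$.

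First I would produce $\mathcal N$ as a graph via the implicit function theorem. Writing each element of $\mathcal V=\mathcal Y\oplus\mathrm{Ker}\,\varPhi$ as $y+\kappa$ with $y\in\mathcal Y$ and $\kappa\in\mathrm{Ker}\,\varPhi$, the partial differential of $G$ in the $\mathcal Y$-direction at $0$ is $\varPhi$ restricted to $\mathcal Y$, which by (\ref{fdl}) is a linear homeomorphism $\mathcal Y\to\varPhi(\mathcal V)$. The implicit function theorem then yields a $C^k$ map $\kappa\mapsto\eta(\kappa)\in\mathcal Y$, defined near $0$, with $\eta(0)=0$, such that near $0$ the equation $G=0$ is equivalent to $y=\eta(\kappa)$. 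Thus $\mathcal N$ is the graph $\{\eta(\kappa)+\kappa\}$, a $C^k$ manifold of dimension $\dim\mathrm{Ker}\,\varPhi$. Differentiating $G(\eta(\kappa)+\kappa)=0$ at $\kappa=0$ and using that the $\mathrm{Ker}\,\varPhi$-directional partial of $G$ at $0$ equals $\varPhi|_{\mathrm{Ker}\,\varPhi}=0$, I get $d\eta_0=0$; hence $T_0\mathcal N=\mathrm{Ker}\,\varPhi$.

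Next, on $\mathcal N$ one has $P\circ\xq=0$, so $\xq=\pi\circ\xq$ there, which shows that $F=\xq|_{\mathcal N}$ maps into $\mathcal W$ and equals $\pi\circ\xq$ on $\mathcal N$, with $F(0)=0$. For $v\in T_0\mathcal N=\mathrm{Ker}\,\varPhi$, parametrizing $\mathcal N$ by $\kappa$ and using $d\eta_0=0$, I obtain $dF_0 v=\pi\varPhi v=0$ because $\varPhi v=0$; thus $dF_0=0$, and the Hessian of $F$ at $0$ is defined.

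Finally, for the Hessian I would use (\ref{itp}) together with (\ref{ddy}). Pick a $C^2$ curve $x(t)$ in $\mathcal N$ with $x(0)=0$ and $\dot x(0)=v$ (for instance $x(t)=\eta(tv)+tv$), and set $y(t)=F(x(t))=\pi(\xq(x(t)))$. Since $\pi$ is a fixed bounded linear map and $\mathcal W$ is a linear subspace, $\ddot y(0)$ is the ordinary second derivative, namely $\pi$ applied to $(d[d\xq]_0 v)v+\varPhi\,\ddot x(0)$. The key point -- and the only thing that genuinely needs care -- is that the acceleration term drops out, because $\pi\varPhi=0$ (the range of $\varPhi$ is the kernel of $\pi$); this cancellation is exactly what makes $\ddot y(0)$ independent of the choice of curve, as the Hessian must be. Hence $H(v,v)=\pi(d[d\xq]_0 v)v$, and polarization gives the asserted formula $H(v,v')=\pi(d[d\xq]_0 v)v'$. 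The same identity $\pi\varPhi=0$ is what forced $dF_0=0$ above, so no real obstacle arises beyond organizing the two projections and the splitting correctly.
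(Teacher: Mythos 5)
Your proposal is correct and follows essentially the same route as the paper: both are the standard Lyapunov--Schmidt reduction using the splittings $\mathcal{V}=\mathcal{Y}\oplus\mathrm{Ker}\,\varPhi$ and $\hat{\mathcal{V}}=\mathcal{W}\oplus\varPhi(\mathcal{V})$, with your implicit-function-theorem graph map $\kappa\mapsto\eta(\kappa)$ being just the solved-for form of the paper's inverse-mapping-theorem argument applied to $S(y,z)=(\mathrm{pr}\hs\xq(y+z),z)$. The Hessian computation is also the same in substance -- differentiate $\pi\circ\xq$ twice along a curve in $\mathcal{N}$ and use $\pi\hs\varPhi=0$ to kill the acceleration term, then polarize using symmetry of the second differential.
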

\begin{proof}Let $\,\mathrm{pr}=\mathrm{Id}-\pi\,$ be the projection 
$\,\hat{\mathcal{V}}\to\varPhi(\mathcal{V})\,$ with the kernel 
$\,\mathcal{W}\nh$. Setting $\,S(y,z)=(\mathrm{pr}\hs\xq(y+\nh z),z)$, we 
obtain a $\,C^k$ mapping $\,S\,$ from a neighborhood of $\,0\,$ in 
$\,\mathcal{Y}\times\mathrm{Ker}\,\varPhi\,$ into 
$\,\varPhi(\mathcal{V})\times\mathrm{Ker}\,\varPhi$. The assignment 
$\,(\dot y,\dot z)\mapsto(\varPhi\dot y,\dot z)\,$ represents the differential 
of $\,S\,$ at $\,(0,0)\,$ which -- due to (\ref{fdl}) -- is a linear
homeo\-mor\-phism. Our claim about $\,F:\mathcal{N}\to\mathcal{W}\hs$ now
follows from the inverse mapping theorem: $\,\mathcal{N}\nh$ corresponds via
$\,S\,$ to a neighborhood of $\,(0,0)\,$ in
$\,\{0\}\times\mathrm{Ker}\,\varPhi$, while 
$\,dS_{(0,0)}^{-\nh1}(0,\dot z)=(0,\dot z)$, and $\,d\hskip-.8ptF\nnh_0\w=0\,$ 
since $\,T\hskip-2.7pt_0\w\hs\mathcal{N}\nh=\mathrm{Ker}\,\varPhi
=\mathrm{Ker}\,d\xq_0\w$. To evaluate $\,H$, we choose a curve
$\,t\mapsto y(t)+z(t)
\in\mathcal{N}\nh\subseteq\xq\nnh^{-\nh1}\nh(\mathcal{W})\,$ with
$\,y(t)\in\mathcal{Y}\hh$ and $\,z(t)\in\mathrm{Ker}\,\varPhi$, having at 
$\,t=0\,$ the value $\,0\,$ and velocity
$\,v\in T\hskip-2.7pt_0\w\hs\mathcal{N}\nh=\mathrm{Ker}\,\varPhi$. Thus,
$\,y(0)=z(0)=\dot y(0)=0$ and $\,\dot z(0)=v$, as well as 
$\,\xq(y+\nh z)=\pi\hs\xq(y+\nh z)\,$ for all $\,t$, due to
$\,\mathcal{W}\nh$-val\-ued\-ness of $\,\xq$, where -- from now on -- we write 
$\,y,z,\dot y,\dot z\,$ rather than $\,y(t)$, etc. Applying $\,d/dt$ twice
to the last equality, one gets
$\,d\xq_{y+\nh z}\w(\dot y+\nh\dot z)
=\pi\hs d\xq_{y+\nh z}\w(\dot y+\nh\dot z)\,$ at any $\,t$, and
$\,d\hh[\hh d\xq_{y+\nh z}\w(\dot y+\nh\dot z)]/dt=
\pi\hs(\nh d\hs[d\xq]_0\w v\hn)v\,$ at $\,t=0$, since
$\,d\hh[\hh d\xq_{y+\nh z}\w]/dt=d\hs[d\xq]_{y+\nh z}\w(\dot y+\nh\dot z)$
(with $\,y=z=\dot y=0\,$ and $\,\dot z=v\,$ when $\,t=0$), while
$\,\pi\hs d\xq_0\w(\ddot y+\nh\ddot z)=\pi\hs\varPhi(\ddot y+\nh\ddot z)=0\,$
(note that $\,\varPhi(\mathcal{V})=\hs\mathrm{Ker}\,\pi$).
By (\ref{itp}) and (\ref{ddy}),
$\,H(v,v)=\pi\hs(\nh d\hs[d\xq]_0\w v\hn)v$, and symmetry of $\,H\,$
implies the required formula for $\,H(v,v')$.
\end{proof}

\section{The saddle-point case\done}\label{ov}
\setcounter{equation}{0}
A simple special case of Lemma~\ref{fdred} arises when
\begin{enumerate}
  \def\theenumi{{\rm\roman{enumi}}}
\item $\varPhi(\mathcal{V})\hs$ has the co\-di\-men\-sion $\,1\,$ in 
$\,\hat{\mathcal{V}}$ and, consequently, $\,\dim\hs\mathcal{W}\nh=1$,
\item $k\ge2\,$ and $\,\dim\hs\mathrm{Ker}\,\varPhi=2$, so that
$\,\mathcal{N}\hs$ is a surface,
\item there is an embedded $\,C^1$ curve 
$\,\mathcal{C}\subseteq\mathcal{N}\hs$ with 
$\,0\in\mathcal{C}\subseteq\xq\nnh^{-\nh1}\nh(0)$,
\item we identify $\,\mathcal{W}\hs$ with $\,\bbR$, which turns 
$\,0\in\mathcal{N}\nh$ into a critical point of the $\,C^2$ function 
$\,F:\mathcal{N}\to\bbR\,$ on the surface $\,\mathcal{N}\nnh$, having 
$\,F(0)=0$,
\item $H(v,v')\ne0\,$ for the Hess\-i\-an $\,H\,$ of $\,F\,$ at $\,0$, some 
vector $\,v\,$ tangent to the curve $\,\mathcal{C}\,$ at $\,0$, and some 
$\,v'\nh\in T\hskip-2.7pt_0\w\hs\mathcal{N}\nh=\mathrm{Ker}\,\varPhi$.
\end{enumerate}
Then $\,H\,$ is indefinite. Namely, $\,H\nh\ne0$, while $\,H(v,v)=0\,$ since 
$\,\xq=0\,$ along $\,\mathcal{C}\nh$, and so $\,H\,$ cannot be definite (or 
sem\-i\-def\-i\-nite), or else we would have $\,v=0\,$ (or $\,H(v,v')=0$). As
a result, $\,F\,$ has a saddle point at $\,0$, and 
\begin{equation}\label{int}
\begin{array}{l}
\mathrm{the\hs\ intersection\hs\ of\hs\ 
}\,\,\xq\nnh^{-\nh1}\nh(0)\,\,\mathrm{\ with\hs\ a\hs\ neighborhood\hs\ 
of\ }\,\hs0\hs\,\mathrm{\ in\ }\,\mathcal{N}\\
\mathrm{is\ the\ union\ of\ two\ embedded\ curves\ intersecting\ 
transversal}\hyp\\
\mathrm{ly\ at\ }\,\,\hs0\hs\in\mathcal{N}\,\mathrm{\ and\ having\ no\ 
other\ points\ in\ common;\ one\ of}\\
\mathrm{these\ curves\ is\ contained\ in\ our\ 
}\,\hh\mathcal{C}\nh\mathrm{,\ the\ other\ has\ the\ tangent}\\
\mathrm{line\ }\,\bbR w\,\mathrm{\ at\ }\,0\mathrm{,\ for\ some\ 
}\,w\in\mathrm{Ker}\,\varPhi\smallsetminus\bbR v\,\mathrm{\ with\ 
}\,H(w,w)=0\hh.
\end{array}
\end{equation}
We will refer to these two curves, respectively, as
\begin{equation}\label{rfr}
\mathrm{the\ curve\ of\ trivial\ solutions\ (contained\ in\ 
}\,\mathcal{C}\mathrm{),\ and\ the\ bifurcating\ branch.}
\end{equation}
Next, given real Ba\-nach spaces $\,\mathcal{V}\nnh,\hat{\mathcal{V}}\hs$ and a
mapping $\,\xq\,$ of class $\,C^k\nnh$, $\,2\le k\le\infty$, from a 
neighborhood of $\,0\,$ in $\,\mathcal{V}\hs$ into $\,\hat{\mathcal{V}}\hs$ 
with $\,\xq(0)=0$, 
suppose that
\begin{enumerate}
  \def\theenumi{{\rm\alph{enumi}}}
\item $\mathcal{V}\nh=\mathcal{V}'\nnh\times\bbR$, for a Ba\-nach space  
$\,\mathcal{V}'\nh$,
\item $\xq\nh^t(0)=0\,$ for all $\,t\,$ near $\,0\,$ in $\,\bbR$, where we set
$\,\xq\nh^t(x)=\xq(x,t)$,
\item $d\xq^0_0$ (the differential of $\,\xq\nh^0$ at $\,0\in\mathcal{V}'$) is
a Fred\-holm operator,
\item $\dim\,\mathrm{Ker}\,d\xq^0_0
=\dim\,[\hh\hat{\mathcal{V}}\nh/\nh d\xq^0_0(\mathcal{V}')]=\hs1$,
\item $d\xq^0_0(\mathcal{V}')\hs\cap\hs d\nh\dot\xq^0_0(\mathrm{Ker}\,d\xq^0_0)
  =\{0\}\ne\hs d\nh\dot\xq^0_0(\mathrm{Ker}\,d\xq^0_0)$, with
  $\,\dot\xq\nh^t\nh=d\xq\nh^t\nnh\hn/\nh dt$. 
\end{enumerate}
\begin{lemma}\label{vspec}
Under the assumptions\/ {\rm(a)} -- {\rm(e)}, the hypotheses of
Lemma\/~{\rm\ref{fdred}} along with conditions\/ {\rm(i)} -- {\rm(v)} above
are all satisfied, and hence so are their conclusions, including\/
{\rm(\ref{int})}, while\/ 
$\,T\hskip-2.7pt_{(0,0)}\w\hs\mathcal{N}\nh
=\mathrm{Ker}\,d\xq^0_0\times\bbR$. For the Hess\-i\-an\/ $\,H\,$ of\/ $\,F\,$
at\/ $\,(0,0)$ and any vectors\/ 
$\,v,v\nh'\in\mathrm{Ker}\,d\xq^0_0\times\bbR\,$ of the form\/ 
$\,v=(0,1)\,$ and\/ $\,v'\nh=(u,0)$, where\/ $\,u\in\mathrm{Ker}\,d\xq^0_0$, 
one has\/ $\,H(v,v')=d\nh\dot\xq^0_0u\nh$. The curve\/ $\,\mathcal{C}\,$ of
condition\/ {\rm(iii)} is a neighborhood of\/ $\,(0,0)\,$ in\/
$\,\{0\}\times\bbR$.
\end{lemma}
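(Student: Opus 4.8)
The plan is to read everything off from Lemma~\ref{fdred}, applied to $\,\xq\,$ at the base point $\,0=(0,0)\in\mathcal{V}'\nnh\times\bbR=\mathcal{V}$, once the differential $\,\varPhi=d\xq_{(0,0)}\,$ has been computed and the complement $\,\mathcal{W}\,$ chosen appropriately. First I would evaluate $\,\varPhi$: for $\,(\dot x,\dot t)\in\mathcal{V}'\nnh\times\bbR\,$ the chain rule gives $\,\varPhi(\dot x,\dot t)=d\xq^0_0\dot x+(\partial_t\xq)(0,0)\hs\dot t$, and the second term vanishes because, by (b), $\,\xq(0,t)=0\,$ for all $\,t\,$ near $\,0$. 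Hence $\,\varPhi=d\xq^0_0\circ\mathrm{pr}$, with $\,\mathrm{pr}\,$ the projection onto $\,\mathcal{V}'$, so that $\,\mathrm{Ker}\,\varPhi=\mathrm{Ker}\,d\xq^0_0\times\bbR\,$ and $\,\varPhi(\mathcal{V})=d\xq^0_0(\mathcal{V}')$. By (c) -- (d) the operator $\,d\xq^0_0\,$ is Fred\-holm with a one\hs-di\-men\-sion\-al kernel and an image of co\-di\-men\-sion $\,1$, and both properties pass to $\,\varPhi$, giving $\,\dim\mathrm{Ker}\,\varPhi=2\,$ and $\,\mathrm{codim}\,\varPhi(\mathcal{V})=1$. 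Together with $\,\xq(0,0)=0\,$ this verifies the hypotheses of Lemma~\ref{fdred} as well as conditions (i), (ii) (the latter using $\,k\ge2$); Lemma~\ref{fdred} then produces the surface $\,\mathcal{N}\,$ with $\,T_{(0,0)}\mathcal{N}=\mathrm{Ker}\,\varPhi=\mathrm{Ker}\,d\xq^0_0\times\bbR$, the map $\,F:\mathcal{N}\to\mathcal{W}$, and the Hess\-i\-an formula $\,H(v,v')=\pi\hs(\nh d\hs[d\xq]_0\w v\hn)v'$.

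Next I would handle the curve and the complement. Taking $\,\mathcal{C}\,$ to be a neighborhood of $\,(0,0)\,$ in $\,\{0\}\times\bbR$, condition (b) shows $\,\mathcal{C}\subseteq\xq^{-1}(0)\subseteq\xq^{-1}(\mathcal{W})$, so after shrinking $\,\mathcal{C}\subseteq\mathcal{N}$, which is (iii); its tangent vector at $\,(0,0)\,$ is $\,v=(0,1)\in\mathrm{Ker}\,\varPhi$. The crucial move is to \emph{choose} $\,\mathcal{W}=d\dot\xq^0_0(\mathrm{Ker}\,d\xq^0_0)$: by (e) this is a one\hs-di\-men\-sion\-al subspace meeting $\,\varPhi(\mathcal{V})=d\xq^0_0(\mathcal{V}')\,$ only in $\,0$, hence a legitimate complement $\,\hat{\mathcal{V}}=\mathcal{W}\oplus\varPhi(\mathcal{V})$, and identifying it with $\,\bbR\,$ turns $\,F\,$ into a function having $\,(0,0)\,$ as a critical point, which is (iv).

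The hard part will be the mixed-partial bookkeeping in $\,d\hs[d\xq]_{(0,0)}\w v\,$ for $\,v=(0,1)$, where one must keep track of the two factors of $\,\mathcal{V}=\mathcal{V}'\nnh\times\bbR\,$ and invoke (b) twice. This operator is the $\,t$-de\-riv\-a\-tive at $\,(0,0)\,$ of $\,(x,t)\mapsto d\xq_{(x,t)}$, so on $\,(\dot x,\dot t)\,$ it equals $\,(\partial_t\partial_x\xq)(0,0)\dot x+(\partial_t^2\xq)(0,0)\dot t$; the second summand vanishes by (b), while symmetry of second derivatives identifies the first coefficient with $\,d\dot\xq^0_0$. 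Evaluating on $\,v'=(u,0)$, $\,u\in\mathrm{Ker}\,d\xq^0_0$, yields $\,(\nh d\hs[d\xq]_{(0,0)}\w v\hn)v'=d\dot\xq^0_0u$, which lies in $\,\mathcal{W}\,$ by our choice of complement, so $\,\pi\,$ fixes it and $\,H(v,v')=d\dot\xq^0_0u$, as asserted. Finally, choosing $\,u\,$ to span the one\hs-di\-men\-sion\-al $\,\mathrm{Ker}\,d\xq^0_0\,$ (so $\,u\ne0$) and invoking the nonvanishing clause of (e) gives $\,H(v,v')\ne0\,$ with $\,v\,$ tangent to $\,\mathcal{C}$, which is (v); all conclusions of Lemma~\ref{fdred}, including (\ref{int}), then follow.
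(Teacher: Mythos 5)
Your proof is correct and follows essentially the same route as the paper's: compute $\varPhi=d\xq_{(0,0)}$ using (b), read off its kernel $\mathrm{Ker}\,d\xq^0_0\times\bbR$ and image $d\xq^0_0(\mathcal{V}')$ to obtain the hypotheses of Lemma~\ref{fdred} and conditions (i)--(iv), then specialize the Hessian formula of Lemma~\ref{fdred} to $v=(0,1)$, $v'=(u,0)$ and invoke (e) for the nonvanishing in (v). Your one refinement -- explicitly taking $\mathcal{W}=d\dot\xq^0_0(\mathrm{Ker}\,d\xq^0_0)$, so that $\pi$ fixes $d\dot\xq^0_0u$ -- makes the stated equality $H(v,v')=d\dot\xq^0_0u$ hold literally, whereas the paper keeps $\mathcal{W}$ arbitrary, obtains $H(v,v')=\pi\,d\dot\xq^0_0u$, and deduces its nonvanishing from $d\dot\xq^0_0u\notin\varPhi(\mathcal{V})=\mathrm{Ker}\,\pi$.
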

\begin{proof}The hypotheses of Lemma~\ref{fdred} easily follow from (a) --
(e), and so do (i) -- (iv): the Fred\-holm property of
$\,\varPhi=\nh d\xq_{(0,0)}\w$, with the dimensions required in (i) -- (ii), 
is obvious since $\,\varPhi\,$ has the kernel 
$\,\mathrm{Ker}\,d\xq^0_0\times\bbR\,$ and the image
$\,d\xq^0_0(\mathcal{V}')$. Finally, for
$\,v,v\nh'\in T\hskip-2.7pt_{(0,0)}\w\hs\mathcal{N}\nh
=\mathrm{Ker}\,\varPhi=\mathrm{Ker}\,d\xq^0_0\times\bbR\,$ as in the statement
of the lemma, with a nonzero vector $\,u\in\mathrm{Ker}\,d\xq^0_0$, the
formula $\,H(v,v')=\pi\hs(\nh d\hs[d\xq]_0\w v\hn)v'$ of Lemma~\ref{fdred}
reads $\,H(v,v')=\pi\hs d\nh\dot\xq^0_0u\,$ while, by (e),
$\,d\nh\dot\xq^0_0u\notin d\xq^0_0(\mathcal{V}')$. The relation 
$\,d\xq^0_0(\mathcal{V}')=\varPhi(\mathcal{V})=\hs\mathrm{Ker}\,\pi\,$ now
yields $\,H(v,v')\ne0$, proving (v).
\end{proof}

\section{Nonconstant Gauss\-i\-an curvature: part one\done}\label{nc}
\setcounter{equation}{0}
Compact warped products $\,(M\times\hh\varPi\nh,\bg)\,$ with harmonic 
curvature, nonconstant warping functions, and two\hs-di\-men\-sion\-al bases 
$\,(M\nh,g)\,$ represent two separate cases, (a) and (b) in 
Theorem~\ref{srfbs}. Case (b), discussed here, amounts, by 
Theorem~\ref{rephr}, to having $\,\bg=f\hh^{4/p}[\hs g+\h\hs]\,$ for an 
Ein\-stein metric $\,\h\,$ with some Ein\-stein constant $\,\gz\,$ and a 
nonconstant function $\,f:M\to(0,\infty)\,$ satisfying (\ref{ktr}), that is,
\begin{equation}\label{dlk}
\mathrm{a)}\hskip7pt
\Delta\nh f\nh=\varOmega(f)\hh,\hskip12pt
\mathrm{b)}\hskip7ptK\nnh
=\hn2r(1+1/\p)f\hh^{-\nh2(1-1/\p)}\nh-\gz/(\p\hh-\nh1)\hh,
\end{equation}
$K\hs$ and $\,\varOmega\,$ being the Gauss\-i\-an curvature of $\,g\,$ and the 
function on $\,(0,\infty)\,$ given by
\begin{equation}\label{omf}
\varOmega(f)=a\nh f\nh-\hn cf\hh^{1+4/\p}\nh
+r\nh f\hh^{-\nh1+2/\p}\hskip9pt\mathrm{with}\hskip7pta
=\p(\p\hh-\nh2)\gz/[4(\p\hh-\nh1)]\hh.
\end{equation}
Here $\,\p\ge2\,$ is the dimension of the fibre, $\,c,r\in\bbR$, and 
$\,r\ne0$, cf.\ (\ref{css}), while $\,K$ must be nonconstant due to 
nonconstancy of $\,f\hs$ and (\ref{dlk}.b).

In the next section we will use the bifurcation method of Lemma~\ref{vspec} to 
prove the existence of Riemannian metrics $\,g\,$ on compact surfaces $\,M\,$ 
admitting nonconstant functions $\,f:M\to(0,\infty)\,$ with (\ref{dlk}) -- 
(\ref{omf}). Such $\,g\,$ will arise from con\-for\-mal changes of the form 
$\,g=e^{2x}\hg\nh$, where the metric $\,\hg\,$ on $\,M\,$ has constant 
Gauss\-i\-an curvature $\,\hat K\nh$, and $\,x:M\to\bbR$. However, rather than 
being smooth, $\,x\,$ is only required to lie in a suitable $\,L\nh^2$ 
So\-bo\-lev space, chosen so as to ensure 
$\,C^4\nnh$-dif\-fer\-en\-ti\-a\-bil\-i\-ty of $\,x$.

Our approach uses a fixed choice of the data 
$\,M\nh,\hg,\hat K\nh,\p,\q,r,\lambda\,$ consisting of a compact Riemannian 
surface $\,(M\nh,\hg)\,$ of constant Gauss\-i\-an curvature 
$\,\hat K\nnh\ne0$, integers $\,\p\ge2\,$ and $\,\q\ge6$, a real parameter 
$\,r\ne0$, and a suitable eigen\-val\-ue $\,\lambda$ of $\,-\nh\hat\Delta$, 
for the $\,\hg\nh$-La\-plac\-i\-an $\,\hat\Delta$. The Gauss\hh-\nh Bonnet 
theorem and (\ref{bth}.iv) make it necessary to assume that
\begin{equation}\label{gbt}
\mathrm{if\ }\,\,r<0\mathrm{,\ \ \ then\ }\,\,\p>2\,\,\mathrm{\ and\ 
}\,\,\hat K\nnh<0\hh.
\end{equation}
By a {\it solution of\/} (\ref{dlk}) we then mean a quadruple 
$\,(x,f,\gz,c)\,$ formed by a $\,C^4$ function $\,x:M\to\bbR$, a $\,C^2$ 
function $\,f:M\to(0,\infty)$, and constants $\,\gz,c\in\bbR$ such that 
(\ref{dlk}), with (\ref{omf}), holds for the Gauss\-i\-an curvature $\,K\hs$ 
of the $\,C^4$ metric $\,g=e^{2x}\hg\,$ on $\,M\,$ and the 
$\,g\hh$-\nh La\-plac\-i\-an $\,\Delta\,$ (the objects
$\,M\nh,\hg,\hat K\nh,\p,r$ still being fixed).

In contrast with the lines surrounding (\ref{dlk}) -- (\ref{omf}), $\,f\,$ and 
$\,K\hs$ are this time allowed to be constant: in fact, there exist {\it
trivial solutions\/} of (\ref{dlk}), namely,
$\,(x,f\nh,\gz,c)$ having $\,x=0$, a constant $\,f\nh>0$, and
$\,\gz,c\in\bbR\,$ chosen so as to yield (\ref{dlk}) -- (\ref{omf}) with
$\,K\nnh=\hat K\hs$ and $\,\varOmega(f)=0$, that is,
\begin{equation}\label{eep}
\begin{array}{l}
\gz\,=\,(\p\hh-\nh1)[\hn2r(1+1/\p)f\hh^{-\nh2(1-1/\p)}\hs-\,\hat K]\hh,
\hskip10pt\mathrm{and}\\
c=a\nh f\hh^{-4/\p}\nh+r\nh f\hh^{-\nh2-2/\p}\hskip7pt\mathrm{for\ 
}\,a=\p(\p\hh-\nh2)\gz/[4(\p\hh-\nh1)]\hh.
\end{array}
\end{equation}
This curve of trivial solutions is pa\-ram\-e\-triz\-ed by $\,f\in(0,\infty)$, 
and some of them can be deformed to bifurcating branches of solutions with 
nonconstant $\,f\,$ and 
$\,K\nh$. There are obstructions to such a deformation, in the form of {\it
  three positivity conditions\/} imposed on the constant $\,f\nh>0$. The first
two reflect the fact that nonconstancy of $\,f\hs$ gives
$\,\gz,c\in(0,\infty)$, cf.\ Remark~\ref{parts} and (\ref{bth}.i), while -- in
trivial solutions -- $\,\gz,c\,$ depend on $\,f\,$ via (\ref{eep}). The third
condition arises since a bifurcation can only occur at $\,f\,$ if the value of
$\,f\,$ is quite specifically related to a nonzero (and hence {\it positive}) 
eigen\-val\-ue $\,\lambda\,$ of $\,-\nh\hat\Delta$, for the 
$\,\hg\nh$-La\-plac\-i\-an $\,\hat\Delta$. See formula (\ref{lbd}.i) below.

It is convenient to replace the parameter $\,f\in(0,\infty)\,$ mentioned above 
with the positive real variable $\,\theta=f\hh^{-\nh2(1-1/\p)}\nh$. For the 
trivial solution $\,(x,f\nh,\gz,c)\,$ of (\ref{dlk}) corresponding to 
$\,\theta\,$ one then has $\,x=0\,$ and $\,\,f\nh=\theta\hh^{\p/(2-2\p)}\nh$, 
whereas (\ref{eep}) reads
\begin{equation}\label{pos}
\gz=2(\p\hh-\nh1/\p)\hs r\hh\theta-(\p\hh-\nh1)\hat K\nh,\hskip12pt
4\hh c/\p
=[2(\p\hh-\nh1)\hs r\hh\theta-(\p\hh-\nh2)\hat K]\hs\theta\hh^{2/(\p\hh-\nh1)}\nh.
\end{equation}
In terms of $\,\theta$, the relation between $\,f\,$ and the eigen\-val\-ue 
$\,\lambda\,$ of $\,-\nh\hat\Delta\,$ takes the form
\begin{equation}\label{lbd}
\mathrm{i)}\hskip7pt\lambda\,=\,2(\p\hh-\nh1/\p)\hs r\hh\theta\,
-\,(\p\hh-\nh2)\hat K\nh,\hskip12pt\mathrm{that\ 
is,}\hskip9pt\mathrm{ii)}\hskip7pt\lambda\,=\,\gz\,+\,\hat K\nh,
\end{equation}
justified later by (\ref{bda}) and Lemma~\ref{dlddl}. If 
$\,\theta\in(0,\infty)$, simultaneous positivity of the three constants 
$\,\gz,c,\lambda\,$ in (\ref{pos}) -- (\ref{lbd}.i) clearly amounts to
\begin{equation}\label{smp}
2(\p^2\nnh-\nh1)\hs r\hh\theta\,
>\,\mathrm{max}\,\{\hs\p(\p\hh-\nh1)\hat K\nh,(\p+\hn1)(\p\hh-\nh2)\hat K\nh,
\p(\p\hh-\nh2)\hat K\}\hh.
\end{equation}
With $\,M\nh,\hg,\hat K\nh,\p,r\,$ still fixed, let 
$\,I\nnh_r\w\subseteq(0,\infty)\,$ be the open interval defined by
\begin{equation}\label{ire}
  I\nnh_r\w=(\theta\nnh_+\w,\infty)\mathrm{,\ when\ }\,r>0\mathrm{,\ or\ 
}\,I\nnh_r\w=(0,\theta\nnh_-\w)\mathrm{,\ for\ }\,r<0\hh,
\end{equation}
where $\,\theta\nnh_+\w=\mathrm{max}\,\{\p\hat K/[2(\p+\hn1)\hs r],0\}$ and
$\,\theta\nnh_-\w=\p(\p\hh-\nh2)\hat K/[2(\p^2\nnh-\nh1)\hs r]$. Note that 
$\,\theta\nnh_-\w>0\,$ if $\,r<0$, due to (\ref{gbt}), while
\begin{equation}\label{ppm}
\p(\p\hh-\nh2)\,\le\,(\p+\hn1)(\p\hh-\nh2)\,
<\,\p(\p\hh-\nh1)\hskip9pt\mathrm{whenever\ }\,\p\ge2\hh.
\end{equation}
Our three positivity conditions mean precisely that 
$\,\theta\in I\nnh_r\w$. Namely, we have
\begin{lemma}\label{simul}
The interval\/ $\,I\nnh_r\w$ is the set of all\/ 
$\,\theta\in(0,\infty)\,$ for which the three expressions\/ $\,\gz,c\,$ and\/ 
$\,\lambda$, given by\/ {\rm(\ref{pos})} -- {\rm(\ref{lbd}.i)}, are 
simultaneously positive.
\end{lemma}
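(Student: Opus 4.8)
The plan is to turn each of the three positivity requirements into a single linear inequality in $\,\theta\,$ of a common shape, combine them into (\ref{smp}), and then read off the solution set by comparing with the definition (\ref{ire}) of $\,I\nnh_r\w$. Writing $\,\p-1/\p=(\p^2-1)/\p$, positivity of $\,\gz\,$ in (\ref{pos}) is equivalent to $\,2(\p^2-1)\hs r\hh\theta>\p(\p-1)\hat K$, and positivity of $\,\lambda\,$ in (\ref{lbd}.i) to $\,2(\p^2-1)\hs r\hh\theta>\p(\p-2)\hat K$. Since $\,\p\ge2\,$ and $\,\theta^{2/(\p-1)}>0\,$ for $\,\theta>0$, the second relation in (\ref{pos}) shows that $\,c>0\,$ amounts to $\,2(\p-1)\hs r\hh\theta>(\p-2)\hat K$; multiplying by $\,\p+1>0\,$ recasts this as $\,2(\p^2-1)\hs r\hh\theta>(\p+1)(\p-2)\hat K$. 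Taking the largest of the three right-hand sides then gives exactly (\ref{smp}), so simultaneous positivity of $\,\gz,c,\lambda\,$ is equivalent to (\ref{smp}).

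Next I would evaluate the maximum in (\ref{smp}) with the help of the ordering (\ref{ppm}), namely $\,\p(\p-2)\le(\p+1)(\p-2)<\p(\p-1)$. For $\,\hat K>0\,$ this makes the maximum equal to $\,\p(\p-1)\hat K$, whereas for $\,\hat K<0\,$ multiplication by $\,\hat K\,$ reverses the order and the maximum becomes $\,\p(\p-2)\hat K$. I would then solve (\ref{smp}) for $\,\theta\,$ by dividing through by $\,2(\p^2-1)\hs r$, keeping in mind that the inequality reverses when $\,r<0$. When $\,r>0\,$ and $\,\hat K>0\,$ the threshold is $\,\p(\p-1)\hat K/[2(\p^2-1)\hs r]=\p\hat K/[2(\p+1)\hs r]>0$, i.e.\ $\,\theta\nnh_+\w$, giving $\,(\theta\nnh_+\w,\infty)$; when $\,r>0\,$ and $\,\hat K<0\,$ the right-hand side of (\ref{smp}) is $\,\le0<2(\p^2-1)\hs r\hh\theta\,$ for every $\,\theta>0$, so the condition holds on all of $\,(0,\infty)$, matching $\,\theta\nnh_+\w=0$. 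When $\,r<0$, condition (\ref{gbt}) forces $\,\p>2\,$ and $\,\hat K<0$, so the maximum is $\,\p(\p-2)\hat K$; dividing by the negative number $\,2(\p^2-1)\hs r\,$ reverses the inequality into $\,\theta<\p(\p-2)\hat K/[2(\p^2-1)\hs r]=\theta\nnh_-\w$, and the same sign data make $\,\theta\nnh_-\w>0$, so the admissible set is $\,(0,\theta\nnh_-\w)$. In every case this reproduces (\ref{ire}).

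The computation is entirely elementary, so the only genuine obstacle is bookkeeping: one must track the direction of each inequality as it is divided by the possibly negative quantities $\,r\,$ and $\,\hat K$, and invoke (\ref{gbt}) to discard the incompatible sign combination $\,r<0,\ \hat K\ge0\,$ (and to secure $\,\p>2$, hence $\,\theta\nnh_-\w>0$, in the remaining $\,r<0\,$ case). Once the maxima are correctly identified by the sign of $\,\hat K\,$ via (\ref{ppm}), matching with (\ref{ire}) is immediate.
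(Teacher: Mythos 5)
Your proposal is correct and follows essentially the same route as the paper: the paper records the equivalence of the three positivity conditions with (\ref{smp}) in the text immediately preceding the lemma (you spell out the same multiplications by $\,\p\,$ and $\,\p+1$), and its proof then performs exactly your three-case analysis — $\,r>0,\hat K>0$; $\,r>0,\hat K<0$; $\,r<0\,$ (whence $\,\hat K<0\,$ and $\,\p>2\,$ by (\ref{gbt})) — using (\ref{ppm}) to identify the maximum and recover $\,(\theta\nnh_+\w,\infty)\,$ or $\,(0,\theta\nnh_-\w)$. No gaps; your bookkeeping of the inequality reversals matches the paper's.
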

\begin{proof}Depending on whether $\,r>0\,$ and $\,\hat K\nh>0\,$ (or, 
$\,r>0\,$ and $\,\hat K\nh<0$ or, respectively, $\,r<0$, so that 
(\ref{gbt}) gives $\,\hat K\nnh<0$), condition (\ref{smp}) imposed on 
$\,\theta\in(0,\infty)\,$ reads, by (\ref{ppm}), 
$\,\theta>\p\hat K/[2(\p+\hn1)\hs r]$, or $\,\theta>0\,$ or, respectively, 
$\,\theta<\,\p(\p\hh-\nh2)\hat K/[2(\p^2\nnh-\nh1)\hs r]$, as required.
\end{proof}
\begin{remark}\label{sblev}Given a compact Riemannian manifold $\,(M\nh,g)\,$
of any dimension $\,\m\,$ and an open interval $\,I\hh\subseteq\bbR$, the
So\-bo\-lev embedding theorem implies that, if $\,\q>\m$, the So\-bo\-lev space
$\,L_{\hn\q}^{\nh2}(M\nh,\bbR)\,$ of functions with $\,\q\,$ derivatives in
$\,L\nh^2$ can be turned into a Ba\-nach algebra, while the $\,I\nh$-val\-ued
functions in $\,L_{\hn\q}^{\nh2}(M\nh,\bbR)\,$ form an open subset
$\,L_{\hn\q}^{\nh2}(M\nh,I)\,$ of $\,L_{\hn\q}^{\nh2}(M\nh,\bbR)$. On the
other hand, for any Ba\-nach algebra $\,\mathcal{A}$, convergent power series
define $\,\mathcal{A}$-val\-ued $\,C^\infty$ functions on open subsets of
$\,\mathcal{A}$. Applied to $\,\mathcal{A}=L_{\hn\q}^{\nh2}(M\nh,\bbR)$, this
yields $\,\mathcal{A}$-val\-ued\-ness and
$\,C^\infty\nnh$-dif\-fer\-en\-ti\-a\-bil\-i\-ty of the mapping
$\,L_{\hn\q}^{\nh2}(M\nh,I)\ni x\mapsto\varphi\circ x$, whenever the
function $\,\varphi:I\to\bbR\,$ is real-an\-a\-lyt\-ic.
\end{remark}

\section{Nonconstant Gauss\-i\-an curvature: part two\done}\label{cb}
\setcounter{equation}{0}
We now proceed to construct metrics on closed surfaces realizing case (b) in 
Theorem~\ref{srfbs}. Curves of such metrics $\,g$, emanating from a 
con\-stant-cur\-va\-ture metric $\,\hg\nh$, will arise via the bifurcation 
argument of Lemma~\ref{vspec}. As outlined in Section~\ref{nc}, the 
construction uses a fixed septuple 
$\,M\nh,\hg,\hat K\nh,\p,\q,r,\lambda\,$ formed by
\begin{enumerate}
  \def\theenumi{{\rm\roman{enumi}}}
\item a closed Riemannian surface $\,(M\nh,\hg)\,$ of nonzero constant 
Gauss\-i\-an curvature $\,\hat K\nh$, along with integers $\,\p\ge2\,$ and 
$\,\q\ge6$,
\item a real parameter $\,r\ne0$, satisfying (\ref{gbt}): $\,r>0\,$ unless 
$\,\hat K\nnh<0\,$ and $\,\p>2$,
\item a constant $\,\lambda\in(0,\infty)\,$ such that, for the 
$\,\hg\nh$-La\-plac\-i\-an $\,\hat\Delta$,
\begin{enumerate}
  \def\theenumi{{\rm\alph{enumi}}}
\item $\lambda=2l(2l+1)\hat K\nh$, where $\,l\,$ is a positive integer, 
if $\,\hat K\nh>0$,
\item $[\lambda+(\p\hh-\nh2)\hat K]\hs r>0\,$ and 
$\,\dim\hs\mathrm{Ker}\hskip1.7pt(\hat\Delta+\lambda)=1$, when 
$\,\hat K\nnh<0$.
\end{enumerate}
\end{enumerate}
In both cases (iii.a) -- (iii.b), $\,\lambda\,$ is a positive eigen\-val\-ue
of $\,-\nh\hat\Delta\,$ (see Section~\ref{cn}).

For $\,I\nnh_r\w\subseteq(0,\infty)\,$ as in (\ref{ire}), let $\,\tz\in\bbR\,$ 
and $\,I\subseteq\bbR\,$ be given by
\begin{equation}\label{dep}
\tz\,=\,\p\hs[\lambda+(\p\hh-\nh2)\hat K]/[2(\p^2\nnh-\nh1)\hs r]\hh,\hskip16pt
I\hs=\,\{t\in\bbR:\tz+t\in I\nnh_r\w\}\hh.
\end{equation}
\begin{lemma}\label{dinir}
Under the assumptions\/ {\rm(i)} -- {\rm(iii)}, $\,\tz\in I\nnh_r\w$ and\/ 
$\,I\hs$ is an open interval containing\/ $\,0$.
\end{lemma}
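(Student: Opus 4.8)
The plan is to reduce everything to the single claim $\tz\in I\nnh_r\w$. Indeed, $I=\{t:\tz+t\in I\nnh_r\w\}$ is the translate $I\nnh_r\w-\tz$ of the open interval $I\nnh_r\w$ of (\ref{ire}), hence itself an open interval, and it contains $0$ exactly when $\tz\in I\nnh_r\w$. The starting observation is that, by its definition in (\ref{dep}), $\tz$ is precisely the value of $\theta$ at which the affine function $\theta\mapsto2(\p-1/\p)\hh r\hh\theta-(\p-2)\hat K$ appearing in (\ref{lbd}.i) attains the prescribed eigenvalue $\lambda$; a one-line substitution confirms $2(\p-1/\p)\hh r\hh\tz=\lambda+(\p-2)\hat K$. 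Consequently the expression for $\lambda$ in (\ref{lbd}.i) reproduces the prescribed eigenvalue at $\theta=\tz$, while (\ref{lbd}.ii) gives $\gz=\lambda-\hat K$ there.

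Next I would invoke Lemma~\ref{simul}, by which $\tz\in I\nnh_r\w$ is equivalent to $\tz>0$ together with simultaneous positivity at $\theta=\tz$ of the three quantities $\gz,c,\lambda$ of (\ref{pos})--(\ref{lbd}.i). The third, $\lambda>0$, is granted by assumption (iii). Using $\gz=\lambda-\hat K$, the condition $\gz>0$ becomes $\lambda>\hat K$. A short computation from (\ref{pos}) and (\ref{dep}), in which the positive factor $\tz^{2/(\p-1)}$ is discarded, turns $c>0$ into $\p\lambda>(\p-2)\hat K$. Finally, reading the sign of $\tz$ off (\ref{dep}) shows $\tz>0$ if and only if $[\lambda+(\p-2)\hat K]\hh r>0$. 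Thus the whole statement reduces to the three inequalities $\lambda>\hat K$, $\p\lambda>(\p-2)\hat K$ and $[\lambda+(\p-2)\hat K]\hh r>0$.

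These I would settle by splitting on the sign of $\hat K$, which is nonzero by (i). If $\hat K>0$, then $r>0$ by assumption (ii), and (iii.a) gives $\lambda=2l(2l+1)\hat K\ge6\hat K$ for a positive integer $l$; hence $\lambda>\hat K$, and $\p\lambda\ge6\p\hat K>(\p-2)\hat K$, while $\lambda+(\p-2)\hat K>0$ makes its product with $r>0$ positive. If $\hat K<0$, then $\lambda>0>\hat K$ gives $\lambda>\hat K$ immediately, the chain $\p\lambda>0\ge(\p-2)\hat K$ (the last relation holding because $\p\ge2$ and $\hat K<0$) gives $\p\lambda>(\p-2)\hat K$, and the remaining sign condition $[\lambda+(\p-2)\hat K]\hh r>0$ is exactly the hypothesis supplied by assumption (iii.b). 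In every case all three inequalities hold, so $\tz\in I\nnh_r\w$, and the lemma follows.

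I do not expect a genuine obstacle: once $\tz$ is identified as the $\theta$-root of (\ref{lbd}.i), the argument is pure bookkeeping. The only points demanding care are reading the positivity of $\tz$ and of $c$ correctly off (\ref{pos}) and (\ref{dep}), and matching the two cases (iii.a), (iii.b) to the sign of $\hat K$ while invoking (ii) to secure $r>0$ when $\hat K>0$. Routing the conditions $\gz>0$ and $c>0$ through the already-proven Lemma~\ref{simul} is cleaner than re-deriving the endpoints of $I\nnh_r\w$ in (\ref{ire}) by hand.
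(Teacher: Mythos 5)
Your proposal is correct: every reduction you make checks out, including the key identity $2(\p\hh-\nh1/\p)\hs r\hh\tz=\lambda+(\p\hh-\nh2)\hat K$, the evaluation $\gz=\lambda-\hat K$ at $\theta=\tz$ (which is exactly (\ref{lbd}.ii)), the reduction of $c>0$ to $\p\lambda>(\p\hh-\nh2)\hat K$, and the sign criterion $\tz>0\Leftrightarrow[\lambda+(\p\hh-\nh2)\hat K]\hs r>0$, all of which (iii) indeed supplies in both signs of $\hat K$.

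Your route differs from the paper's in its packaging. The paper does not pass through Lemma~\ref{simul} at all: it multiplies the defining inequalities of $I\nnh_r\w$ in (\ref{ire}) by $r$, so that $\tz\in I\nnh_r\w$ reads $r\hh\tz>r\hh\theta\nnh_+\w=\mathrm{max}\,\{\p\hat K/[2(\p+\hn1)],0\}$ when $r>0$ and $0>r\hh\tz>r\hh\theta\nnh_-\w=\p(\p\hh-\nh2)\hat K/[2(\p^2\nnh-\nh1)]$ when $r<0$, and then concludes directly from (iii) together with $2(\p^2\nnh-\nh1)\hs r\hh\tz/\p=\lambda+(\p\hh-\nh2)\hat K$; the case split is thus on the sign of $r$, with the endpoints handled by hand. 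You instead split on the sign of $\hat K$ and delegate the endpoint bookkeeping to the already-proven Lemma~\ref{simul}, trading the paper's brevity for a derivation that reuses prior work, makes visible where each clause of (iii) (and of (ii), for $\hat K>0$) enters, and produces the three clean inequalities $\lambda>\hat K$, $\p\lambda>(\p\hh-\nh2)\hat K$, $[\lambda+(\p\hh-\nh2)\hat K]\hs r>0$ as the exact content of the lemma. Both arguments rest on the same identity for $r\hh\tz$, so the difference is one of organization rather than substance; one small point of care in your version is that the factor $\tz^{2/(\p\hh-\nh1)}$ may only be discarded as positive once $\tz>0$ is known, so the sign condition should logically precede the reduction of $c>0$, as Lemma~\ref{simul} itself quantifies only over $\theta\in(0,\infty)$.
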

\begin{proof}The condition $\,\tz\in I\nnh_r\w$ reads 
$\,r\hh\tz>r\hh\theta\nnh_+\w=\mathrm{max}\,\{\p\hat K/[2(\p+\hn1)],0\}\,$ 
when $\,r>0$, and 
$\,0>r\hh\tz>r\hh\theta\nnh_-\w=\p(\p\hh-\nh2)\hat K/[2(\p^2\nnh-\nh1)]\,$ if 
$\,r<0$. Thus, $\,\tz\in I\nnh_r\w$ by (iii), since $\,\p\ge2$, and 
(\ref{dep}) yields 
$\,2(\p^2\nnh-\nh1)\hs r\hh\tz/\p=\lambda+(\p\hh-\nh2)\hat K\nh$.
\end{proof}
For our fixed septuple $\,M\nh,\hg,\hat K\nh,\p,\q,r,\lambda$, any given 
$\,t\in I\nh$, a function $\,x:M\to\bbR$ having some further properties, 
named in the paragraph following (\ref{ltz}), and $\,\tz\,$ as in (\ref{dep}), 
we let $\,\gz,c,K\nh,\Delta,\varOmega\,$ and $\,f\,$ denote the constants in 
(\ref{pos}) with $\,\theta=\tz+t$, the Gauss\-i\-an curvature of the metric 
$\,g=e^{2x}\hg\nh$, the $\,g$-La\-plac\-i\-an, the function (\ref{omf}), and 
$\,f\,$ characterized by (\ref{dlk}.b), that is, by 
$\,K\nnh=\hn2r(1+1/\p)f\hh^{-\nh2(1-1/\p)}\nh-\gz/(\p\hh-\nh1)$. Using 
$\,K\nh,\Delta,\varOmega\,$ and $\,f\,$ depending on $\,t,x\,$ as described 
here, we define $\,\xq\nh^t(x)=\xq(x,t)\,$ to be 
$\,\Delta\nh f\nh-\varOmega(f)$. Explicitly,
\begin{equation}\label{ltx}
\begin{array}{l}
\xq\nh^t(x)\,=\,\Delta\nh f\hs-\,a\nh f\hs+\,cf\hh^{1+4/\p}\hs
-\,r\nh f\hh^{-\nh1+2/\p}\nh,\hskip9pt\mathrm{where}\\
a=\p(\p\hh-\nh2)\gz/[4(\p\hh-\nh1)]\hskip6pt\mathrm{for}\hskip6pt
\gz=2(\p-\hn1/\p)\hs r\hh(\tz+t)-(\p\hh-\nh1)\hat K\nh,\\
c\,=\,\p\hh[2(\p\hh-\nh1)\hs r\hh(\tz+t)-(\p\hh-\nh2)\hat K]\hs
(\tz+t)^{2/(\p\hh-\nh1)}\nnh/4\hh,\\
f\,=\,\,[2r(1+1/\p)]^{-\p/(2-2\p)}[K\hh+\,\gz/(\p\hh-\nh1)]^{\p/(2-2\p)}\nh,
\hskip9pt\mathrm{and}\\
g\,=\,e^{2x}\hg\hs,\hskip12pt\Delta\,
=\,e^{-\nh2x}\nh\hat\Delta\hs,\hskip12ptK\hs
=\,e^{-\nh2x}(\hat K-\hat\Delta x)\hh,\hskip9pt\mathrm{cf.\ 
Remark~\ref{sccrv}.}
\end{array}
\end{equation}
Since (\ref{ltx}) easily shows that, whenever $\,t\in I\nh$,
\begin{equation}\label{kaf}
K\hs\mathrm{\ and\ }\,f\,\mathrm{\ have,\ at\ }\,x=0\,\mathrm{\ and\ 
}\,t,\mathrm{\ the\ values\ }\,\hat K\hs\mathrm{\ and\ 
}\,(\tz+t)^{\p/(2-2\p)}\nh,
\end{equation}
relations (\ref{ltx}) easily yield
\begin{equation}\label{ltz}
\xq\nh^t(0)\,=\,0\,\,\mathrm{\ for\ all\ }\,\,t\in I\nh.
\end{equation}
As for $\,x$, we require that it be close to $\,0\,$ in a sub\-space 
$\,\mathcal{V}'$ -- described in the lines preceding (\ref{dko}) -- of the 
So\-bo\-lev space $\,L_{\hn\q}^{\nh2}(M\nh,\bbR)$, with $\,\q\ge6\,$
derivatives in $\,L\nh^2\nh$. The So\-bo\-lev embedding theorem then
guarantees $\,C\hh^{\q-2}$ differentiability of $\,x$, while its closeness to
$\,0\,$ is meant to ensure positivity of $\,f\hs$ via that of
$\,\tz+t\in I\nnh_r\w$ in (\ref{kaf}), the latter due to the definition of
$\,I\nh$, cf.\ (\ref{dep}), and the inclusion
$\,I\nnh_r\w\subseteq(0,\infty)$.

Our data $\,\hat K\nh,\p,r,\tz\,$ are constants, while $\,\gz\,$ and $\,c\,$
depend only on $\,t\,$ (not on $\,x$), $\,K\hs$ only on $\,x$, and $\,f\,$ on
both $\,x,t$. Therefore, by (\ref{ltx}) and (\ref{kaf}), for the differentials
of $\,\hat K\nh,\p,r,\tz,\gz,c,K\hs$ and $\,f\,$ with respect to the variable 
$\,x\in\mathcal{V}'\nh$, at $\,x=0\,$ and any $\,t\in I\nh$, one has
\begin{equation}\label{dif}
\begin{array}{l}
d\p_0\w\,=\hs\,d\hs r\nh_0\w\,=\hs\,d\hat K\nnh_0\w\,=\hs\,d\tz\hn_0\w\,
=\hs\,d\gz\hn_0\w\,=\hs\,d\hh c_0\w\,=\hs\,0\hh,\\
dK\nnh_0\w=-4\hs r(1-1/\p^2)(\tz+t)^{(2-3\p)/(2-2\p)}\df\nnh_0\w
=-\hn(\hat\Delta+\nh2\hat K)\hh,
\end{array}
\end{equation}
$2\hat K\,$ denoting here $\,2\hat K\,$ times the identity. From (\ref{dep}),
\begin{equation}\label{bda}
\lambda\,=\,2(\p\hh-\nh1/\p)\hs r\hh\tz\,-\,(\p\hh-\nh2)\hat K\,
\in\,(0,\infty)\hh,
\end{equation}
which is also the value of $\,\lambda\,$ in Lemma~\ref{simul} for 
$\,\theta=\tz$.
\begin{lemma}\label{dlddl}
With notations of Section\/~{\rm\ref{ov}}, 
$\,4\hs r(1-1/\p^2)(\tz+t)^{1\nh-\p/(2-2\p)}\hs d\xq\nh^t_0
=\hbox{$[\hat\Delta+\lambda+2(\p\hh-\nh1/\p)\hs rt]
(\hat\Delta+\nh2\hat K)$}$, as well as\/ 
$\,8\hs r(\p\hh-\nh1)(1-1/\p^2)\tz^{2-\p/(2-2\p)}\hs d\nh\dot\xq^0_0
=\hbox{$[(2-3\p)\hat\Delta-\p\lambda+2(\p\hh-\nh1)(\p\hh-\nh2)\hat K]
(\hat\Delta+\nh2\hat K)$}$, at any\/ $\,t\in I\nh$, or\/ $\,t=0$, 
and\/ $\,x=0$. 
\end{lemma}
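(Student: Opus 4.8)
The plan is to differentiate the defining expression $\,\xq\nh^t(x)=\Delta\nh f-\varOmega(f)\,$ of (\ref{ltx}), with $\,\varOmega\,$ as in (\ref{omf}), first in $\,x\in\mathcal{V}'$ at $\,x=0$, and then, for the second identity, once more in $\,t$. At $\,x=0\,$ the function $\,f\,$ is the spatial constant $\,(\tz+t)^{\p/(2-2\p)}$ by (\ref{kaf}), so $\,\hat\Delta f=0\,$ there. Since $\,\Delta=e^{-\nh2x}\hat\Delta\,$ in (\ref{ltx}), the variation of the conformal factor drops out and $\,d(\Delta\nh f)_0=\hat\Delta\circ\df_0$. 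As the coefficients $\,a,c,r\,$ have vanishing $\,x$-differential by (\ref{dif}), one gets $\,d(\varOmega(f))_0=\varOmega'(f)\,\df_0$, with $\,\varOmega'$ the derivative of $\,\varOmega\,$ in its argument, taken at the above constant value of $\,f$; hence $\,d\xq\nh^t_0=(\hat\Delta-\varOmega'(f))\circ\df_0$. Substituting $\,\df_0=[4r(1-1/\p^2)]^{-1}(\tz+t)^{(3\p-2)/(2-2\p)}(\hat\Delta+\nh2\hat K)$, read off from (\ref{dif}), factors $\,d\xq\nh^t_0\,$ through $\,(\hat\Delta+\nh2\hat K)\,$ on the right.

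The crux is to evaluate the scalar $\,-\varOmega'(f)\,$ at $\,f=(\tz+t)^{\p/(2-2\p)}$. Here the two powers simplify, $\,f^{4/\p}=(\tz+t)^{2/(1-\p)}\,$ and $\,f^{-2+2/\p}=\tz+t$, so the factor $\,(\tz+t)^{2/(\p-1)}\,$ carried by $\,c\,$ (see (\ref{pos})) cancels against $\,f^{4/\p}$ and the whole quantity becomes affine in $\,(\tz+t)$. Expanding $\,a,c,\gz\,$ by their definitions and collecting terms, the coefficient of $\,r(\tz+t)\,$ equals $\,2(\p-1/\p)\,$ and that of $\,\hat K\,$ equals $\,-(\p-2)$, giving $\,-\varOmega'(f)=2(\p-1/\p)r(\tz+t)-(\p-2)\hat K$; by (\ref{bda}) this equals $\,\lambda+2(\p-1/\p)rt$. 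Multiplying $\,d\xq\nh^t_0\,$ by the stated prefactor $\,4r(1-1/\p^2)(\tz+t)^{1-\p/(2-2\p)}\,$ cancels all powers of $\,(\tz+t)$, normalizes the leading coefficient to $\,1$, and yields the first identity. I expect this scalar computation to be the main obstacle, being the only step where the several $\,t$-dependent quantities $\,\gz,a,c\,$ and the fractional powers of $\,(\tz+t)\,$ must be seen to conspire so that all non-integer powers disappear and the outcome collapses to an affine function of $\,t$.

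For the second identity I would use equality of mixed partials -- justified by the $\,C^\infty$ dependence in Remark~\ref{sblev} -- to write $\,d\dot\xq^0_0=\frac{d}{dt}\big|_{t=0}d\xq\nh^t_0\,$ and differentiate the closed form just obtained, namely $\,d\xq\nh^t_0=[4r(1-1/\p^2)]^{-1}(\tz+t)^{m}[\hat\Delta+\lambda+2(\p-1/\p)rt](\hat\Delta+\nh2\hat K)\,$ with $\,m=\p/(2-2\p)-1$. The $\,t$-derivative contributes one term from the power $\,(\tz+t)^{m}\,$ and one from $\,\frac{d}{dt}[\lambda+2(\p-1/\p)rt]=2(\p-1/\p)r$; at $\,t=0\,$ I would again invoke (\ref{bda}) in the form $\,2(\p-1/\p)r\tz=\lambda+(\p-2)\hat K\,$ to combine these into $\,[m\hat\Delta+(m+1)\lambda+(\p-2)\hat K]\,$ times $\,(\hat\Delta+\nh2\hat K)$, where $\,m+1=\p/(2-2\p)$. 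Multiplying by $\,8r(\p-1)(1-1/\p^2)\tz^{2-\p/(2-2\p)}\,$ collapses the $\,\tz$-powers to $\,\tz^0\,$ and the numerical factor to $\,2(\p-1)$, turning the bracket into $\,(2-3\p)\hat\Delta-\p\lambda+2(\p-1)(\p-2)\hat K$, which is the asserted expression.
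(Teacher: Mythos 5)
Your proposal is correct and follows essentially the same route as the paper's proof: differentiate $\,\Delta f-\varOmega(f)\,$ in $\,x\,$ at $\,x=0\,$ (where $\,f\,$ is spatially constant, so only $\,\hat\Delta\,\df\nnh_0\w\,$ and $\,\varOmega'(f)\,\df\nnh_0\w\,$ survive), evaluate $\,\varOmega'(f)=-[2(\p-1/\p)r(\tz+t)-(\p-2)\hat K]=-\lambda-2(\p-1/\p)rt\,$ via (\ref{bda}), substitute $\,\df\nnh_0\w\,$ from (\ref{dif}), and obtain the second identity by applying $\,d/dt\,$ and using (\ref{bda}) again. Your verification that the fractional powers of $\,\tz+t\,$ cancel and the bracket collapses to an affine function of $\,t\,$ is exactly the computation the paper carries out, only spelled out in more detail.
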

\begin{proof}By (\ref{ltx}), 
$\,d\hs[\Delta\nh f]_0\w=\hat\Delta\hs\df\nnh_0\w$ and 
$\,d\hs[\varOmega(f)]_0\w=\varOmega'\nnh(f)\,\df\nnh_0\w$, with 
$\,\varOmega'\nnh(f)$ denoting the derivative $\,d\varOmega\nh/\hn df\,$ at 
$\,f\nh=(\tz+t)^{\p/(2-2\p)}$ (the value of $\,f\,$ for 
$\,x=0$, which is a constant function on $\,M\nh$, depending on $\,t$). From 
(\ref{omf}) and (\ref{ltx}), 
$\,\varOmega'\nnh(f)=a-(1+4/\p)\hh cf\hh^{4/\p}\nh
-(1-2/\p)\hs r\nh f\hh^{-\nh2+2/\p}\nh
=-\hh[2(\p\hh-\nh1/\p)\hs r\hh(\tz+t)-(\p\hh-\nh2)\hat K]$ which, by 
(\ref{bda}), equals $\,-\nh2(\p\hh-\nh1/\p)\hs rt-\lambda$. This yields 
$\,d\xq\nh^t_0=d\hs[\Delta\nh f]_0\w-d\hs[\varOmega(f)]_0\w
=[\hat\Delta+\lambda+2(\p\hh-\nh1/\p)\hs rt]\,\df\nnh_0\w$, cf.\ (\ref{ltx}),
and the last line of (\ref{dif}) implies the first equality; applying
$\,d/dt\,$ to it and using (\ref{bda}), we obtain the second one.
\end{proof}
To use Lemma~\ref{vspec}, we fix 
$\,M\nh,\hg,\hat K\nh,\p,\q,r,\lambda,\tz\,$ as in (i) -- (iii) and
(\ref{dep}), along with specific vector sub\-spaces $\,\mathcal{V}'$ of 
$\,L_{\hn\q}^{\nh2}(M\nh,\bbR)\,$ and $\,\hat{\mathcal{V}}$ of
$\,L_{\hn\q-4}^{\nh2}(M\nh,\bbR)\,$ such that
\begin{enumerate}
  \def\theenumi{{\rm\roman{enumi}}}
\item[(iv)] $\hat{\mathcal{V}}$ contains
$\hs\hat{\mathcal{V}}'\nh=(\hat\Delta+\lambda)(\mathcal{V}')\,$ and all 
$\,\xq\nh^t(\mathcal{V}')$, $\,t\in I\nh$, while
$\,\dim\,[\hh\hat{\mathcal{V}}\nh/\hh\hat{\mathcal{V}}']=\hs1$.
\end{enumerate}
Here is how we select $\,\mathcal{V}'$ and $\,\hat{\mathcal{V}}\nh$. For
$\,\hat K\nnh<0$, we set $\,\mathcal{V}'\nh=L_{\hn\q}^{\nh2}(M\nh,\bbR)\,$ and 
$\,\hat{\mathcal{V}}\nh=L_{\hn\q-4}^{\nh2}(M\nh,\bbR)$. If $\,\hat K\nh>0$, we
fix a nontrivial $\,\hg\hn$-iso\-met\-ric action of the circle group $\,S^1$
on $\,M=\bbR\mathrm{P}^2$ (or, $\,M=S^2$) and let the sub\-spaces \
$\,\mathcal{V}'\nh,\hat{\mathcal{V}}$ of $\,L_{\hn\q}^{\nh2}(M\nh,\bbR)\,$ and
$\,L_{\hn\q-4}^{\nh2}(M\nh,\bbR)\,$ consist of all $\,S^1\nh$-in\-var\-i\-ant
functions required, in the case of $\,M=S^2\nh$, to be also invariant under 
the antipodal isometry. In both cases one has (iv), since
\begin{equation}\label{dko}
\dim\hskip1.2pt[\hh\mathcal{V}'\nh\cap\hs\mathrm{Ker}\hskip1.7pt(\hat\Delta
+\lambda)]\,=\,1\hh,
\end{equation}
due to (iii.b) or, respectively, (iii.a) combined with Remark~\ref{eignv}.
\begin{lemma}\label{cndae}
Conditions\/ {\rm(a)} -- {\rm(e)} of Section\/~{\rm\ref{ov}} are all satisfied 
by\/ $\,\mathcal{V}'\nnh,\hat{\mathcal{V}}\hs$ and\/ $\,\xq\,$ chosen as
above, with\/ $\,\mathcal{V}\nh=\mathcal{V}'\nnh\times\bbR\,$ and\/
$\,k=\infty$, while\/
$\,\mathrm{Ker}\,d\xq^0_0
\subseteq\hs\mathrm{Ker}\hskip1.7pt(\hat\Delta+\lambda)$. 
\end{lemma}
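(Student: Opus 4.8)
The plan is to check the five conditions (a)--(e) of Section~\ref{ov} in turn, basing everything on the factorization of the two differentials provided by Lemma~\ref{dlddl} and on the spectral decomposition of $\hat\Delta$. Condition (a) and the splitting $\mathcal{V}=\mathcal{V}'\times\bbR$ hold by the choice of $\mathcal{V}'$, and (b) is exactly (\ref{ltz}). For the smoothness claim $k=\infty$ I would invoke Remark~\ref{sblev}: in (\ref{ltx}) the curvature $K=e^{-2x}(\hat K-\hat\Delta x)$ is an affine expression in $x$ composed with the real-analytic function $e^{-2x}$, the function $f$ is real-analytic in $K$, and $\Delta\nh f=e^{-2x}\hat\Delta\nh f$ together with $\Omega(f)$ arise by the same operations, so each is a $C^\infty$ map on the Banach algebra $L^2_\q(M,\bbR)$.

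The crux is that, by Lemma~\ref{dlddl} at $t=0$, the operator $d\xq^0_0$ is a nonzero multiple of $(\hat\Delta+\lambda)(\hat\Delta+\nh2\hat K)$. Each factor is self-adjoint and elliptic on the compact surface $(M\nh,\hg)$, hence Fredholm, and so is the composition, giving (c). To read off kernel and cokernel I would decompose $\mathcal{V}'$ into the eigenspaces $E_\mu=\mathrm{Ker}(\hat\Delta+\mu)$ of $-\nh\hat\Delta$, on each of which $d\xq^0_0$ acts by the scalar $(\lambda-\mu)(2\hat K-\mu)$. This scalar vanishes only for $\mu=\lambda$ or $\mu=2\hat K$; but $E_{2\hat K}\cap\mathcal{V}'=\{0\}$, since $\lambda\ne2\hat K$ in every case and $2\hat K$ is not an eigenvalue on $\mathcal{V}'$ -- trivially when $\hat K<0$, and by Remark~\ref{eignv} when $\hat K>0$, as $2\hat K$ is none of the values $2j(2j+1)\hat K$ surviving after passing to antipodally (resp.\ $S^1$-) invariant functions. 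Hence $\mathrm{Ker}\,d\xq^0_0=E_\lambda\cap\mathcal{V}'=\mathrm{Ker}(\hat\Delta+\lambda)\cap\mathcal{V}'$, which is one-dimensional by (\ref{dko}) and is contained in $\mathrm{Ker}(\hat\Delta+\lambda)$, the final assertion. As $\hat\Delta+\nh2\hat K$ is then bijective on $\mathcal{V}'$, the image $d\xq^0_0(\mathcal{V}')$ coincides with $(\hat\Delta+\lambda)(\mathcal{V}')=\hat{\mathcal{V}}'$, of codimension $1$ in $\hat{\mathcal{V}}$ by condition (iv); this settles (d).

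The hard part will be the transversality condition (e), which is where hypotheses (iii) are genuinely consumed. Let $u$ span $\mathrm{Ker}\,d\xq^0_0\subseteq E_\lambda$, so $\hat\Delta u=-\lambda u$. Using the second identity of Lemma~\ref{dlddl} one has $(\hat\Delta+\nh2\hat K)u=(2\hat K-\lambda)u$ and then, after substituting $\hat\Delta u=-\lambda u$ and using (\ref{bda}), finds that $d\nh\dot\xq^0_0u$ equals $u$ times a nonzero factor multiplied by $(2\hat K-\lambda)[\lambda+(\p-\hn2)\hat K]$. Both brackets are nonzero: $2\hat K-\lambda\ne0$ as above, while $\lambda+(\p-\hn2)\hat K\ne0$ by positivity when $\hat K>0$ and by the sign hypothesis $[\lambda+(\p-\hn2)\hat K]\hh r>0$ of (iii.b) when $\hat K<0$. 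Thus $d\nh\dot\xq^0_0(\mathrm{Ker}\,d\xq^0_0)=\bbR u\ne\{0\}$. Finally, $\hat{\mathcal{V}}'=d\xq^0_0(\mathcal{V}')$ is the image of the self-adjoint operator $\hat\Delta+\lambda$, hence the $L^2$-orthogonal complement of $E_\lambda$ inside $\hat{\mathcal{V}}$; as $0\ne u\in E_\lambda$, it follows that $\bbR u\cap d\xq^0_0(\mathcal{V}')=\{0\}$, which completes (e) and, with it, the verification.
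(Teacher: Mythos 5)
Your proposal is correct and follows essentially the same route as the paper's proof: both rest on the factorizations in Lemma~\ref{dlddl}, on injectivity of $\,\hat\Delta+2\hat K\,$ on $\,\mathcal{V}'$ (from Remark~\ref{eignv} when $\,\hat K>0$, trivially when $\,\hat K<0$), on (\ref{dko}) together with condition (iv) for the dimension counts in (d), and on the observation that $\,d\dot\xq^0_0$ acts on $\,\mathrm{Ker}(\hat\Delta+\lambda)\,$ as a nonzero constant times $\,(2\hat K-\lambda)[\lambda+(\p-2)\hat K]\,$ times the identity, which is nonzero by condition (iii) in Section~\ref{cb}. Your spectral decomposition of $\,\mathcal{V}'$ into eigenspaces, the remark that Fredholmness of $\,d\xq^0_0$ comes from ellipticity of the fourth-order factorized operator, and the closing $\,L^2$-orthogonality argument giving $\,\bbR u\cap d\xq^0_0(\mathcal{V}')=\{0\}\,$ merely make explicit steps that the paper leaves implicit.
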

\begin{proof}First, $\,C^\infty\nnh$-dif\-fer\-en\-ti\-a\-bil\-i\-ty of
$\,\xq\,$ and (a) -- (b) are obvious from Remark~\ref{sblev} and,
respectively, (\ref{ltz}). (The former also holds for a more general reason:
one can derive it from Ne\-myts\-ky's theorem 
\cite[Section 10.3.4]{renardy-rogers}, without invoking 
real\hh-an\-a\-lyt\-ic\-i\-ty.) Next, we have (c) -- (d). Namely, due to
(iii-a) and Remark~\ref{eignv}, $\,\hat\Delta+\nh2\hat K\hs$ is injective on
$\,\mathcal{V}'\nh$. Thus, 
Lemma~\ref{dlddl} for $\,t=0\,$ gives $\,\mathrm{Ker}\,d\xq^0_0
=\mathcal{V}'\nh\cap\mathrm{Ker}\hskip1.7pt(\hat\Delta+\lambda)\,$ and 
$\,d\xq^0_0(\mathcal{V}')=(\hat\Delta+\lambda)(\mathcal{V}')$, while
(\ref{dko}) and (iv) imply one\hs-di\-men\-sion\-al\-i\-ty of both spaces in
(d). Finally, (e) follows since the restriction of the factor
$\,(2-3\p)\hat\Delta-\p\lambda+2(\p\hh-\nh1)(\p\hh-\nh2)\hat K\hs$ in the last
equality of Lemma~\ref{dlddl} to
$\,\mathrm{Ker}\hskip1.7pt(\hat\Delta+\lambda)\,$ equals the identity times 
$\,2(\p\hh-\nh1)[\lambda+(\p\hh-\nh2)\hat K]$, which is nonzero as a
consequence of (iii).
\end{proof}
Lemma~\ref{cndae} allows us to apply Lemma~\ref{vspec} to our 
$\,\mathcal{V}'\nnh,\hat{\mathcal{V}}\nh,\xq\,$ and 
$\,\mathcal{V}\nh=\mathcal{V}'\nnh\times\bbR$, arising from a fixed septuple 
$\,M\nh,\hg,\hat K\nh,\p,\q,r,\lambda$, which yields
(i) -- (v) of Section~\ref{ov}, along with (\ref{int}). We define the
$\,\lambda${\it-branch\/} corresponding to these data to be the set of all
$\,g=e^{2x}\hg$, where $\,(x,t)\in\mathcal{V}'\nh\times\bbR\,$ ranges over the
bifurcating branch of solutions introduced in (\ref{rfr}). The
$\,\lambda$-branches, associated with all positive eigen\-val\-ues
$\,\lambda$ of $\,-\nh\hat\Delta\,$ satisfying condition (iii), are curves
of metrics on our closed surface $\,M\nh$, emanating from the fixed metric 
$\,\hg\,$ of nonzero constant Gauss\-i\-an curvature $\,\hat K\nh$.
\begin{lemma}\label{rlizs}Every metric\/ $\,g\ne\hg\,$ in any\/
$\,\lambda$-branch, close to\/ $\,\hg\nh$, realizes case\/ {\rm(b)} of 
Theorem\/~{\rm\ref{srfbs}} and, in particular, has nonconstant Gauss\-i\-an
curvature\/ $\,K\nh$.
\end{lemma}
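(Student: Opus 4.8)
The plan is to read the system $\,(\ref{ktr})\,$ of Theorem~\ref{rephr} straight off the bifurcation construction, and then to prove separately that, off the trivial solutions, the curvature $\,K\,$ cannot remain constant. First I would unwind the definitions: a metric $\,g=e^{2x}\hg\,$ lying in a $\,\lambda$-branch comes from a point $\,(x,t)\,$ of the bifurcating branch of (\ref{rfr}), which by (\ref{int}) satisfies $\,\xq\nh^t(x)=0$. By the formulas collected in (\ref{ltx}) this says exactly $\,\Delta\nh f=\varOmega(f)$, i.e.\ (\ref{dlk}.a), which together with (\ref{omf}) is (\ref{ktr}.iii); at the same time (\ref{dlk}.b)\,$=$\,(\ref{ktr}.i) is the very relation \emph{defining} $\,f\,$ in (\ref{ltx}), and (\ref{ktr}.ii) is merely the definition of $\,a$. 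Thus (\ref{ktr}) holds for the fixed parameter $\,r\ne0$, so that -- once $\,f\,$ is known to be nonconstant -- Theorem~\ref{rephr} will give harmonic curvature of $\,f\hh^{4/\p}[\hs g+\h\hs]$, and (\ref{css}) will place it in case (b).

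The heart of the matter is therefore the nonconstancy of $\,K\,$ (equivalently of $\,f$, through (\ref{ktr}.i) with $\,r\ne0$) whenever $\,g\ne\hg$, and for this I would exploit the first-order geometry of the branch. By Lemma~\ref{vspec} the trivial curve $\,\mathcal{C}\,$ lies in $\,\{0\}\times\bbR$, with tangent $\,v=(0,1)$, while $\,T\hskip-2.7pt_{(0,0)}\w\hs\mathcal{N}=\mathrm{Ker}\,d\xq^0_0\times\bbR$; by Lemma~\ref{cndae} and (\ref{dko}), $\,\mathrm{Ker}\,d\xq^0_0=\bbR u\,$ for a single eigenfunction $\,u\,$ with $\,\hat\Delta u=-\lambda u$, necessarily nonconstant since $\,\lambda>0$. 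The bifurcating branch has, by (\ref{int}), tangent $\,\bbR w\,$ with $\,w\in\mathrm{Ker}\,\varPhi\smallsetminus\bbR v$, forcing $\,w=(\alpha u,\beta)\,$ with $\,\alpha\ne0$. Writing the branch as a $\,C^1$ curve $\,s\mapsto(x(s),t(s))\,$ with $\,x(0)=0\,$ and $\,\dot x(0)=\alpha u$, and differentiating $\,K=e^{-\nh2x}(\hat K-\hat\Delta x)\,$ from (\ref{ltx}) (the map $\,x\mapsto K\,$ being smooth by Remark~\ref{sblev}), I would obtain
\[
\frac{dK}{ds}\Big|_{s=0}=-2\alpha\hat K\hs u-\hat\Delta(\alpha u)=\alpha(\lambda-2\hat K)\hs u\hh.
\]
Since $\,\lambda\ne2\hat K$ -- for $\,\hat K<0\,$ because $\,\lambda>0>2\hat K$, and for $\,\hat K>0\,$ because (iii.a) and Remark~\ref{eignv} give $\,\lambda=2l(2l+1)\hat K\,$ with $\,l\ge1$, whence $\,\lambda-2\hat K=2[\hs l(2l+1)-1]\hat K>0$ -- this derivative is a nonzero multiple of the nonconstant $\,u$.

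To upgrade this to nonconstancy at \emph{every} nearby $\,g\ne\hg$, I would track the $\,\hg$-mean-zero part $\,\Phi(s)\,$ of $\,K(s)$: it is a $\,C^1$ curve into the complement of the constants with $\,\Phi(0)=0$, and $\,\Phi'(0)\,$ is the mean-zero part of $\,\alpha(\lambda-2\hat K)\hs u$, which is nonzero because $\,u\,$ is nonconstant; hence $\,\|\Phi(s)\|\ge\frac12\|\Phi'(0)\|\,|s|>0\,$ for all small $\,s\ne0$, so $\,K(s)\,$ is nonconstant precisely when $\,g\ne\hg$. Combined with the reduction above, nonconstant $\,K\,$ makes $\,f\,$ nonconstant, Theorem~\ref{rephr} delivers harmonic curvature, and $\,r\ne0\,$ with (\ref{css}) identifies case (b). The step I expect to be the genuine obstacle is exactly this nonconstancy of $\,K$: the soft observation that $\,x\ne0\,$ off $\,\mathcal{C}\,$ does \emph{not} suffice, since a nonzero \emph{constant} $\,x\,$ would keep $\,K\,$ constant; one must use that the branch actually moves in the nonconstant eigendirection $\,u$, and that $\,\lambda\ne2\hat K$, through the explicit first derivative above.
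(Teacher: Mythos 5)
Your proposal is correct and takes essentially the same route as the paper's own proof: the same reduction of the branch equation $\,\xq\nh^t(x)=0\,$ to (\ref{ktr}) via (\ref{ltx}) (with Theorem~\ref{rephr} and (\ref{css}) then identifying case (b)), and the same key step -- computing that the derivative of $\,K\,$ along the bifurcating branch at $\,(0,0)\,$ equals $\,(\lambda-2\hat K)\,$ times a nonconstant eigenfunction of $\,-\hat\Delta$, nonzero because $\,\lambda\ne2\hat K\,$ by condition (iii). Your explicit check that $\,\lambda\ne2\hat K\,$ and the mean-zero-projection argument upgrading first-order nonconstancy to nonconstancy at all nearby branch points merely spell out details the paper leaves implicit.
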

\begin{proof}By (\ref{int}) -- (\ref{rfr}) the bifurcating branch is a subset
of $\,\xq\nnh^{-\nh1}\nh(0)$, so that (\ref{ltx}) gives (\ref{ktr}) whenever 
$\,g=e^{2x}\hg\,$ for any $\,(x,t)\,$ from the bifurcating branch, with 
$\,a,\gz,c,f\,$ as in (\ref{ltx}). Theorem~\ref{rephr}, the lines preceding 
it, and (\ref{css}) will now yield case (b) of Theorem~\ref{srfbs}, once 
$\,K\hs$ (or, equivalently, $\,f$) is shown to be nonconstant, which we do in
the next paragraph; we cannot simply invoke (\ref{css}), with our fixed
$\,r\ne0$, since Theorem~\ref{rephr} {\it assumes\/} nonconstancy of $\,f\nh$.

The curve of trivial solutions -- see (\ref{rfr}) --- is
contained in $\,\{0\}\times\bbR$, due to the final clause of 
Lemma~\ref{vspec}. It intersects the bifurcating branch transversally, at 
$\,(0,0)\in\mathcal{V}\nh=\mathcal{V}'\nnh\times\bbR$, while both curves lie
on the surface $\,\mathcal{N}\nh\subseteq\mathcal{V}\nh$. (Cf.\ (ii), (a) in
Section~\ref{ov} and (\ref{int}).) A nonzero vector 
$\,(x,t)\,$ tangent to the bifurcating branch at $\,(0,0)\,$ thus has
$\,x\ne0\,$ (or else it would be also tangent to the
triv\-i\-al-so\-lu\-tions curve), and the image of $\,(x,t)\,$ under the
differential $\,d\hh\varPsi\nnh_{(0,0)}\w$, at $\,(0,0)$, of the mapping 
$\,\varPsi\,$ sending $\,(x,t)\in\mathcal{N}\hs$ to the Gauss\-i\-an curvature 
$\,K\,$ of the metric $\,g=e^{2x}\hg\,$ is, from the last line of (\ref{dif}), 
equal to $\,-\hn(\hat\Delta+\nh2\hat K)x$. in view of Lemmas~\ref{vspec}
and~\ref{cndae}, $\,(x,t)\in T\hskip-2.7pt_{(0,0)}\w\hs\mathcal{N}\nh
=\hs\mathrm{Ker}\,d\xq^0_0\times\bbR\,$ and 
$\,\mathrm{Ker}\,d\xq^0_0
\subseteq\mathrm{Ker}\hskip1.7pt(\hat\Delta+\lambda)$, so that  
$\,d\hh\varPsi\nh_{(0,0)}\w(x,t)=-\hn(\hat\Delta+\nh2\hat K)x
=(\lambda-\nh2\hat K)x\,$ is nonzero as 
$\,\lambda\ne\nh2\hat K\hs$ by (iii), and hence also nonconstant, being an 
eigen\-func\-tion of $\,-\nh\hat\Delta\,$ for the positive eigen\-val\-ue 
$\,\lambda$. This, combined with constancy of $\,\hat K\nnh=\varPsi(0,0)$,
implies nonconstancy of $\,\varPsi(x,t)\,$ for all $\,(x,t)\ne(0,0)\,$ in the
bifurcating branch, sufficiently close to $\,(0,0)$.
\end{proof}
The har\-mon\-ic-cur\-va\-ture property of the metric
$\,f\hh^{4/p}[\hs g+\h\hs]\,$ in Theorem~\ref{rephr} is obviously 
unaffected when one multiplies $\,g\,$ and $\,\h\,$ by the same positive 
constant, or separately rescales $\,f\nh$. Our approach removes this freedom, 
by insisting that $\,\gz$ and $\,c\,$ be defined as in (\ref{ltx}): the
metric $\,g=e^{2x}\hg$, for any $\,(x,t)\in\xq\nnh^{-\nh1}\nh(0)$ near
$\,(0,0)$, either equals $\,\hg$, or has nonconstant Gauss\-i\-an curvature,
depending on whether $\,(x,t)\,$ lies in the triv\-i\-al-so\-lu\-tions curve,
or in the bifurcating branch with $\,(0,0)$ removed. Thus, such metrics
include no nontrivial constant multiples of $\,\hg$.

\section{Nonconstant Gauss\-i\-an curvature: conclusion}\label{cn}
\setcounter{equation}{0}
Lemma~\ref{rlizs} implies the second case of (\ref{dch}), that is, (b) in
Theorem~\ref{srfbs}, for $\,M\,$ dif\-feo\-mor\-phic to
$\,S^2\nh,\,\bbR\mathrm{P}^2$ or a closed orientable surface of any genus
$\,\mathbf{g}>1$, and metrics on $\,M\,$ forming nontrivial curves of 
homothety types which, in the case $\,\mathbf{g}>1$, also represent a 
Teich\-m\"ul\-ler-open nonempty set of con\-for\-mal structures.

{\it These metrics give rise to nontrivial compact warped products with
harmonic curvature, having fibres of all dimensions\/} $\,\p\ge2$, {\it and 
any such\/} $\,M\,$ {\it as the base}.

Recall that the $\,\lambda$-branches appearing in Lemma~\ref{rlizs}, for 
eigen\-val\-ues $\,\lambda>0\,$ of $\,-\nh\hat\Delta\,$ satisfying (iii) in
Section~\ref{cb}, constitute curves of metrics on the closed surface 
$\,M\nh$, emanating from the metric $\,\hg\,$ of constant Gauss\-i\-an 
curvature $\,\hat K\nnh\ne0$, and every metric $\,g\,$ near $\,\hg\,$ in the
$\,\lambda$-branch, except $\,\hg\nh$, realizes case (b) of
Theorem~\ref{srfbs}. The metrics in any given $\,\lambda$-branch
\begin{enumerate}
  \def\theenumi{{\rm\alph{enumi}}}
\item represent uncountably many distinct homothety types, and
\item when sufficiently close to $\,\hg\nh$, they cannot be homo\-thet\-ic to
any metric from a $\,\lambda\nh'\nh$-branch, close to $\,\hg\nh$, provided
that $\,\lambda\nh'\hn\ne\lambda$.
\end{enumerate}
First, (a) follows since the homothety invariant
$\,[K\nnh_{\mathrm{max}}\w\nh:\hs K\nnh_{\mathrm{min}}\w]\,$ of
Remark~\ref{hminv}, restricted to any neighborhood of $\,\hg\,$ in the
$\,\lambda$-branch, is nonconstant (and, obviously, continuous): its constancy
would make it equal to $\,[1\hskip-1.9pt:\hskip-2pt1]\,$ (the value of the
invariant for $\,\hg$), and the Gauss\-i\-an curvatures of all the metrics
near $\,\hg\,$ in the $\,\lambda$-branch would thus be constant, contrary to
the final clause of Lemma~\ref{rlizs}.

On the other hand, when a metric $\,g\ne\hg\,$ in the $\,\lambda$-branch
approaches $\,\hg\nh$, the area $\,\mathrm{A}\,$ of $\,(M\nh,g)\,$ tends to
the area $\,\hat{\mathrm{A}}\,$ of $\,(M\nh,\hg)\,$ (clearly equal to 
$\,2\pi/\hat K\hs$ times the Euler characteristic $\,\chi(M)$) and,
consequently, for the homothety invariant $\,\gz\mathrm{A}$ mentioned in the
lines following Theorem~\ref{uniqu}, (\ref{lbd}.ii) implies that
\begin{equation}\label{lim}
\gz\mathrm{A}\,\to\,2\pi\hh(-\nh1+\lambda/\hat K)\hs\chi(M)\hskip7pt\mathrm{as}
\hskip5ptg\to\hg\hskip6pt\mathrm{in\ the\ }\,\lambda\hyp\mathrm{branch.}
\end{equation}
The limits (\ref{lim}) are obviously different for different $\,\lambda$,
which proves (b).

According to Theorem~\ref{srfbs}, for every positive eigen\-val\-ue
$\,\lambda\,$ of $\,-\nh\hat\Delta\,$ having the property (iii-a) or (iii-b) 
of Section~\ref{cb}, the metrics $\,g\ne\hg\,$ in the $\,\lambda$-branch give 
rise to nontrivial compact warped products with harmonic curvature.

In case (iii.a), $\,\lambda\,$ is a positive eigen\-val\-ue of 
$\,-\nh\hat\Delta$, which may be completely arbitrary (if
$\,M=\bbR\mathrm{P}^2$), or of the form $\,\lambda_j\w$ with $\,j\,$ even and
positive (if $\,M=S^2$ and (\ref{eig}) represents the spectrum of
$\,-\nh\hat\Delta$). See Remark~\ref{eignv}.

Condition (iii.b) amounts to requiring that $\,\lambda\,$ be simple and
either greater than $\,(\p\hh-\nh2)\hh|\hat K\hn|\,$ (when $\,r>0$) or
less than $\,(\p\hh-\nh2)\hh|\hat K\hn|\,$ (for $\,r<0$), while
$\,\hat K\nnh<0$.

If $\,r<0\,$ (so that (ii) in Section~\ref{cb} gives $\,\p>2$), or $\,r>0\,$
and $\,\p=2$, the existence of such eigen\-val\-ues $\,\lambda\,$ is
immediate from the result of Schoen, Wolpert and Yau \cite{schoen-wolpert-yau} 
mentioned in Remark~\ref{eighp}.

Finally, when $\,r>0\,$ and $\,\p>2$, we can only provide some anecdotal
evidence for an analogous existence assertion: on the Bolza surface, with the
convention (\ref{eig}), $\,\lambda_{24}\w$ is greater than
$\,23\hh|\hat K\hn|\,$ and simple \cite{strohmaier-uski}; therefore, (iii.b)
holds in this case for all $\,\p\in\{3,4,\dots,21\}$.

To simplify the phrasing of the last two paragraphs, let us unify the two
cases of condition (iii.b) by ignoring the sign of $\,r$. Then, (iii.b) states 
that, on a closed orientable surface of genus $\,\mathbf{g}>1$, with a metric
of constant Gauss\-i\-an curvature $\,\hat K\nnh<0$, the eigen\-val\-ue
$\,\lambda>0\,$ of $\,-\nh\hat\Delta\,$ is simple and different from
$\,(\p\hh-\nh2)\hh|\hat K\hn|$. The result of \cite{schoen-wolpert-yau}
guarantees that, for every genus $\,\mathbf{g}>1$, metrics admitting such 
eigen\-val\-ues $\,\lambda\,$ realize a nonempty open subset of the
Teich\-m\"ul\-ler space.

\section{Constant Gauss\-i\-an curvature: existence\done}\label{cc}
\setcounter{equation}{0}
We now proceed to verify that the first case of (\ref{dch}) -- or,
equivalently, (a) in Theorem~\ref{srfbs} -- holds for a
Teich\-m\"ul\-ler-open, nonempty set of metrics of constant negative
curvatures, on closed orientable surfaces $\,M\,$ of all genera
$\,\mathbf{g}>1$.

{\it This results in nontrivial compact warped products with harmonic
curvature, having fibres of all relevant dimensions\/} $\,\p\ge4$, {\it and
all such\/} $\,M\,$ {\it as the bases}.

The existence assertion needed here is provided by the following result of
Ya\-ma\-be \cite{yamabe}, cf.\ also \cite[pp.\ 115--119]{aubin},
\cite[Lemma 16.37]{besse}, which remains valid even if $\,\dim M=\m>2$, as
long as $\,(\m-2)\hh q<2\m$. The sign of the $\,g\hh$-\nh La\-plac\-i\-an
$\,\Delta\,$ in \cite{besse} is the opposite of ours.
\begin{lemma}\label{ymabe}Given a compact Riemannian surface\/ 
$\,(M\nh,g)$, real numbers\/ $\,q>2$ and\/ $\,c>0$, and\/ $\,a\in\bbR\,$ 
such that\/ $\,(q-2)\hh a>\lambda_1\w$ for the lowest positive eigenvalue\/ 
$\,\lambda_1\w$ of\/ $\,-\nh\Delta$, the equation
\begin{equation}\label{dfm}
\Delta\nh f\hs-\,a\nh f\hs
=\,-\hn cf\hh^{q-1}
\end{equation}
admits a nonconstant positive\/ $\,C^\infty\nnh$ solution\/ $\,f:M\to\bbR$.
\end{lemma}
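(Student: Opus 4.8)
The plan is to realize $\,f\,$ as a minimizer of the scale-invariant Rayleigh-type quotient
\[
Q(f)\,=\,\frac{\displaystyle\int_M(\hs|\nabla\nh f|^2+a\nh f^2)}{\displaystyle\Bigl(\int_M|f|^q\Bigr)^{2/q}}\hh,
\]
with all integrals taken against the Riemannian measure of $\,g$, over nonzero $\,f\,$ in $\,H^1(M)$. The first observation is that $\,q>2\,$ together with $\,(q-2)\hh a>\lambda_1\w>0\,$ forces $\,a>0$; hence the numerator of $\,Q\,$ is positive for every $\,f\ne0$, so $\,Q>0\,$ and the infimum $\,\mu=\inf Q\,$ is $\,\ge0$. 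This sign of $\,a\,$ is what will eventually make $\,\mu\,$ a usable (positive) Lagrange multiplier.

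To produce a minimizer I would run the direct method. Using scale-invariance, restrict to the constraint $\,\|f\|_{L^q}=1$. Since $\,M\,$ is a compact surface, the embedding $\,H^1(M)\hookrightarrow L^q(M)\,$ is compact for every $\,q<\infty\,$ (Rellich), and $\,\|f\|_{L^2}\le C\,\|f\|_{L^q}$ by H\"older; these bound any minimizing sequence in $\,H^1$. Extracting a subsequence converging weakly in $\,H^1\,$ and strongly in $\,L^2\,$ and $\,L^q$, the constraint passes to the limit and weak lower semicontinuity of $\,\int_M|\nabla f|^2\,$ shows the limit attains $\,\mu$; as it is nonzero, $\,\mu>0$. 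Replacing $\,f\,$ by $\,|f|\,$ (which leaves $\,Q\,$ unchanged, since $\,|\nabla|f||=|\nabla f|\,$ a.e.) we may take $\,f\ge0$, and its Euler--Lagrange equation is $\,-\Delta f+af=\mu\hh f^{q-1}\,$ (testing against $\,f\,$ identifies the multiplier as $\,\mu$). Elliptic bootstrapping, using $\,H^1\subseteq L^p\,$ for all $\,p<\infty\,$ in dimension two, upgrades $\,f\,$ to a $\,C^\infty\,$ function, and the strong maximum principle for $\,-\Delta+a\,$ (with $\,a>0$) yields $\,f>0\,$ everywhere. A final rescaling $\,f\mapsto(\mu/c)^{1/(q-2)}f\,$ converts the multiplier $\,\mu>0\,$ into the prescribed $\,c>0$, giving a positive smooth solution of (\ref{dfm}).

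The only remaining point, and the one I expect to be the crux, is that this minimizer is \emph{nonconstant}; this is the sole place where $\,(q-2)\hh a>\lambda_1\w$ is used. On the constants $\,Q\,$ takes the value $\,aV^{1-2/q}$, where $\,V=\mathrm{area}\hs(M\nh,g)$, so it suffices to show $\,\mu\,$ lies strictly below this number. Let $\,\psi\,$ be a first nonconstant eigenfunction of $\,-\Delta$, normalized by $\,\int_M\psi=0\,$ and $\,\int_M|\nabla\psi|^2=\lambda_1\w\!\int_M\psi^2$, and expand to second order in $\,\ve$; a direct computation gives
\[
Q(1+\ve\psi)\,=\,aV^{1-2/q}+V^{-2/q}\hh[\hh\lambda_1\w-(q-2)\hh a\hs]\hs\ve^2\!\int_M\psi^2\,+\,O(\ve^3)\hh,
\]
whose $\,\ve^2$-coefficient is negative precisely when $\,(q-2)\hh a>\lambda_1\w$. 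Thus $\,\mu<aV^{1-2/q}$, no constant can attain $\,\mu$, and the minimizer is nonconstant, completing the construction. This is exactly the mechanism by which the constant solution $\,f\equiv(a/c)^{1/(q-2)}\,$ loses its minimality once the parameter $\,(q-2)\hh a\,$ crosses the threshold $\,\lambda_1\w$.
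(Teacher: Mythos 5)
Your proposal is correct, and it actually supplies more than the paper does: the paper never proves Lemma~\ref{ymabe}, it simply quotes it as a result of Ya\-ma\-be \cite{yamabe}, pointing also to \cite[pp.\ 115--119]{aubin} and \cite[Lemma 16.37]{besse}. Your argument -- direct minimization of the subcritical quotient $Q$ on $H^1(M)$ under the constraint $\|f\|_{L^q}=1$, compactness of $H^1(M)\hookrightarrow L^q(M)$ in dimension two, replacement of $f$ by $|f|$, identification of the Lagrange multiplier with the infimum $\mu$, rescaling to turn $\mu$ into the prescribed $c$, and the second-order expansion of $Q(1+\varepsilon\psi)$ along a first eigenfunction to rule out constant minimizers -- is the standard variational proof of this statement, and essentially the one behind the references the paper cites. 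The expansion is computed correctly: the $\varepsilon^2$-coefficient $V^{-2/q}\hh[\lambda_1-(q-2)\hh a\hs]\int_M\psi^2$ is negative precisely under the hypothesis $(q-2)\hh a>\lambda_1$, and you use that hypothesis exactly where it must enter, namely once to force $a>0$ (so the numerator is coercive and $\mu>0$) and once to show $\mu<aV^{1-2/q}$, the value of $Q$ on constants, so that no constant can attain the minimum.

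One step needs reordering. For non-integer $q$ the function $t\mapsto t^{\hh q-1}$ is only $C^1$ at $t=0$, so you cannot bootstrap all the way to $C^\infty$ \emph{before} knowing $f>0$. The correct sequence is: bootstrap first to $W^{2,p}$ for all $p<\infty$ and then to $C^{2,\alpha}$ (which the $C^1$ regularity of $t\mapsto t^{\hh q-1}$ on $[\hs0,\infty)$ permits), next apply the strong maximum principle to the supersolution inequality $-\Delta f+a\nh f=\mu f^{\hh q-1}\ge0$, with $a>0$ and $f\ge0$, $f\not\equiv0$, to conclude $f>0$ everywhere, and only then continue the bootstrap to $C^\infty$, the nonlinearity now being smooth in the range of $f$. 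This is a routine fix and does not affect the substance of your argument.
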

By (\ref{css}), Lemma~\ref{ymabe} can be applied to case (a) in
Theorem~\ref{srfbs} for compact bases $\,M\nh$. The resulting  construction of
compact warped products with harmonic curvature is a special case of one in
\cite{derdzinski-83} and \cite[Example 16.35(v)]{besse}.

Due to (\ref{css}), equation (\ref{ktr}.iii) then becomes (\ref{dfm}) for
$\,q=2+4/\p\,$ (so that $\,q>2$), while (\ref{ktr}.i) with $\,r=0\,$ reads
$\,\gz=(1\nh-\p)K$. Since the Ein\-stein constant $\,\gz$ of the fibre is
positive (Remark~\ref{parts}), $\,(M\nh,g)\,$ has in this case the negative
constant Gauss\-i\-an curvature $\,K\hs$ and, as 
$\,a=\p(\p\hh-\nh2)\gz/[4(\p\hh-\nh1)]$, the condition 
$\,(q-2)\hh a>\lambda_1\w$ needed in Lemma~\ref{ymabe} is equivalent to
$\,\p>2-\lambda_1\w/K$, cf.\ (\ref{pgt}). However, we are free to assume that
$\,\p\ge4$. (If $\,\p\in\{2,3\}$, the warp\-ed-\hn prod\-uct metric is
con\-for\-mal\-ly flat according to Remark~\ref{cnffl}.) Obviously, having
$\,\p>2-\lambda_1\w/K\hs$ for {\it all\/} $\,\p\ge4$ amounts to the inequality
$\,2>-\lambda_1\w/K$.

According to the second part of Remark~\ref{eighp},
every closed orientable surface of genus greater than $\,1\,$ admits metrics
with negative constant Gauss\-i\-an curvature $\,K\hs$ satisfying this last
inequality, which implies the existence of examples mentioned in the
italicized statement at the beginning of this section.

\section{Constant Gauss\-i\-an curvature: multiplicity\done}\label{cm}
\setcounter{equation}{0}
In equation (\ref{dfm}) we can always assume that $\,c=a$, rewriting it as
\begin{equation}\label{dfc}
\Delta\nh f\hs-\,a\nh f\hs
=\,-\hn af\hh^{q-1},
\end{equation}
since $\,f\,$ may be replaced with $\,(a/c)^{q-2}\nh f\nh$. This normalization
removes the freedom of simultaneously rescaling $\,f\,$ and $\,c$, which is of
no geometric interest.

There are various known multiplicity results for positive solutions of
(\ref{dfc}) on compact Riemannian manifolds $\,(M\nh,g)\,$ of all dimensions 
$\,\m\ge2$. Consider
\begin{equation}\label{nmg}
\begin{array}{l}
\mathrm{the\ number\ }\hh\#(M\nh,g,a,q)\hh\mathrm{\ of\ distinct\
nonconstant}\\
\mathrm{positive\ smooth\ solutions\ }\,f\,\mathrm{\ to\ (\ref{dfc})\ on\
}\,(M\nh,g),
\end{array}
\end{equation}
so that $\,0\le\#(M\nh,g,a,q)\le\infty$. Typically, a lower
bound on $\,\#(M\nh,g,a,q)\,$ is given in terms of the topology of $\,M\nh$.

One defines the {\it Lus\-ter\-nik--Schni\-rel\-mann category\/} $\,\cat(X)\,$
of a topological space $\,X\hs$ to be the least integer $\,k\ge1\,$ such that
$\,X\hs$ is the union of $\,k\,$ open contractible subsets. If no such $\,k\,$
exists, one sets $\,\cat(X)=\infty$. Let us also denote by 
$\,\mathrm{b}_i\w\nh(X,\bbK)$ the $\,i\hh$th Betti number of $\,X\hs$ with 
coefficients in any given field $\,\bbK$, and by $\,\mathrm{b}(X,\bbK)$ the
sum $\,\sum_i\w\mathrm{b}_i\w\nh(X,\bbK)\le\infty$. Thus, $\,\cat(S^\m)=2$,
while any closed surface $\,M\,$ of genus $\,\mathbf{g}\ge1\,$ has
$\,\cat(M)=3\,$ and $\,b(M\nh,\bbZ_2\w)=2(1+\mathbf{g})$.

Finally, one calls a solution $\,f\,$ of (\ref{dfc}) {\it nondegenerate\/} if
it is nondegenerate as a critical point of the associated energy functional
or, equivalently, if the linearized equation 
$\,\Delta\hn\psi-a\psi=a(1-q)f\hh^{q-2}\psi\,$ holds only for the trivial
solution $\,\psi=0$.
\begin{theorem}\label{mrthn}Given a compact Riemannian manifold\/ 
$\,(M\nh,g)\,$ of dimension\/ $\,\m\ge2$, any sufficienly large\/ 
$\,a\in(0,\infty)$, and any\/ $\,q\in(2,2\m/(\m-2))$, with\/ 
$\,2\m/(\m-2)=\infty\,$ if\/ $\,\m=2$, one has\/ $\,\#(M\nh,g,a,q)>\cat(M)$,
in the notation of\/ {\rm(\ref{nmg})}. For any field\/ $\,\bbK\,$ and any
sufficienly large\/ $\,a\,$ such that all nonconstant smooth solutions of\/
{\rm(\ref{dfc})} are nondegenerate,
$\,\#(M\nh,g,a,q)>2\hh\mathrm{b}(M\nh,\bbK)-2$.
\end{theorem}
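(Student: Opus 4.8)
The plan is to regard (\ref{dfc}) as a singularly perturbed problem and to extract the number of solutions from the topology of $M$ by the photography method together with Morse theory. Setting $\epsilon=a^{-1/2}$ (so that $\epsilon\to0$ as $a\to\infty$; this $\epsilon$ is unrelated to the Einstein constant $\gz$) turns (\ref{dfc}) into $-\epsilon^2\Delta f+f=f^{q-1}$, the classical concentration equation. First I would pass to the Nehari manifold $\mathcal{N}_\epsilon\subseteq H^1(M)$ of the energy $E_\epsilon(f)=\frac12\int_M(\epsilon^2|\nabla f|^2+f^2)-\frac1q\int_M(f^+)^q$, whose nonconstant critical points are, by the maximum principle and elliptic regularity, exactly the nonconstant positive smooth solutions counted in (\ref{nmg}); note $f\equiv1$ is the only positive constant solution. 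Subcriticality $q<2\m/(\m-2)$ gives the compact embedding $H^1\hookrightarrow L^q$ and hence the Palais--Smale condition, so both Lusternik--Schnirelmann and Morse theory apply on the complete manifold $\mathcal{N}_\epsilon$, which is moreover contractible (radially homeomorphic to the unit sphere of $H^1(M)$).

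The analytic heart is the uniform concentration analysis built on the limit problem $-\Delta U+U=U^{q-1}$ on $\bbR^\m$, whose positive finite-energy ground state is unique up to translation, radial and nondegenerate (Kwong), with energy $I_\infty$; a single bubble $U((\,\cdot\,-\xi)/\epsilon)$ then has $E_\epsilon\approx\epsilon^\m I_\infty$. I would prove that, for a threshold $d_\epsilon=(I_\infty+o(1))\epsilon^\m$, every $f$ in the sublevel $\mathcal{N}_\epsilon^{d_\epsilon}=\{f\in\mathcal{N}_\epsilon:E_\epsilon(f)\le d_\epsilon\}$ is $H^1$-close to a single rescaled bubble centred near a point of $M$. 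This lets me build a bubble-placing map $\Phi_\epsilon:M\to\mathcal{N}_\epsilon^{d_\epsilon}$ and a centre-of-mass map $\beta:\mathcal{N}_\epsilon^{d_\epsilon}\to M$ (after an isometric embedding $M\hookrightarrow\bbR^N$ and the nearest-point retraction of a tubular neighbourhood) with $\beta\circ\Phi_\epsilon\simeq\mathrm{id}_M$ for small $\epsilon$. As in Benci--Cerami this forces $\cat(\mathcal{N}_\epsilon^{d_\epsilon})\ge\cat(M)$, so $E_\epsilon$ has at least $\cat(M)$ critical points below $d_\epsilon$, all nonconstant since $f\equiv1$ has energy of order $1\gg\epsilon^\m$. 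The strict inequality $\,\#>\cat(M)\,$ comes from one further nonconstant solution above $d_\epsilon$: since $\mathcal{N}_\epsilon$ is contractible while $\mathcal{N}_\epsilon^{d_\epsilon}$ carries the nontrivial topology of $M$, a relative linking minimax over the pair $(\mathcal{N}_\epsilon,\mathcal{N}_\epsilon^{d_\epsilon})$ produces a critical level in $(d_\epsilon,\,2I_\infty\epsilon^\m+o(\epsilon^\m))$, far below the constant's level, whose solution is a genuine two-bubble one.

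For the sharp Morse estimate I would upgrade the photography to the homological statement that, for small $\epsilon$, $\beta_*\Phi_*=\mathrm{id}$ makes $\Phi_*:H_*(M;\bbK)\hookrightarrow H_*(\mathcal{N}_\epsilon^{d_\epsilon};\bbK)$ injective, and that $\mathcal{N}_\epsilon^{d_\epsilon}$ is connected. Assuming every nonconstant solution nondegenerate, the weak Morse inequalities in $\mathcal{N}_\epsilon^{d_\epsilon}$ give, in each degree $k\le\m$, at least $\mathrm{b}_k(M,\bbK)$ critical points, all nonconstant single bubbles, for a total of at least $\mathrm{b}(M,\bbK)$. For the complement I would use the long exact sequence of the pair: contractibility of $\mathcal{N}_\epsilon$ gives $H_k(\mathcal{N}_\epsilon,\mathcal{N}_\epsilon^{d_\epsilon};\bbK)\cong\tilde H_{k-1}(\mathcal{N}_\epsilon^{d_\epsilon};\bbK)$, which in the window $2\le k\le\m+1$ contains the injected copy of $\tilde H_{k-1}(M;\bbK)$. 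The decisive point is that in this bounded degree window the constant solution cannot contribute: its Hessian on the Nehari tangent space is $-\epsilon^2\Delta+(2-q)$, of Morse index $\sim\#\{\mu_j<(q-2)a\}\sim a^{\m/2}\to\infty$ by Weyl's law, whereas the relative homology sits in degrees $\le\m+1$. Hence the weak Morse inequalities there are realized by nonconstant solutions, giving at least $\sum_{j=1}^{\m}\mathrm{b}_j(M,\bbK)=\mathrm{b}(M,\bbK)-1$ of them above $d_\epsilon$; combined with the previous $\mathrm{b}(M,\bbK)$ the total is at least $2\mathrm{b}(M,\bbK)-1>2\mathrm{b}(M,\bbK)-2$.

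The hard part is the uniform, $\epsilon$-independent concentration-compactness underlying everything: proving that low-energy Nehari functions are single bubbles, that $\Phi_\epsilon$ and $\beta$ are well defined into the right sublevel, and that $\beta\circ\Phi_\epsilon\simeq\mathrm{id}_M$ with $\Phi_*$ injective on $\bbK$-homology — the Morse count needs this homological input, not merely the category estimate. A secondary but essential point is the Weyl-law divergence of the constant solution's Morse index, which decouples it from the low-degree relative homology and is precisely what makes the inequality strict; one also checks that the constant is nondegenerate for the chosen $a$ (it degenerates only when $(q-2)a$ is an eigenvalue of $-\Delta$, avoidable by perturbing $a$), so that global Morse theory applies without modification.
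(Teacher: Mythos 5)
Your proposal is correct and is essentially the paper's own proof: the paper establishes Theorem~\ref{mrthn} simply by citing \cite{benci-bonanno-micheletti} and \cite[Theorem~1.2]{petean}, and what you reconstruct -- the rescaling $\epsilon=a^{-1/2}$, the Nehari-manifold photography method (bubble map and barycenter map) giving $\cat(M)$ low-energy solutions plus one extra solution by linking against the contractible Nehari manifold, and the Morse-theoretic count of $2\hh\mathrm{b}(M,\bbK)-1$ solutions using the fact that the constant solution's Morse index diverges by Weyl's law -- is precisely the argument of those cited papers. Your observation that $m=2$ causes no difficulty because every $q<\infty$ is subcritical matches the paper's own remark that the proof of \cite{benci-bonanno-micheletti} extends to dimension two via the Sobolev embedding theorem.
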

\begin{proof}
This is the central result of \cite{benci-bonanno-micheletti}, where it is
stated (for reasons not clear to us) only for $\,m\ge3$. However, the proof
remains completely valid also in the case $\,m=2$, due to the So\-bo\-lev
embedding theorem. The same result, with exactly the same proof, also appears
in \cite[Theorem~1.2]{petean}, with no restriction on $\,m\ge2$.
\end{proof}
In the case of hyperbolic surfaces $\,M\nh$, Theorem~\ref{mrthn} with
$\,\cat(M)=3\,$ yields 
\begin{corollary}\label{crlry}On any closed orientable surface of genus
greater than\/ $\,1$, endowed with a metric of negative constant Gauss\-i\-an
curvature, equation\/ {\rm(\ref{dfc})} has at least four distinct nonconstant
positive smooth solutions\/ $\,f\nh$.
\end{corollary}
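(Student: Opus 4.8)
The plan is to obtain the Corollary as an immediate specialization of Theorem~\ref{mrthn}, once its hypotheses are matched to the hyperbolic-surface setting. First I would assemble the three ingredients the theorem requires. A closed orientable surface $\,M\,$ of genus $\,\mathbf{g}>1\,$ has $\,\cat(M)=3$, as recorded in the lines preceding Theorem~\ref{mrthn}. Its dimension is $\,\m=2$, for which the admissible exponent range $\,q\in(2,2\m/(\m-2))\,$ degenerates to $\,q\in(2,\infty)$; thus every $\,q>2$, and in particular the value $\,q=2+4/\p\,$ with $\,\p\ge2\,$ arising from (\ref{ktr}.iii), is permitted. Finally, a metric of constant negative Gauss\-i\-an curvature makes $\,(M\nh,g)\,$ a compact Riemannian manifold of dimension $\,\m\ge2$, so the theorem applies with no further conditions.

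Feeding these into the first (Lus\-ter\-nik--Schni\-rel\-mann) bound of Theorem~\ref{mrthn} gives, for every sufficiently large $\,a\in(0,\infty)$, the strict inequality $\,\#(M\nh,g,a,q)>\cat(M)=3$. Since $\,\#(M\nh,g,a,q)\,$ is a nonnegative integer counting exactly the distinct nonconstant positive smooth solutions of (\ref{dfc}), this reads $\,\#(M\nh,g,a,q)\ge 4$, which is the content of the Corollary, at least for $\,a\,$ in the ``sufficiently large'' range.

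The single point requiring attention is therefore the parameter $\,a$, and handling it is the only real step. To see that the largeness assumption is harmless I would exploit the scale invariance of (\ref{dfc}): because $\,\Delta_{t^2g}=t^{-2}\Delta_g\,$ for $\,t>0$, a function solves (\ref{dfc}) for the metric $\,t^2g\,$ and parameter $\,a\,$ precisely when it solves (\ref{dfc}) for $\,g\,$ and parameter $\,t^2a$, whence $\,\#(M\nh,t^2g,a,q)=\#(M\nh,g,t^2a,q)$. As $\,t^2g\,$ is again a metric of constant negative Gauss\-i\-an curvature (equal to $\,K/t^2$), letting $\,t\,$ be large drives the effective parameter into the range where Theorem~\ref{mrthn} delivers the bound; equivalently, replacing a prescribed hyperbolic metric by a homothetic copy trades the ``large $\,a$'' hypothesis for a rescaling that leaves the solution count unchanged. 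This is exactly the reconciliation needed to phrase the conclusion for the geometrically relevant data.

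I do not anticipate a genuine obstacle here: the statement is a corollary, and the only bookkeeping is the role of $\,a\,$ just described. The one thing I would double-check is that the $\,\m=2\,$ case of Theorem~\ref{mrthn} is legitimate, which it is: the So\-bo\-lev embedding theorem supplies in dimension two the compactness that the subcritical condition $\,q<2\m/(\m-2)\,$ guarantees when $\,\m\ge3$, so the proof cited for Theorem~\ref{mrthn} goes through unchanged.
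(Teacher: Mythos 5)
Your first two paragraphs reproduce the paper's proof exactly: Theorem~\ref{mrthn} applied with $\m=2$ (so the admissible exponents are all of $(2,\infty)$) and $\cat(M)=3$ for a closed orientable surface of genus greater than $1$ gives $\#(M,g,a,q)>3$, i.e.\ at least four distinct nonconstant positive smooth solutions, for every sufficiently large $a$; the corollary is to be read, as the paper reads it, with that quantifier on $a$ inherited from the theorem.

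Your third paragraph, however, claims more than it can deliver. The scaling identity $\#(M,t^2g,a,q)=\#(M,g,t^2a,q)$ is correct, but it cannot make the largeness hypothesis on $a$ ``harmless'' for a \emph{prescribed} metric, because the threshold implicit in ``sufficiently large'' scales along with the metric. Writing the given $g$ as $t^2g'$ with $g'=g/t^2$, the effective parameter becomes $t^2a$, but it must now exceed the threshold attached to $g'$, which is $t^2$ times the threshold attached to $g$; the condition on $a$ is therefore unchanged, and the argument is circular. What the homothety really shows is only that, for each fixed $a>0$, \emph{some} homothetic copy of the hyperbolic metric satisfies the conclusion --- not that the given metric does for every $a$ (and the latter statement is in fact false: for small $a$ the constant is the only positive solution). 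This does not affect the corollary itself, which needs no such reconciliation, but the claim that rescaling removes the role of $a$ should be deleted.
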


\bibliographystyle{amsplain}

\end{document}